\theoremstyle{plain}
\newcommand{\C}{\mathbb{C}}
\newcommand{\T}{\mathbb{T}}
\newcommand{\D}{\mathbb{D}}
\numberwithin{equation}{section}
\theoremstyle{plain}
\newtheorem{Proposition}[equation]{Proposition}
\newtheorem{Corollary}[equation]{Corollary}
\newtheorem*{Corollary*}{Corollary}
\newtheorem{Theorem}[equation]{Theorem}
\newtheorem*{Theorem*}{Theorem}
\newtheorem{Lemma}[equation]{Lemma}
\theoremstyle{definition}
\newtheorem{Example}[equation]{Example}
\newtheorem{Remark}[equation]{Remark}
\renewcommand{\leq}{\leqslant}
\renewcommand{\geq}{\geqslant}
\renewcommand{\subset}{\subseteq}
\renewcommand{\supset}{\supseteq}
\renewcommand{\vec}[1]{{\bf #1}}
\begin{document}

\title{Inner functions in reproducing kernel spaces}
\author[Cheng]{Raymond Cheng}
\address{Department of Mathematics and Statistics,
  Old Dominion University,
  Norfolk, VA 23529}
  \email{rcheng@odu.edu}

\author[Mashreghi]{Javad Mashreghi}
\address{D\'epartement de math\'ematiques et de statistique, Universit\'e laval, Qu\'ebec, QC, Canada, G1V 0A6}
\email{javad.mashreghi@mat.ulaval.ca}

\author[Ross]{William T. Ross}
	\address{Department of Mathematics and Computer Science, University of Richmond, Richmond, VA 23173, USA}
	\email{wross@richmond.edu}

%%%%%%%%%%%%%%%%%%%%%%%%%%%%%%%%%%%%%%%%%%%%%%%%%%%%%%%%%%%%%%%%%%%%%%%%
%%%%%%%%%%%%%%%%%%%%%%%%%%%%%%%%%%%%%%%%%%%%%%%%%%%%%%%%%%%%%%%%%%%%%%%%
%%%%%%%%%%%%%%%%%%%%%%%%%%%%%%%%%%%%%%%%%%%%%%%%%%%%%%%%%%%%%%%%%%%%%%%%

\begin{abstract}
In BeurlingÕs approach to inner functions  for the shift operator $S$ on the Hardy space $H^2$, a function $f$ is inner when $f \perp S^n f$ for all $n \geq 1$.  Inspired by this approach, this paper develops a notion of an inner vector $\mathbf{x}$ for any operator $T$ on a Hilbert space, via the analogous condition $\mathbf{x} \perp T^n \mathbf{x}$ for all $n \geq 1$. We study these inner vectors in a variety of settings. Using Birkhoff-James orthogonality, we extend this notion of inner vector for an operator on a Banach space. We then apply this development of inner function to recast a theorem of Shapiro and Shields to discuss the zero sets for functions in Hilbert spaces, as well as obtain a corresponding result for zero sets for a wide class of Banach spaces. 
\end{abstract}

\dedicatory{In memory of S.\ Shimorin.}

\subjclass[2010]{Primary: , Secondary: }

\keywords{BJ-orthogonality, inner functions}

\thanks{This work was supported by NSERC (Canada).}
\maketitle

%%%%%%%%%%%%%%%%%%%%%%%%%%%%%%%%%%%%%%%%%%%%%%%%%%%%%%%%%%%%%%%%%%%%%%%%
%%%%%%%%%%%%%%%%%%%%%%%%%%%%%%%%%%%%%%%%%%%%%%%%%%%%%%%%%%%%%%%%%%%%%%%%
%%%%%%%%%%%%%%%%%%%%%%%%%%%%%%%%%%%%%%%%%%%%%%%%%%%%%%%%%%%%%%%%%%%%%%%%

\section{Introduction} 
Inspired by Beurling's analysis  of the structure of the shift invariant subspaces of the classical Hardy space $H^2$ \cite{Beurling, Dur}, and by similar analysis in other settings \cite{MR1440934, Dima, MR936999, MR1259923, MR1145733, MR0145082},  we explored a notion of ``inner function'' in the sequence space $\ell^{p}_{A}$ and used it to characterize its zero sets \cite{ zeroslp, MR3686895}.  As this ``Beurling approach'' seems to be ubiquitous, we will survey a method from \cite{MR0145082} to the setting of reproducing kernel Hilbert spaces of analytic functions, as we head towards an analogous result for Banach spaces of analytic functions.

Broadly speaking, we start with a Banach space $\mathscr{X}$ of analytic functions on a bounded planar domain $\Omega$ for which, among some mild technical conditions (see Section \ref{s3}),  the shift operator $(S_{\mathscr{X}}  f)(z)  = z f(z)$ is well defined and continuous. We will examine a notion of ``orthogonality'' $f \perp_{\mathscr{X}} g$ for $f, g \in \mathscr{X}$ due to Birkhoff and James \cite{Jam} (see Section \ref{section7}) and use this orthogonality to define an {\em $S_{\mathscr{X}}$-inner function} to be an  $f \in \mathscr{X}\setminus\{0\}$ for which 
$$f \perp_{\mathscr{X}} S_{\mathscr{X}}^n f, \quad n \geq 1.$$ When $\Omega$ is the open unit disk $\D$ and $\mathscr{X}$ is the classical Hardy space $H^2$, basic Fourier analysis will show that an $S_{H^2}$-inner function is a bounded analytic function on $\D$ for which the radial boundary function has constant modulus almost everywhere, in agreement with the classical and well-known notion of inner. Similarly defined inner functions were explored in other spaces \cite{MR1440934, Dima, zeroslp, Seco}.  As a topic to be explored in future work, a more general notion of $T$-inner vector will be presented in this paper, in which $T$ is a bounded linear transformation on a Banach space $\mathscr{X}$, and a vector $\vec{x} \in \mathscr{X}$ is said to be $T$-inner if $\vec{x} \perp_{\mathscr{X}} T^n \vec{x}$ for all $n \geq 1$.

This abstract notion of ``inner'' arises naturally in prediction theory for norm-stationary processes.  We say that a nonzero sequence $\{X_k\}_{k \in \mathbb{Z}}$ in a Banach space $\mathscr{X}$ is {\em norm stationary} when
\begin{equation}\label{brtr3}
\Big\|\sum_{j = 1}^{m} a_j X_{k_j}\Big\| = \Big\|\sum_{j = 1}^{m} a_j X_{k_j + t}\Big\|
\end{equation}
for all $t \in \mathbb{Z}$, coefficients $a_j \in \C$, and indices $k_j \in \mathbb{N}$.  The identity in \eqref{brtr3} induces an isometry $T$ on
$$\mathscr{M} := \bigvee\{X_0, X_1, X_2, \ldots\},$$
 the closed linear span of the sequence $\{X_k\}_{k \geq0}$, for which
\[
T X_k = X_{k+1}, \quad k \geq 0.
\]
Writing $\widehat{X}_0$ for a metric projection (nearest point) of $X_0$ onto $T\mathscr{M}$, one can show that the vector $X_0 - \widehat{X}_0$ is $T$-inner on $\mathscr{M}$.
This construction appears in studies involving norm-stationary processes with infinite variance
\cite{CR,CR2,MP}, extending, in part, the extensive literature on stationary Gaussian processes.  In particular, the results from \cite{MP} seek to find a Wold-like decomposition in this setting.

This paper is structured as follows. In Section \ref{s2} we discuss a general notion of a $T$-inner vector, where $T$ is a bounded linear transformation on a Hilbert space, and give a variety of examples, and encourage the reader to investigate further. In Section \ref{EP}  we develop some basic properties of $T$-inner vectors and show in Proposition \ref{66zczxc} that all $T$-inner vectors take a particular form.

In Section \ref{s3} we apply this notion of $T$-inner to recast some work of Shapiro and Shields \cite{MR0145082} (in which the concept of inner also has its roots in the work of Beurling),  in terms $S_{\mathscr{H}}$-inner functions, to characterize the zero sets of a Hilbert space of analytic functions on a bounded planar domain (see Theorem \ref{v98u24tprefewq23re}). This will lead us in several directions. First, we explore whether the $S_{\mathscr{H}}$-inner function associated with a polynomial has extra zeros.  Indeed, with the Hardy space $H^2$, the inner factor of a function in $H^2$ has exactly all of the zeros of the original function, and no others. In Section \ref{mzkqwwowowo} we develop conditions (see Theorem \ref{extra1}) for which the $S_{\mathscr{H}}$-inner function $J$ associated with an $f \in \mathscr{H}$ (where $\mathscr{H}$ is a Hilbert space of analytic functions on a bounded planar domain) has only the zeros of $f$, and no others. In particular, our result applies to the shift operator on the well-known Dirichlet space (see Corollary \ref{77w9e8r7ew98}) as well as shift operator on a space studied by Korenblum (see Corollary \ref{xhbmdfer4terweqwa}). 

Second, we investigate the connection between inner functions and zero sets.  In particular, we encounter the phenomenon of an $S_{\mathscr{H}}$-inner function $J$ having ``extra zeros,'' that is, zeros in addition to a prescribed set.  The existence of such extra zeros was first demonstrated in \cite{MR1284108}, where $\mathscr{H}$ was a weighted Bergman space. In Section \ref{ExtraZ} we give a large class of spaces $\mathscr{H}$ for which the $S_{\mathscr{H}}$-inner function associated with a linear polynomial has extra zeros. 

Third, so far, we have focused on Hilbert spaces. In our final two sections we develop, via Birkhoff-James orthogonality, notions of ``inner'' for operators on Banach spaces.  Our concept of inner will coincide with the classical definition for the Hardy classes $H^p$, when $p \in (1, \infty)$.  In addition, we discuss the zero sets for Banach spaces of analytic functions on a planar domain, and prove an extension of the Shapiro-Shields result.

%%%%%%%%%%%%%%%%%%%%%%%%%%%%%%%%%%%%%%%%%%%%%%%%%%%%%%%%%%%%%%%%%%%%%%%%
%%%%%%%%%%%%%%%%%%%%%%%%%%%%%%%%%%%%%%%%%%%%%%%%%%%%%%%%%%%%%%%%%%%%%%%%
%%%%%%%%%%%%%%%%%%%%%%%%%%%%%%%%%%%%%%%%%%%%%%%%%%%%%%%%%%%%%%%%%%%%%%%%

\section{Inner vectors in Hilbert spaces}\label{s2}

Let us begin with a discussion of $T$-inner vectors for Hilbert space operators $T$, where one can take a very broad approach. We will see later in the Banach space setting that some restrictions become necessary in order for the definitions to make sense. 

Let $\mathscr{H}$ be a complex Hilbert space with inner product $\langle\cdot,\cdot\rangle$, and let $T$ be a bounded linear operator on $\mathscr{H}$. We say a vector $\vec{v} \in \mathscr{H} \setminus \{0\}$ is {\em $T$-inner} when
$$\vec{v} \perp T^n \vec{v}, \quad n \geq 1.$$ For a vector $\vec{w} \in \mathscr{H}$, let 
\begin{equation}\label{3487ytrgeifodjsvca}
[\vec{w}]_{T} := \bigvee \{\vec{w}, T \vec{w}, T^2 \vec{w}, \ldots\}
\end{equation}
denote the $T$-invariant subspaces generated by $\vec{w}$. When the context is clear we will use $[\vec{w}]$ in place of  $[\vec{w}]_{T}$. Observe that $\vec{v}$ is $T$-inner precisely when 
$\vec{v} \perp [T \vec{v}]_{T}$. Here are a few examples of $T$-inner vectors. 

\begin{Example}\label{nnnnnc}
Suppose that $T$ is the shift operator $(T f)(z) = z f(z)$ on the classical Hardy space $H^2$ \cite{Dur}. Via standard theory of radial boundary values, the inner product on $H^2$ can be written as the integral 
\begin{equation}\label{lkjlkjdlkfj}
\langle f, g \rangle = \int_{0}^{2 \pi} f(e^{i \theta}) \overline{g(e^{i \theta})} \frac{d \theta}{2 \pi}, \quad f, g \in H^2.
\end{equation}
Thus a function (vector) $f \in H^2 \setminus \{0\}$ is $T$-inner precisely when
$$0 = \langle f, T^n f\rangle = \int_{0}^{2 \pi} |f(e^{i \theta})|^2 e^{-in \theta} \frac{d \theta}{2 \pi}, \quad n \geq 1.$$
The equation above, along with its complex conjugate, show $f$ is $T$-inner precisely when all but the zeroth Fourier coefficients of $|f|^2$ vanish. This implies that the function $\theta \mapsto |f(e^{i \theta})|$ is constant almost everywhere. 
The condition ``$|f|$ has constant radial limit values almost everywhere on the unit circle'', is the classical definition of inner \cite{Dur} -- though one usually normalizes things so that inner means $|f(e^{i \theta})| = 1$ for almost every $\theta$. We will refer to this notion of inner as {\em classical inner}.
\end{Example}

\begin{Example}\label{nnnnxx}
Suppose that $(T f)(z) = z^2 f(z)$, the {\em square} of the unilateral shift on $H^2$. Then, with a similar analysis as in the previous example,  $f \in H^2$ is $T$-inner when 
$$\int_{0}^{2 \pi} |f(e^{i \theta})|^2 e^{2 i k \theta} \frac{d \theta}{2 \pi} = 0, \quad k \in \mathbb{Z} \setminus \{0\},$$ 
though it is somewhat unclear what to glean from this condition. Certainly any classical inner function is a $T$-inner function. 
However, functions like $f(z) = a + b z$, which are not classical inner when $a$ and $b$ are both nonzero, is a $T$-inner function. Observe that this class of $T$-inner functions is closed under multiplication by classical inner functions.

With a little extra effort, and transferring the problem to a different venue, we can describe the $T$-inner functions more explicitly. 
Indeed, if
$$H^2 \oplus H^2 := \{f \oplus g: f, g \in H^2\}$$ with norm 
$$\|f \oplus g\|_{H^2 \oplus H^2}^{2} := \int_{0}^{2 \pi} |f(e^{i \theta})|^2 \frac{d \theta}{2 \pi} + \int_{0}^{2 \pi} |g(e^{i \theta})|^2 \frac{d \theta}{2 \pi},$$
then  the operator 
$$U: H^2 \to H^2 \oplus H^2,$$ defined by 
\begin{equation}\label{98rguioew}
U\Big(\sum_{n = 0}^{\infty} a_n z^n\Big) = \Big(\sum_{n = 0}^{\infty} a_{2 n} z^n, \sum_{n = 0}^{\infty} a_{2 n + 1} z^n\Big)
\end{equation} is unitary.  
 Furthermore, if $(S f)(z) = z f(z)$ is the shift on $H^2$, we have 
$$S \oplus S: H^2 \oplus H^2 \to H^2 \oplus H^2, \quad (S \oplus S)(f \oplus g) = (S f) \oplus (S g),$$
and one can show that $U S^2 = (S \oplus S) U$.
Thus $f \in H^2$ is $S^2$-inner, if and only if $U f \in H^2 \oplus H^2$ is $S \oplus S$-inner. If 
$U f = f_1 \oplus f_2$ as in \eqref{98rguioew}, then $f$ is $S^2$-inner when
\begin{align*}
0 & = \langle (f_1 \oplus f_2, (S \oplus S)^n (f_1 \oplus f_2)\rangle_{H^2 \oplus H^2}\\
& = \langle f_1 \oplus f_2, (S^n f_1) \oplus (S^n f_2)\rangle_{H^2 \oplus H^2}\\
& = \int_{0}^{2 \pi} |f_1(e^{i \theta})|^2 e^{-i n \theta} \frac{d \theta}{2 \pi} + \int_{0}^{2 \pi} |f_2(e^{i \theta})|^2 e^{-i n \theta} \frac{d \theta}{2 \pi}.
\end{align*}
The above equation, along with its complex conjugate,  shows that $|f_1|^2 + |f_2|^2$ is (almost everywhere) constant on the circle.  We leave it to the reader to show that $U^{-1} (f_1 \oplus f_2)$ is equal to $f_1(z^2) + z f_2(z^2)$ and thus $f \in H^2$ is $S^{2}$-inner if and only if 
$$f(z) = f_1(z^2) + z f_{2}(z^2),$$ where $f_1, f_2 \in H^2$ with $|f_1|^2 + |f_2|^2$ is constant almost everywhere on $\T$. 

This example only scratches the surface of a much wider (and deeper) theory of shifts of higher multiplicity and the well-developed Beurling-Lax theorem \cite{Hoffman}.
\end{Example}

\begin{Example}\label{oiuweoiruew}

The previous example can be extended even further to $T = T_{\phi}$, $\phi \in H^{\infty}$, is an analytic Toeplitz operator on $H^2$ with symbol $\phi$, i.e., $T_{\phi} f = \phi f$. Here $f \in H^2 \setminus \{0\}$ is $T_{\phi}$-inner when 
$$\int_{0}^{2 \pi} |f(e^{i \theta})|^2 \overline{\phi(e^{i \theta})}^{n} \frac{d \theta}{2 \pi} = 0, \quad n \geq 1.$$
Of course, when $\phi(0) = 0$, then any (classical) inner function is $T_{\phi}$ inner,
and this class is also closed under multiplication by classical inner functions. In general, what are the $T_{\phi}$-inner functions? 

Let us work out a particular example. Suppose that $\phi$ is a Riemann map from $\mathbb{D}$ onto a simply connected domain $G$ with smooth boundary $\Gamma$. Then, with $ds_{\mathbb{T}}$ denoting arc length measure on $\mathbb{T}$, $ds_{\Gamma}$ denoting arc length measure on $\Gamma$, and $\psi = \phi^{-1}$, we see, via a change of variables,  that a unit vector $f \in H^2$ is $T_{\phi}$-inner when 
\begin{align*}
0 & = \int_{\mathbb{T}} |f(\zeta)|^2 \overline{\phi(\zeta)}^{n} ds_{\mathbb{T}}(\zeta)\\
& = \int_{\Gamma} |f(\psi(w)|^2 |\psi'(w)| \overline{w}^{n} ds_{\Gamma}(w), \quad n \geq 1.
\end{align*}
Using the (harmless) assumption that $f$ is a unit vector, we see that 
$$\int_{\Gamma} (|f(\psi(w)|^2 |\psi'(w)| - 1) \overline{w}^{n} ds_{\Gamma}(w) = 0, \quad n \geq 0.$$ Taking the complex conjugate of the above expression we see the measure 
$$(|f \circ \psi|^2 |\psi'|  - 1)ds_{\Gamma}$$ 
annihilates $w^{n}$ and $\overline{w}^{n}$ for all $n \geq 0$. Standard harmonic analysis will show that this measure must be the zero measure and so 
$$|f \circ \psi|^2 |\psi'|  = 1$$ almost everywhere on $\Gamma$. Consequently, we see that 
$$|f|^2 |\psi' \circ \phi| = 1$$ almost everywhere on $\mathbb{T}$. But since 
$$\psi'\circ \phi = \frac{1}{\phi'}$$ we see that $f/\sqrt{\phi'}$ is a classical inner function. In summary, $f$ is $T_{\phi}$-inner if and only if $f/\sqrt{\phi'}$ is a classical inner function.  We thank Dima Khavinson for pointing this out to us.

For a particularly simple example, consider the case where 
$$\phi(z) = \frac{z - w}{1 - \overline{w} z}, \quad w \in \mathbb{D}.$$
Here $\phi$ is a simple Blaschke factor (which is an automorphism of $\mathbb{D}$). Since 
$$\phi'(z) = \frac{1 - |w|^2}{(1 - \overline{w} z)^2},$$ the $T_{\phi}$ inner functions in this case take the form 
$$C \frac{j(z)}{1 - \overline{w} z},$$ where $C \in \mathbb{C}$ and $j$ is is a classical inner function.

%%%%%%%%%%   Old Example 2.6 here
%The previous example can be extended even further to where $T = T_{\phi}$, $\phi \in H^{\infty}$, an analytic Toeplitz operator on $H^2$ with %symbol $\phi$, i.e., $T_{\phi} f = \phi f$. Here $f \in H^2 \setminus \{0\}$ is $T_{\phi}$-inner when 
%$$\int_{0}^{2 \pi} |f(e^{i \theta})|^2 \overline{\phi(e^{i \theta})}^{n} \frac{d \theta}{2 \pi} = 0, \quad n \geq 1.$$
%Of course, when $\phi(0) = 0$, then any (classical) inner function is $T_{\phi}$ inner. In general, what are the $T_{\phi}$-inner functions? 
%
%Let's work out the particular example when
%$$\phi(z) = \frac{z - w}{1 - \overline{w} z},$$ i.e.,
%$$
%     T_{\phi} f =  \frac{z-w}{1-\overline{w}z}  f
%$$
%Note that $\phi$ is a single Blaschke factor with zero at $w$ and thus is an automorphism of the disk and maps $\mathbb{T}$ to $\mathbb{T}$ . The change of variables formula, along with the inner product \eqref{lkjlkjdlkfj}, will show that the operator 
%$$U: H^2 \to H^2, \quad U f = (f \circ \phi) \sqrt{\phi'}$$ is unitary and 
%$$U T_z = T_{\phi} U_{\phi}.$$
%Thus $f$ is $T_{\phi}$-inner if and only if $U^{-1} f$ is $T_{z}$-inner (i.e., classical inner). We leave it to the reader to work out the details and %conclude that $f$ is $T_{\phi}$-inner if and only if 
%$$f = C (1 - \overline{w} z) J ,$$
%where $C$ is a constant and $J$ is (classical) inner. 

\end{Example}

\begin{Example}
If $(T f)(x) = x f(x)$  on $L^2[0, 1]$, it is an easy exercise to show that there are no (non-zero) $T$-inner vectors. Indeed, if 
$$\langle f, x^n f\rangle = \int_{0}^{1} x^n |f(x)|^2 dx = 0, \quad n \geq 1,$$
then all the polynomials annihilate the measure $x |f(x)|^2 dx$ and an argument using the Weierstrass approximation and the Riesz representation theorems will show that $f = 0$ (almost everywhere). 
\end{Example}

\begin{Example} 
Let 
$$(T f)(x) = \int_{0}^{x} f(t) dt,$$
be the {\it Volterra operator} on $L^2[0, 1]$.  Let us establish that there are no non-zero $T$-inner vectors. 
 By a well-known result  \cite{MR0358396}, every invariant subspace of the Volterra operator takes the form $\chi_{[a, 1]} L^2[0, 1]$ for some $a \in [0, 1]$. Thus 
 $$[T f] = \chi_{[a, 1]} L^2[0, 1]$$ for some $a \in [0, 1]$. By the Lebesgue differentiation theorem, 
$f = \frac{d}{d x} T f$ almost everywhere and so $f \in \chi_{[a, 1]} L^2[0, 1]$. In other words, 
$f \in [T f]$, and since $f$ is $T$-inner, we have $f \perp f$.
This forces $f=0$,
and so there are no $T$-inner functions.
\end{Example}

\begin{Example}\label{ooivvcdvw1} 
Let $T$ denote the {\it compressed shift} $T f = P_{\Theta} (z f)$ on the model space $(\Theta H^2)^{\perp}$, where $\Theta$ is a classical inner function as in Example \ref{nnnnnc}. These compressed shifts have been well studied and  serve as models for certain types of contractions on Hilbert spaces \cite[Ch,~9]{MR3526203}. Here an $f \in (\Theta H^2)^{\perp}$ is $T$-inner when 
\begin{align*}
0 &= \langle f, T^n f\rangle\\
& = \langle f, P_{\Theta}(z^n f)\rangle\\
& = \langle P_{\Theta} f, z^n f\rangle\\
& = \langle f, z^n f\rangle\\
& =  \int_{0}^{2 \pi} |f(e^{i \theta})|^2 e^{-in \theta} \frac{d \theta}{2 \pi}, \quad n \geq 1.
\end{align*}
As in Example \ref{nnnnnc}, this says that $f$ must have constant modulus on the unit circle and thus be a classical inner function. However, $f$  must also belong to the model space $(\Theta H^2)^{\perp}$. This extra condition places a restriction on $\Theta$, namely $\Theta(0) = 0$, and on $f$, namely $f$ must be an inner divisor of $\Theta/z$ \cite[p.~177]{MR3526203}. 
\end{Example}

\begin{Example}
Continuing with Example \ref{ooivvcdvw1}, one can consider the special case where $\Theta(z) = z^n,$ $n\geq 1$. Here the model space takes the form 
$$(z^n H^2)^{\perp} = \bigvee\{1, z, z^2, \ldots, z^{n - 1}\}$$ and the matrix representation of the compressed shift $T f = P_{\Theta} (z f)$ with respect to the orthonormal basis $\{1, z, z^2, \ldots, z^{n - 1}\}$ for $(z^n H^2)^{\perp}$ becomes 
$$ \left(
\begin{array}{ccccc}
 0 &  &  &  & \\
 1 & 0 &  &  & \\
  & 1 & \ddots &  & \\
  &  & \ddots & 0 & \\
   &  &  & 1 & 0\\
\end{array}
\right)$$
(see \cite{MR3526203}).
The powers of the above matrix just move the $1$s on the sub-diagonal to the succeeding sub-diagonals (until the matrix becomes the zero matrix) and from here one can see that the $T$-inner vectors are $\vec{v} = c\, \vec{e}_j$, for $j = 0, 1, \ldots, n - 2$, where $\vec{e}_{j}$ is the standard basis vector. Notice how this corresponds to the $T$-inner vectors 

\[
     f(z) = cz^k,\quad k=0, 1,\ldots, n-1.
\]
from the previous example (the inner divisors of $z^{n - 1}$).
\end{Example}

\begin{Example}  
In the previous example if $\Theta(z) = z^4$, then the model space becomes $(z^4 H^2)^{\perp} = \bigvee\{1, z, z^2, z^3\}$ and the matrix of the compressed shift is
$$\left(
\begin{array}{cccc}
 0 & 0 & 0 & 0 \\
 1 & 0 & 0 & 0 \\
 0 & 1 & 0 & 0 \\
 0 & 0 & 1 & 0 \\
\end{array}
\right).$$
If $T$ is the {\em square} of the compressed shift, then $T$ has matrix representation 
$$\left(
\begin{array}{cccc}
 0 & 0 & 0 & 0 \\
 0 & 0 & 0 & 0 \\
 1 & 0 & 0 & 0 \\
 0 & 1 & 0 & 0 \\
\end{array}
\right).$$
If $\vec{v} = (z_1, z_2, z_3, z_4) \in \C^4$, one can quickly check that $\vec{v}$ is $T$-inner if and only if 
$$z_3 \overline{z_1} + z_4 \overline{z_2} = 0.$$ In terms of a function in the model space, this says, for example, that $f(z) = a + b z^3$ is $T$-inner for any $a, b \in \C$. 
\end{Example}

\begin{Example}\label{00uuucbbjbbjb}
Let $(T f)(z) = z f(z)$ be the unilateral shift on the Dirichlet space $\mathscr{D}$ of analytic functions 
$f(z) = \sum_{n \geq 0} a_n z^n$ on $\D$ for which 
\begin{equation}\label{nncowef}
\sum_{n \geq 0} (1 + n) |a_n|^2 < \infty.
\end{equation} The above quantity defines the square of the norm on $\mathscr{D}$.
In \cite{Seco, MR0145082} they discussed the $T$-inner functions. The reproducing kernel for $\mathscr{D}$ is $$k_{w}(z) = \frac{1}{\overline{w} z} \log \Big(\frac{1}{1  - \overline{w} z}\Big), \quad w, z \in \D,$$ and the function $$f(z) = k_{w}(w) - k_{w}(z)$$ is $T$-inner. 
\end{Example}

\begin{Example}\label{popoweripeo}
Let $(T f)(z) = z f(z)$ be the unilateral shift on the Bergman space $\mathscr{B}$ of analytic functions 
$f(z) = \sum_{n \geq 0} a_n z^n$ on $\D$ for which 
$$\sum_{n \geq 0} \frac{|a_n|^2}{n + 1} < \infty.$$
The above quantity defines the square of the norm on $\mathscr{B}$. The $T$-inner functions were discussed in \cite{MR1440934}.
 As in the Dirichlet space example, if 
$$k_{w}(z) = \frac{1}{(1 - \overline{w} z)^2}$$ denotes the reproducing kernel for $\mathscr{B}$ then 
$k_{w}(w) - k_{w}(z)$ is a $T$-inner function.
\end{Example}

\begin{Example}\label{erewrwe}
Consider the space $H^{2}_{1}$ of analytic functions $f \in H^2$ whose first derivative $f'$ also belongs to $H^2$. This space, along with other associated spaces, was studied by Korenblum in \cite{MR0317073} in his work on ideals of algebras of analytic functions. The quantity 
$$|f(0)|^2 + \sum_{n \geq 1} n^2 |a_n|^2$$ defines the square of the norm on this space. 
 This is a reproducing kernel Hilbert space with kernel 
$$k_{w}(z) = 1 + \sum_{n \geq 1} \frac{\overline{w}^n z^n}{n^2}.$$ The shift operator $(T f)(z)= z f(z)$ turns out to be continuous on $H^{2}_{1}$ and, as with previous two examples, $k_{w}(w) - k_{w}(z)$ is a $T$-inner function. 
\end{Example}

\begin{Example}
We point out that $T$-inner functions for $(Tf)(z) = zf(z)$ in other weighted Hardy spaces were studied in \cite{Dima}.
\end{Example}

Observe that in the four previous examples of the shift on the Dirichlet space, the Bergman space, $H^{2}_{1}$, and other weighted spaces, the respective $T$-inner functions look quite different.

%%%%%%%%%%%%%%%%%%%%%%%%%%%%%%%%%%%%%%%%%%%%%%%%%%%%%%%%%%%%%%%%%%%%%%%%
%%%%%%%%%%%%%%%%%%%%%%%%%%%%%%%%%%%%%%%%%%%%%%%%%%%%%%%%%%%%%%%%%%%%%%%%
%%%%%%%%%%%%%%%%%%%%%%%%%%%%%%%%%%%%%%%%%%%%%%%%%%%%%%%%%%%%%%%%%%%%%%%%

\section{Elementary Properties}\label{EP}

Here are some routine but nevertheless interesting facts about $T$-inner vectors. Recall the definition of $[T \vec{v}]$ from \eqref{3487ytrgeifodjsvca}.

\begin{Proposition}\label{66zczxc}
Suppose that $T$ is a bounded linear transformation on a Hilbert space $\mathscr{H}$ and $\vec{v}$ is any vector in $\mathscr{H}$.  Let $P_{[T \vec{v}]}$ be the orthogonal projection onto the subspace
$[T \vec{v}]$.
Then the vector $\vec{v} - P_{[T \vec{v}]} \vec{v}$ is $T$-inner (or zero), and every $T$-inner vector arises  in this way. 
\end{Proposition}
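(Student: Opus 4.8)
The plan is to prove two things: first, that $\vec{w} := \vec{v} - P_{[T\vec{v}]}\vec{v}$ is always $T$-inner (unless it is zero); and second, that every $T$-inner vector has exactly this form for a suitable choice of generating vector. For the first claim, I would argue as follows. Since $T^n\vec{v} \in [T\vec{v}]$ for every $n \geq 1$ (these are among the generators of the invariant subspace, or limits of linear combinations thereof), and since $\vec{w} = \vec{v} - P_{[T\vec{v}]}\vec{v}$ is by construction orthogonal to the entire subspace $[T\vec{v}]$, we immediately get $\vec{w} \perp T^n\vec{v}$ for all $n \geq 1$. But I need $\vec{w} \perp T^n\vec{w}$, not $\vec{w} \perp T^n\vec{v}$. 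Here the key observation is that $\vec{w} - \vec{v} = -P_{[T\vec{v}]}\vec{v} \in [T\vec{v}]$, so $T^n\vec{w} - T^n\vec{v} = T^n(\vec{w}-\vec{v}) \in [T\vec{v}]$ as well (the subspace $[T\vec{v}]$ is $T$-invariant, and in fact $T[T\vec{v}] \subseteq [T\vec{v}]$ because applying $T$ to any generator $T^k\vec{v}$ yields $T^{k+1}\vec{v}$, another generator). Therefore $T^n\vec{w} \in [T\vec{v}]$ for all $n \geq 1$, and since $\vec{w} \perp [T\vec{v}]$ we conclude $\langle \vec{w}, T^n\vec{w}\rangle = 0$ for all $n \geq 1$. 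Hence $\vec{w}$ is $T$-inner (when nonzero).

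For the converse, suppose $\vec{v}$ is itself $T$-inner. I claim that $\vec{v} = \vec{v} - P_{[T\vec{v}]}\vec{v}$, i.e. that $P_{[T\vec{v}]}\vec{v} = 0$, equivalently $\vec{v} \perp [T\vec{v}]$. By definition of $T$-inner, $\vec{v} \perp T^n\vec{v}$ for all $n \geq 1$. The subspace $[T\vec{v}]$ is the closed linear span of $\{T\vec{v}, T^2\vec{v}, \ldots\}$, so $\vec{v}$ is orthogonal to each generator, hence to their linear span, hence (by continuity of the inner product) to the closure $[T\vec{v}]$. Thus $P_{[T\vec{v}]}\vec{v} = 0$ and $\vec{v} = \vec{v} - P_{[T\vec{v}]}\vec{v}$, so $\vec{v}$ arises in the stated form (taking the generating vector to be $\vec{v}$ itself). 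This completes the characterization.

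I do not anticipate a serious obstacle here; the argument is essentially bookkeeping about invariant subspaces and projections. The one point that requires a moment's care is the inclusion $T^n\vec{w} \in [T\vec{v}]$ in the first part: one must note that $[T\vec{v}]$ is invariant under $T$ (not merely under the full algebra generated by $T$), which is clear since $T$ maps the generating set $\{T^k\vec{v} : k \geq 1\}$ into itself and $T$ is continuous, so it maps the closed span into itself. Everything else follows from the defining property of the orthogonal projection onto a closed subspace. It may be worth remarking that the representing vector is not unique — for instance, scaling $\vec{v}$ by a nonzero scalar, or adding to it a vector already in $[T\vec{v}]$, leaves $\vec{v} - P_{[T\vec{v}]}\vec{v}$ unchanged up to that modification — but the proposition only asserts existence of such a representation, which is what we have shown.
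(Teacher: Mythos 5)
Your proof is correct and follows essentially the same route as the paper's: show that $\vec{v}-P_{[T\vec{v}]}\vec{v}$ is orthogonal to $[T\vec{v}]$ while $T^{n}(\vec{v}-P_{[T\vec{v}]}\vec{v})$ lies in $[T\vec{v}]$ by $T$-invariance, and for the converse note that a $T$-inner $\vec{v}$ is orthogonal to all of $[T\vec{v}]$ so its projection there vanishes. The only (cosmetic) difference is that the paper phrases the orthogonality via the norm-minimization characterization $\|\vec{u}+\alpha\vec{v}\|\geq\|\vec{u}\|$ (to foreshadow the Birkhoff--James version later), whereas you argue directly with inner products and make explicit the $T$-invariance step $T^{n}\vec{w}\in[T\vec{v}]$ that the paper leaves implicit.
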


\begin{proof}
Observe that for any two vectors $\vec{u}, \vec{v}$ in a Hilbert space $\mathscr{H}$  
we have 
\begin{equation}\label{po09oifrgefd}
\vec{u} \perp \vec{v} \iff \|\vec{u} + \alpha \vec{v}\| \geq \|\vec{u}\|, \quad \alpha \in \C.
\end{equation}
To see this, use the Pythagorean theorem for one direction and the definition of the orthogonal projection of $\vec{u}$ onto $\vec{v}$ for the other. 

By the definition of the orthogonal projection $P_{[T \vec{v}]}$, we know that 
$$\vec{v} - P_{[T \vec{v}]} \vec{v} \perp [T \vec{v}]$$ and so for any $n \geq 1$ we can use \eqref{po09oifrgefd} to see that 
$$\|(\vec{v} - P_{[T \vec{v}]} \vec{v})  - \alpha T^{n} (\vec{v} - P_{[T \vec{v}]} \vec{v})\| \geq \|\vec{v} - P_{[T \vec{v}]} \vec{v}\|, \quad \alpha \in \C.$$
Another application of \eqref{po09oifrgefd} yields
$$\vec{v} - P_{[T \vec{v}]} \vec{v} \perp T^{n} (\vec{v} - P_{[T \vec{v}]} \vec{v})$$ which says that $\vec{v} - P_{[T \vec{v}]} \vec{v}$ is $T$-inner. 

Now suppose that $\vec{v}$ is $T$-inner. By the definition of $T$-inner, $\vec{v} \perp \vec{z}$ for all $\vec{z} \in [T \vec{v}]$ which implies 
$$\|\vec{v}\| \leq \|\vec{v} - \vec{z}\|, \quad \vec{z} \in [T \vec{v}].$$
By the uniqueness of $P_{[T \vec{v}]} \vec{v}$ as a vector satisfying the above inequality, we see that $P_{[T \vec{v}]} \vec{v} = \vec{0}$ and so the $T$-inner vector $\vec{v}$ has the desired form $\vec{v} = \vec{v} - P_{[T \vec{v}]} \vec{v}$.
\end{proof}

\begin{Remark}
This proposition suggests a possible avenue to describe the $T$-inner vectors. Indeed, if $\{\vec{u}_1, \vec{u}_2, \ldots\}$ is an orthonormal basis for $[T \vec{v}]$, then Proposition \ref{66zczxc} says that every $T$-inner function can be described as 
\begin{equation}\label{03495fsd}
\vec{v} - \sum_{j \geq 1} \langle \vec{v}, \vec{u}_j\rangle \vec{u}_j.
\end{equation}
 Though this approach might seem initially appealing, this is not always a tractable problem. 
For example, when  $T = T_{\phi}$, $\phi \in H^{\infty}$, is an analytic Toeplitz operator on $H^2$, as in Example \ref{oiuweoiruew}, the above analysis requires a description of 
$$[T_{\phi} f] = \bigvee\{\phi f, \phi^2 f, \phi^3 f, \ldots\}$$ which can be extremely complicated.  

When $\phi(z) = z$, things become much easier in that Beurling's theorem \cite{Dur} says that $[z f] = z I_{f} H^2$, where $I_f$ is the (classical) inner factor of $f$. Moreover, due to the fact that each of the functions $z^{n + 1} I_{f}$ has unimodular boundary values, along with Beurling's theorem, the set $\{z^{n + 1} I_f: n \geq 0\}$ is an orthonormal basis for $z I_f H^2$. Furthermore, following the formula in \eqref{03495fsd}, we have 
$$\langle f, z^{n + 1} I_{f}\rangle = \widehat{O_{f}}(n + 1),$$
where $\widehat{O_{f}}(n + 1)$ is $(n + 1)$st Fourier coefficient of the outer factor $O_{f}$ of $f$. Thus we obtain the curious fact that
\begin{equation}\label{63443yurfv}
f - \sum_{n = 0}^{\infty} \widehat{O_{f}}(n + 1) z^{n + 1} I_{f} = \widehat{O}_f(0)I_f
\end{equation}
is inner (in the classical sense) for any nonzero $f \in H^2$ and moreover,  any inner function arises in this fashion. Note that when $f$ is inner then $\widehat{O_{f}}(n + 1) = 0$ for all $n \geq 0$ and so the expression in \eqref{63443yurfv} simply reduces to $f$. When $f$ is outer, then $I_{f} = 1$ and \eqref{63443yurfv} becomes the constant function $\widehat{O_{f}}(0)$ which, according to our definitions, is inner. 
\end{Remark}

\begin{Proposition}
A vector $\vec{v} \in \mathscr{H}$ is $T$-inner if and only if $\vec{v}$ is $T^{*}$-inner. 
\end{Proposition}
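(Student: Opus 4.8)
The plan is to observe that $T$-innerness is a statement about the vanishing of the numbers $\langle \vec v, T^n \vec v\rangle$ for $n\geq 1$, and that passing from $T$ to $T^*$ simply replaces each such number by its complex conjugate, so the vanishing condition is unchanged.

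Concretely, first I would record the adjoint identity $(T^*)^n = (T^n)^*$, which follows by an immediate induction from $(AB)^* = B^*A^*$. Then, for each fixed $n\geq 1$, I would compute
\[
\langle \vec v, (T^*)^n \vec v\rangle = \langle \vec v, (T^n)^* \vec v\rangle = \langle T^n \vec v, \vec v\rangle = \overline{\langle \vec v, T^n \vec v\rangle}.
\]
Hence $\langle \vec v, (T^*)^n \vec v\rangle = 0$ holds if and only if $\langle \vec v, T^n \vec v\rangle = 0$ holds. Running this equivalence over all $n\geq 1$ shows that the defining condition $\vec v \perp T^n\vec v$ for all $n\geq 1$ is equivalent to $\vec v \perp (T^*)^n\vec v$ for all $n\geq 1$; since the nonvanishing requirement $\vec v\neq \vec 0$ is the same in both definitions, $\vec v$ is $T$-inner precisely when it is $T^*$-inner.

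There is essentially no obstacle here: the statement is a direct consequence of the conjugate symmetry of the inner product together with the elementary fact that $T^*$ is again a bounded operator on $\mathscr H$, so that the notion of $T^*$-inner is well posed. The only thing worth being careful about is the direction of the conjugation in the Hilbert space convention being used (the excerpt takes $\langle\cdot,\cdot\rangle$ linear in the first slot, as in Example \ref{nnnnnc}), but since we only ever assert that a quantity equals zero, and zero is real, even that subtlety is immaterial. One could equally phrase the argument in terms of the characterization $\vec v\perp[T\vec v]_T \iff \vec v\perp[T^*\vec v]_{T^*}$ noted after \eqref{3487ytrgeifodjsvca}, but the coefficientwise computation above is the cleanest route.
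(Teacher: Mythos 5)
Your proof is correct and follows essentially the same route as the paper's: both rest on the identity $\langle \vec v, T^n\vec v\rangle = \langle (T^*)^n\vec v,\vec v\rangle$ (equivalently, that passing to the adjoint conjugates the relevant inner products), so the vanishing conditions coincide. Your version just spells out the induction on $(T^n)^* = (T^*)^n$ and the conjugate-symmetry step a bit more explicitly.
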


\begin{proof}
For any $n \geq 1$ we have 
$$\langle \vec{v}, T^{n} \vec{v}\rangle = \langle {T^{*}}^n \vec{v}, \vec{v}\rangle.$$ This shows that $\vec{v}$ is $T$-inner if and only if $\vec{v}$ is $T^{*}$-inner. 
\end{proof}

Though the proposition above seems to be a triviality, we mention it since in the Banach space setting the $T$-inner vectors and the $T^{*}$-inner vectors are from different spaces (see Proposition \ref{nnnnuuhuhuh}).

%%%%%%%%%%%%%%%%%%%%%%%%%%%%%%%%%%%%%%%%%%%%%%%%%%%%%%%%%%%%%%%%%%%%%%%%
%%%%%%%%%%%%%%%%%%%%%%%%%%%%%%%%%%%%%%%%%%%%%%%%%%%%%%%%%%%%%%%%%%%%%%%%
%%%%%%%%%%%%%%%%%%%%%%%%%%%%%%%%%%%%%%%%%%%%%%%%%%%%%%%%%%%%%%%%%%%%%%%%

\section{Application: Zero sets for reproducing kernel Hilbert spaces}\label{s3}

In exploring the zero sets of functions in the Dirichlet space $\mathscr{D}$ (recall the definition from \eqref{nncowef}), Shapiro and Shields \cite{MR0145082} constructed solutions to certain extremal problems. As a consequence of their investigations, they developed necessary and sufficient conditions on a sequence of points in $\mathbb{D}$ to be the set of zeros of a non-trivial function from $\mathscr{D}$. 
(Towards a Banach space generalization of this, see Section \ref{section7}.)
We now recast the Shapiro-Shields construction in the language of $S$-inner functions on a more general class of Hilbert spaces of analytic functions and obtain a characterization of zero sets.   We will also begin to examine when these $S$-inner functions have extra zeros. 

Suppose $\Omega$ is a bounded domain in $\C$ with $0 \in \Omega$. Also suppose that $\mathscr{H}$ is a Hilbert space of (scalar-valued) analytic functions on $\Omega$ satisfying the following properties:

\noindent For every nonnegative integer $j$, and every $w\in\Omega$, there exists a constant $C = C(j, w)$ such that 
\begin{equation}\label{P1}
   |f^{(j)}(w)| \leq C\|f\|, \quad f \in \mathscr{H};
\end{equation}
\begin{equation}\label{P2}
f \in \mathscr{H} \implies z f(z) \in \mathscr{H};
\end{equation}
\begin{equation}\label{P3}
\bigvee\{z^j: j \geq 0\} =  \mathscr{H};
\end{equation}
\begin{equation}\label{P4}
w \in \Omega, f \in \mathscr{H} \implies (Q_{w}f)(z):= \frac{f(z) - f(w)}{z - w} \in \mathscr{H}
\end{equation}

The first property \eqref{P1} says that for each $w \in \Omega$, the point evaluation at $w$ of the $j$th order derivative of $f$ is continuous and so, by the Riesz representation theorem for Hilbert spaces, there is a $k_{j,w} \in \mathscr{H}$ (called a {\em reproducing kernel \cite{MR3526117}} for $\mathscr{H}$) for which  
$$f^{(j)}(w) = \langle f, k_{j, w}\rangle, \quad f \in \mathscr{H}.$$
When $j=0$ we write $k_{w}$ in place of $k_{0,w}$.

The closed graph theorem, together with the second property \eqref{P2}, shows that the shift operator 
$$S_{\mathscr{H}}: \mathscr{H} \to \mathscr{H}, \quad (S_{\mathscr{H}} f)(z) = z f(z),$$
is well defined and continuous on $\mathscr{H}$.  We included the hypothesis that $\Omega$ was a bounded domain from the beginning. However, the continuity of $S_{\mathscr{H}}$ along with the existence of reproducing kernels $k_{w}, w \in \Omega$, automatically gives us that $\Omega$ is a bounded domain. Indeed, it is a straightforward computation to show that 
$$S_{\mathscr{H}}^{*} k_{w} = \overline{w} k_{w}, \quad w \in \Omega.$$   It follows that $\{\overline{w}: w \in \Omega\}$ must belong to the spectrum of $S_{\mathscr{H}}^{*}$, which, by basic functional analysis, is a bounded set. Thus, at the end of the day, $\Omega$ is a bounded domain anyway.

Furthermore, the list of hypotheses \eqref{P1} -- \eqref{P4} is actually redundant in that we can deduce the first condition from the other three.  To see this, let $f\in\mathscr{H}$, and $w\in\Omega$.  By \eqref{P3}, $\mathscr{H}$ contains the constant function $1$, and so
\begin{align*}
    |f(w)|  &=  \frac{\|f(w)\|}{\|1\|} \\
      &\leq \frac{\|f - f(w)\|+\|f\|}{\|1\|} \\
      & \leq \frac{\|(S_{\mathscr{H}} - w I) Q_{w} f\| + \|f\|}{\|1\|}\\
      &\leq \frac{\|S_{\mathscr{H}} - w I\| \|Q_w f\|+\|f\|}{\|1\|} \\
      &\leq \frac{\|S_{\mathscr{H}} - w I\| \|Q_w\|  + 1}{\|1\|} \|f\|.
\end{align*}
From the Taylor series of $f$ about $w$, we see that
\[
     (Q_w f)(z) = f'(w) + \frac{f''(w)}{2!}(z-w) + \cdots.
\]
This shows that $(Q_w f)(w) = f'(w)$.  By the boundedness of $Q_w$, and of point evaluation as shown above, it must be that point evaluation at a derivative is bounded.  This result extends to derivatives of all orders, and \eqref{P1} follows.

We point out that many of the known Hilbert spaces of analytic functions (Hardy, Bergman, Dirichlet, etc) discussed previously satisfy conditions \eqref{P1} - \eqref{P4}.

If $(w_j)_{j \geq 1}$ is a sequence of points in $\Omega$ (repetitions allowed), then we say, for fixed $g \in \mathscr{H}$, that 
$$Z(g) = (w_j)_{j \geq 1},$$ when $w_j$ has multiplicity $r_j \geq 1$,  
$$g(w_j) = g'(w_j) = \cdots = g^{(r_j - 1)}(w_j) = 0$$ 
and 
$$g^{(r_j)}(w_j) \not = 0$$
and 
$$g(w) \not = 0 \; \; \mbox{when} \; \; w \not \in (w_j)_{j \geq 1}.$$

We say that $(w_j)_{j \geq 1} \subset \Omega$ is a {\em zero set} for $\mathscr{H}$ if $Z(g) \supset (w_j)_{j \geq 1}$ for some $g \in \mathscr{H} \setminus \{0\}$.  Here, $g$ may have zeros in addition to the prescribed points $(w_j)_{j \geq 1}$.  
Obviously $(w_{j})_{j \geq 1}$ cannot be a zero set for $\Omega$ if it has an accumulation point in $\Omega$.

\begin{Lemma}\label{oirutoriug}
Suppose $p$ is a polynomial whose zeros 
$$W  = \{w_1, w_2, \ldots, w_n\},$$ repeated according to their multiplicity, belong to $\Omega$. Then 
$$
[p] := \bigvee \{S_{\mathscr{H}}^{j} p: j \geq 0\}
 = \{g \in \mathscr{H}: Z(g) \supset W\}.
$$
\end{Lemma}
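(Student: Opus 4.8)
The plan is to prove the two inclusions separately. The easy direction is $[p]\subset\{g\in\mathscr{H}:Z(g)\supset W\}$. Every element of $\{g\in\mathscr{H}:Z(g)\supset W\}$ is characterized by the vanishing of finitely many bounded linear functionals, namely $g\mapsto g^{(i)}(w)$ for each zero $w$ of $p$ and each $i$ strictly less than its multiplicity; by property \eqref{P1} these functionals are continuous, so the set $\{g\in\mathscr{H}:Z(g)\supset W\}$ is a closed subspace of $\mathscr{H}$. It is also clearly $S_{\mathscr{H}}$-invariant, since multiplying by $z$ preserves (and can only increase) the order of vanishing at each $w_j\in W$. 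Since $p$ itself lies in this subspace, so does $[p]$, the smallest closed $S_{\mathscr{H}}$-invariant subspace containing $p$.

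The substantive direction is $\{g\in\mathscr{H}:Z(g)\supset W\}\subset[p]$. The key tool is the operator $Q_w$ from \eqref{P4}: given $g$ with $Z(g)\supset W$, I want to divide out the polynomial $p$ and land back inside $\mathscr{H}$. Concretely, write $p(z)=c\prod_{k=1}^n(z-w_k)$ and apply the $Q_{w_k}$'s successively: since $g$ vanishes to the appropriate order at each $w_k$, one checks inductively that $h:=Q_{w_n}\cdots Q_{w_1}g$ is a well-defined element of $\mathscr{H}$ (each application stays in $\mathscr{H}$ by \eqref{P4}, and the order-of-vanishing bookkeeping guarantees no spurious poles are introduced), and that $g = c^{-1}\,p\cdot h$ as analytic functions on $\Omega$. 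So it remains to show that $p\cdot h\in[p]$ for every $h\in\mathscr{H}$, i.e.\ that $M_p\mathscr{H}\subset[p]$ where $M_p$ denotes multiplication by $p$. By \eqref{P3}, $h$ is a limit in $\mathscr{H}$ of polynomials $q_m$; since $S_{\mathscr{H}}$ is continuous, multiplication by the fixed polynomial $p$ is continuous on $\mathscr{H}$, so $p\cdot q_m\to p\cdot h$ in $\mathscr{H}$. Each $p\cdot q_m$ is a polynomial in $S_{\mathscr{H}}$ applied to $p$, hence lies in $[p]$, and $[p]$ is closed, so $p\cdot h\in[p]$. This gives $g=c^{-1}p\cdot h\in[p]$, completing the inclusion.

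I expect the main obstacle to be the bookkeeping in the successive-division step: one must verify carefully that applying $Q_{w_1},\dots,Q_{w_n}$ in turn is legitimate when the $w_k$ repeat (so that a zero of multiplicity $r$ really does absorb $r$ factors), that the intermediate functions genuinely lie in $\mathscr{H}$ rather than merely being analytic on $\Omega$, and that the resulting identity $p\cdot h = c\,g$ holds. The cleanest way to organize this is to prove, by induction on $\deg p$, the auxiliary statement: if $q$ is a polynomial all of whose zeros lie in $\Omega$ and $g\in\mathscr{H}$ with $Z(g)\supset Z(q)$ (as multisets), then $g/q\in\mathscr{H}$; the inductive step peels off one linear factor $z-w$ via $Q_w$, using that $g/(z-w) = (Q_w g) + \big(g(w)/(z-w)\big)$ and that $g(w)=0$. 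Everything else — closedness of the zero-set subspace, $S_{\mathscr{H}}$-invariance, continuity of $M_p$, density of polynomials — is routine given \eqref{P1}--\eqref{P4} and the already-established continuity of $S_{\mathscr{H}}$.
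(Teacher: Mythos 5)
Your proposal is correct and follows essentially the same route as the paper's proof: the easy inclusion via continuity of point evaluations of derivatives from \eqref{P1}, and the substantive inclusion by applying \eqref{P4} $n$ times to conclude $g/p\in\mathscr{H}$, then approximating $g/p$ by polynomials via \eqref{P3} and using the continuity of $p(S_{\mathscr{H}})$. Your extra care about the bookkeeping for repeated zeros and the closedness and $S_{\mathscr{H}}$-invariance of the zero-set subspace only makes explicit what the paper leaves implicit.
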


\begin{proof}
By property \eqref{P1} we see that since $Z(p) = W$ then
\begin{equation}\label{vv188zlretM}
\bigvee \{S_{\mathscr{H}}^{j} p: j \geq 0\}
 \subset \{g \in \mathscr{H}: Z(g) \supset W\}.
 \end{equation} For the other inclusion, let $g \in \mathscr{H}$ with $Z(g) \supset W$. Observe that $n$ applications of property \eqref{P4} shows that $g/p \in \mathscr{H}$. Now use  condition \eqref{P3}, the density of the polynomials in $\mathscr{H}$, to produce a sequence of polynomials $q_n$ so that $q_n \to g/p$ in the norm of $\mathscr{H}$. Using the continuity of $S_{\mathscr{H}}$ (really the continuity of $p(S_{\mathscr{H}})$) we see that $p q_n \to g$ in $\mathscr{H}$. This yields $\supset$ in \eqref{vv188zlretM} which completes the proof. 
\end{proof}

Sticking to the same notation as before, taking into account the multiplicities of the $w \in W$,  we use the notation 
$$\bigvee \{k_{w}: w \in W\}$$ to include the linear span of $k_{w}$ along with 
$k_{s, w_j}$ for $0 \leq s \leq r_{w} - 1$. 

\begin{Lemma}\label{sldkfjsdfeq}
Suppose $p$ is a polynomial whose zeros 
$$W  = \{w_1, w_2, \ldots, w_n\},$$repeated according to their multiplicity, belong to $\Omega$. Then 
$$[p] = \left(\bigvee\{k_{w}: w \in W\}\right)^{\perp}.$$
\end{Lemma}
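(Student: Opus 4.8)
The plan is to prove the equality $[p] = \left(\bigvee\{k_w : w \in W\}\right)^\perp$ by identifying both sides with $\{g \in \mathscr{H} : Z(g) \supset W\}$. Lemma \ref{oirutoriug} already gives $[p] = \{g \in \mathscr{H} : Z(g) \supset W\}$, so it suffices to show that this latter set equals $\left(\bigvee\{k_w : w \in W\}\right)^\perp$. This reduces the problem entirely to understanding the orthogonal complement of the span of reproducing kernels.

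First I would recall the reproducing property: for each $w \in \Omega$ and each nonnegative integer $s$, the function $k_{s,w} \in \mathscr{H}$ satisfies $f^{(s)}(w) = \langle f, k_{s,w} \rangle$ for all $f \in \mathscr{H}$. Given a point $w \in W$ with multiplicity $r_w$, the condition $Z(g) \supset W$ requires precisely that $g(w) = g'(w) = \cdots = g^{(r_w - 1)}(w) = 0$, which by the reproducing property is exactly the statement that $g \perp k_{s,w}$ for $0 \le s \le r_w - 1$. Running over all the distinct points of $W$ with their multiplicities, $Z(g) \supset W$ is equivalent to $g$ being orthogonal to every element of the spanning set $\{k_{s,w} : w \in W,\ 0 \le s \le r_w - 1\}$, which (using the notational convention introduced just before the lemma) is exactly $g \perp \bigvee\{k_w : w \in W\}$, i.e. $g \in \left(\bigvee\{k_w : w \in W\}\right)^\perp$. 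Chaining this with Lemma \ref{oirutoriug} completes the proof.

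I do not anticipate a serious obstacle here; the argument is essentially a bookkeeping exercise translating vanishing-to-order-$r_w$ conditions into orthogonality to a finite list of kernel functions, combined with the already-established Lemma \ref{oirutoriug}. The one small point worth stating carefully is that orthogonality to the linear span of a finite set is the same as orthogonality to each member, and that orthogonality to a set is the same as orthogonality to its closed linear span — both standard, but they are what let us pass between the raw vanishing conditions and the subspace $\bigvee\{k_w : w \in W\}$. It is also worth noting explicitly that all the relevant $k_{s,w}$ genuinely lie in $\mathscr{H}$, which follows from property \eqref{P1} (equivalently, from the redundancy argument given in the text deriving \eqref{P1} from \eqref{P2}--\eqref{P4}).

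Thus the proof structure is: (1) invoke Lemma \ref{oirutoriug} to get $[p] = \{g \in \mathscr{H} : Z(g) \supset W\}$; (2) unwind the definition of $Z(g) \supset W$ into finitely many vanishing conditions on $g$ and its derivatives at the points of $W$; (3) apply the reproducing property to rewrite each vanishing condition as an orthogonality relation $g \perp k_{s,w}$; (4) conclude that the set of such $g$ is $\left(\bigvee\{k_w : w \in W\}\right)^\perp$.
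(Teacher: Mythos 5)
Your proposal is correct and follows essentially the same route as the paper: both directions come down to using the reproducing property of the kernels $k_{s,w}$ to translate the vanishing conditions defining $Z(g) \supset W$ into orthogonality against $\bigvee\{k_w : w \in W\}$, combined with Lemma \ref{oirutoriug} to identify that common set with $[p]$. The only cosmetic difference is that you phrase the argument as equating both sides with $\{g \in \mathscr{H} : Z(g) \supset W\}$ rather than proving the two inclusions separately, and you are somewhat more explicit about the multiplicity bookkeeping.
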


\begin{proof}
Suppose that 
$$g \in \left(\bigvee\{k_{w}: w \in W\}\right)^{\perp}.$$
The reproducing property of the kernels $k_{w}$ will show that $Z(g) \supset W$ and so Lemma \ref{oirutoriug} yields $g \in [p]$. Conversely, if $g \in [p]$ then $Z(g) \supset W$ and so $g$ has zeros with at least the correct multiplicities at the $w \in W$ and so 
$0 =  \langle g, k_{w}\rangle$. Thus $g \perp k_{w}$ for all $w \in W$ which proves the reverse inclusion. 
\end{proof}

We now recast a result of Shapiro and Shields \cite{MR0145082} to develop a criterion, based on $S_{\mathscr{H}}$-inner functions, for an infinite sequence $(w_j)_{j \geq 1} \subset \Omega \setminus \{0\}$ to be a zero set for $\mathscr{H}$. To this end, let 
$$W_n = \{w_1, w_2, \ldots, w_n\}$$ and 
$$f_n(z)  = \prod_{j = 1}^{n} \Big(1 - \frac{z}{w_j}\Big),$$
which belongs to $\mathscr{H}$ by \eqref{P3}. Define the function 
$$J_n = f_n - P_{[z f_n]} f_n,$$
where $P_{[z f_n]}$ is the orthogonal projection of $\mathscr{H}$ onto 
$$[z f_n] = \bigvee\{z^{j} f_n: j \geq 1\},$$ and note that Proposition \ref{66zczxc} shows that $J_n$ is $S_{\mathscr{H}}$-inner. For notational convenience we are using $[z f_n]$ in place of the more cumbersome $[S_{\mathscr{H}} f]$.

To compute $J_n$ somewhat explicitly, let  $$v_1, v_2, \ldots, v_n$$ denote the Gram-Schmidt normalization of the kernel functions
$$k_{w_1}, \ldots, k_{w_n},$$
where, as discussed earlier in this section, we include $k_{s, w}$ for $0 \leq s \leq r_{w} - 1$ if the multiplicity of $w$ is more than one.  
Note that 
$$\bigvee\{k_{w_j}: 1 \leq j \leq n\} = \bigvee\{v_j: 1 \leq j \leq n\}$$ and by Lemma \ref{sldkfjsdfeq},
$$(\bigvee\{v_j: 1 \leq j \leq n\})^{\perp} = \{f \in \mathscr{H}: Z(f) \supset W_n\}.$$

Now define 
$$v_0 = \frac{k_0 - \sum_{j = 1}^{n} \langle k_0, v_j\rangle v_j}{\|k_0 - \sum_{j = 1}^{n} \langle k_0, v_j\rangle v_j\|}.$$ Observe that $v_0 \not = 0$, since
 $$k_{0} \not \in \bigvee\{k_{w_j}: 1 \leq j \leq n\},$$
 and that 
$$v_0, v_1, \ldots, v_n$$ is an orthonormal basis basis for 
$$\bigvee\{k_0, k_{w_1}, \ldots, k_{w_n}\}.$$ By Lemmas \ref{oirutoriug} and \ref{sldkfjsdfeq}, 
\begin{align*}
(\bigvee\{v_j: 0 \leq j \leq n\})^{\perp} & = \{g \in \mathscr{H}: Z(g) \supset W_n \cup \{0\}\}\\
&  = [z f_n].
\end{align*}

Basic linear algebra shows that 
\begin{align*}
P_{[z f_n]} f_n & = f_n - \sum_{j = 0}^{n} \langle f_n, v_j\rangle v_j \\
& = f_n - \langle f_n, v_0\rangle v_0
\end{align*}
and thus 
\begin{align}
J_n & = f_n - P_{[z f_n]} \\
& = \langle f_n, v_0\rangle v_0 \nonumber\\
& = \Big\langle f_n, \frac{k_0 - \sum_{j = 1}^{n} \langle k_0, v_j\rangle v_j}{\|k_0 - \sum_{j = 1}^{n} \langle k_0, v_j\rangle v_j\|}\Big\rangle \frac{k_0 - \sum_{j = 1}^{n} \langle k_0, v_j\rangle v_j}{\|k_0 - \sum_{j = 1}^{n} \langle k_0, v_j\rangle v_j\|}\nonumber \\
& = \frac{k_0 - \sum_{j = 1}^{n} \langle k_0, v_j\rangle v_j}{\|k_0 - \sum_{j = 1}^{n} \langle k_0, v_j\rangle v_j\|^2}.\label{bbbbcbcb}
\end{align}
In the above calculation
note the use of the facts that $\langle f_n, v_j\rangle = 0$ for all $1 \leq j \leq n$ and $\langle f_n, k_0 \rangle = f_n(0) = 1$. This says that 
\begin{align}
\|J_n\|^2 & = \frac{1}{\|k_0 - \sum_{j = 1}^{n} \langle k_0, v_j\rangle v_j\|^2} \nonumber\\
& = \frac{1}{\|k_0\|^2 - \sum_{j = 1}^{n} |\langle k_0, v_j\rangle|^2}. \label{skdfhgsdkfld11}
\end{align} By Bessel's inequality, applied to  the denominator of the expression above, we have $\|J_n\| > 1/\|k_0\|$, and that $\|J_n\|$ is a non-decreasing sequence in $n$. 

Let $\Phi_n$ be the co-projection of $k_0$ onto 
$\{g \in \mathscr{H}: Z(g) \supset W_n\}$. Again, linear algebra will show that 
$$\Phi_n = \sum_{j = 1}^{n} \langle k_0, v_j \rangle v_j$$ and equations \eqref{bbbbcbcb} and \eqref{skdfhgsdkfld11} yield the identity
$$\Phi_n = k_0 - \frac{J_n}{\|J_n\|^2}.$$
By Bessel's inequality we have 
$$\|\Phi_n\|^2 = \sum_{j = 1}^{n} |\langle k_0, v_j\rangle|^2 \leq \|k_0\|^2.$$ 

We now present a technical lemma.

\begin{Lemma}
With the notation above,
$(w_j)_{j \geq 1}$ is a zero set for $\mathscr{H}$ if and only if 
$$\sup\{ \|\Phi_n\|: n \geq 1\} < \|k_0\|^2.$$ 
\end{Lemma}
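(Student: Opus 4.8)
The key quantity is the sequence $\|J_n\|$, which we have shown is non-decreasing and bounded below by $1/\|k_0\|$. I would first establish the link between the zero set condition and the convergence of $\|J_n\|$, and then translate that into a statement about $\|\Phi_n\|$ via the already-derived identity $\Phi_n = k_0 - J_n/\|J_n\|^2$, which gives
\[
\|\Phi_n\|^2 = \|k_0\|^2 - \frac{1}{\|J_n\|^2}.
\]
(This follows from expanding $\|k_0 - J_n/\|J_n\|^2\|^2$ and using $\langle k_0, J_n\rangle = J_n(0) = \|J_n\|^{-2}\langle k_0, k_0 - \Phi_n\rangle$; more directly, from \eqref{skdfhgsdkfld11}, $\|J_n\|^{-2} = \|k_0\|^2 - \|\Phi_n\|^2$.) Thus the asserted condition $\sup_n \|\Phi_n\|^2 < \|k_0\|^2$ is exactly equivalent to $\sup_n \|J_n\| < \infty$, and the whole lemma reduces to: \emph{$(w_j)_{j\geq 1}$ is a zero set for $\mathscr{H}$ if and only if $\sup_n \|J_n\| < \infty$.}

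\textbf{The forward direction.} Suppose $(w_j)_{j\geq1}$ is a zero set, so there is $g \in \mathscr{H}\setminus\{0\}$ with $Z(g)\supset(w_j)_{j\geq1}$; normalize so that $g(0) = 1$ (possible since $0$ is not among the $w_j$, but one should be a little careful — if $g(0)=0$ then $g$ vanishes at $0$ too, and one divides out the appropriate power of $z$ using \eqref{P4} and rescales; alternatively work with $g/g^{(m)}(0)$-type normalization). For each $n$, since $Z(g)\supset W_n$, Lemma \ref{oirutoriug} gives $g \in [f_n]$, and since $g(0)=1$ we have $g \notin [z f_n]$; in fact $g = J_n/\|J_n\|^2$ would be the natural guess only in the limit. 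The clean argument: $J_n$ is the metric projection giving the minimal-norm element of $[f_n]$ with value $1$ at $0$ — more precisely, among all $h\in[f_n]$ with $h(0)=1$, the element of smallest norm is $J_n/\langle J_n,k_0\rangle = J_n/(J_n(0)) $, wait: since $J_n(0) = \langle J_n, k_0\rangle = \|J_n\|^2 \cdot \|J_n\|^{-2} = 1/\|J_n\|^2 \cdot \|J_n\|^2$... let me just say $J_n(0) = \langle f_n, v_0\rangle\langle v_0, k_0\rangle/\|\cdot\|$, which from \eqref{bbbbcbcb} equals $\|J_n\|^{-2}\langle k_0-\Phi_n,k_0\rangle = \|J_n\|^{-2}(\|k_0\|^2-\|\Phi_n\|^2) = \|J_n\|^{-2}\cdot\|J_n\|^{-2}$... this is getting tangled; the honest statement is that $J_n/\|J_n\|^2$ is the element of minimal norm in the affine set $\{h\in[f_n]: h(0)=1\}$ — this is a standard extremal-problem fact, the minimal norm being $1/\|J_n\|$. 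Since $g$ lies in this affine set (after normalization) and $g\in[f_n]$ for every $n$ (as $W_n\subset Z(g)$), we get $1/\|J_n\|\leq\|g\|$, hence $\sup_n\|J_n\|\leq\|g\|<\infty$.

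\textbf{The reverse direction.} Suppose $M:=\sup_n\|J_n\|<\infty$. The sequence $(J_n)$ is bounded in $\mathscr{H}$, so it has a weakly convergent subsequence $J_{n_k}\rightharpoonup J$. By \eqref{P1}, point evaluations (and evaluations of derivatives) are weakly continuous, so $J(w_j) = \lim_k J_{n_k}(w_j)$ — and for $k$ large enough that $n_k\geq j$, we have $J_{n_k}(w_j)=0$ (with correct multiplicities, using derivative evaluations too, since $J_{n_k}\in[f_{n_k}]$ and $Z(f_{n_k})\supset W_{n_k}\ni w_j$). Hence $Z(J)\supset(w_j)_{j\geq1}$. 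It remains to check $J\neq0$: since $J_n(0)=\langle J_n,k_0\rangle$ and by \eqref{bbbbcbcb} this equals $1/\|J_n\|^2\cdot(\text{something})$ — concretely $\langle J_n,k_0\rangle = \langle f_n,v_0\rangle^2 \geq$ a uniform positive bound; cleanest is $\langle J_n, k_0\rangle = \|J_n\|^2/\|J_n\|^2$? No: from \eqref{bbbbcbcb}, $\langle J_n,k_0\rangle = \|k_0-\Phi_n\|^{-2}\langle k_0-\Phi_n,k_0\rangle = \|k_0-\Phi_n\|^{-2}(\|k_0\|^2-\|\Phi_n\|^2)$, and since $k_0-\Phi_n\perp\Phi_n$ this is $\|k_0-\Phi_n\|^{-2}\|k_0-\Phi_n\|^2 = 1$. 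Good — so $J_n(0)=1$ for all $n$, hence $J(0)=1$ by weak continuity of evaluation at $0$, so $J\neq0$, and $J$ witnesses that $(w_j)_{j\geq1}$ is a zero set.

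\textbf{Main obstacle.} The delicate point is the first ("forward") direction: making rigorous that $J_n/\|J_n\|^2$ solves the extremal problem $\min\{\|h\|: h\in[f_n],\ h(0)=1\}$ and that every admissible $g$ beats it, so that $1/\|J_n\|\le\|g\|$ uniformly. This is really the Shapiro–Shields extremal-function argument and needs the characterizations in Lemmas \ref{oirutoriug} and \ref{sldkfjsdfeq} plus the explicit formula \eqref{bbbbcbcb}; the normalization subtlety ($g(0)$ possibly zero, forcing a division by $z$ via \eqref{P4}) must be handled but is routine. The reverse direction is comparatively soft, relying only on boundedness $\Rightarrow$ weak compactness $\Rightarrow$ weak continuity of point evaluations from \eqref{P1}.
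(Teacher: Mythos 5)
Your reduction to the statement ``$(w_j)_{j\ge1}$ is a zero set iff $\sup_n\|J_n\|<\infty$'' via $\|\Phi_n\|^2=\|k_0\|^2-1/\|J_n\|^2$ is legitimate (it follows from \eqref{bbbbcbcb} and \eqref{skdfhgsdkfld11}), and your reverse direction --- boundedness of $(J_n)$, weak compactness, weak continuity of the (derivative) evaluation functionals from \eqref{P1}, and $J(0)=\lim J_{n_k}(0)=1$ to rule out the zero limit --- is correct. But the forward direction, as written, does not close. You have the extremal problem upside down: since $J_n=P_{[f_n]}k_0/\|P_{[f_n]}k_0\|^2$ already satisfies $J_n(0)=1$, it is $J_n$ itself (not $J_n/\|J_n\|^2$) that is the minimal-norm element of $\{h\in[f_n]:h(0)=1\}$, and the minimal norm is $\|J_n\|=1/\|P_{[f_n]}k_0\|=1/\|k_0-\Phi_n\|$, not $1/\|J_n\|$. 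The inequality you actually assert, $1/\|J_n\|\le\|g\|$, only gives the lower bound $\|J_n\|\ge1/\|g\|$ and says nothing about $\sup_n\|J_n\|$; the conclusion ``hence $\sup_n\|J_n\|\le\|g\|$'' is a non sequitur from that line. The repair is one sentence --- by Lemma \ref{oirutoriug} any normalized witness $g$ lies in $[f_n]$ with $g(0)=1$ for every $n$, so $\|J_n\|\le\|g\|$ by minimality --- but you must state the extremal characterization with the correct normalization for the argument to be valid.

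It is also worth noting that the paper proves the lemma by a much shorter route that avoids both the extremal problem and weak compactness. Since $\Phi_n$ is the orthogonal projection of $k_0$ onto $\bigvee\{k_{w_1},\dots,k_{w_n}\}$, Parseval gives $\sup_n\|\Phi_n\|^2=\sum_{j\ge1}|\langle k_0,v_j\rangle|^2$, and this equals $\|k_0\|^2$ exactly when $k_0\in\bigvee\{k_{w_j}:j\ge1\}$, i.e.\ (by Lemma \ref{sldkfjsdfeq}) exactly when every $f$ with $Z(f)\supset W$ satisfies $f(0)=0$; property \eqref{P4} (dividing by a power of $z$) then shows this happens iff $\mathscr{H}(W)=\{0\}$. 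Your route buys the additional information that a bounded sequence $(J_n)$ produces an actual nonvanishing witness as a weak limit, which is closer in spirit to how the paper later uses the lemma to prove Theorem \ref{v98u24tprefewq23re}, but it is longer and, in the forward direction, requires the extremal-function fact you misstated. (Finally, the inequality in the statement should of course read $\sup_n\|\Phi_n\|^2<\|k_0\|^2$; you silently and correctly work with the squared norm.)
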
 

\begin{proof}
Let $W = (w_j)_{j \geq 1}$ and
$$\mathscr{H}(W)  := \{g \in \mathscr{H}: Z(g) \supset W\}.$$ 
From our previous discussions we now see that 
$$\bigvee \{v_j: j \geq 1\} = \bigvee \{k_{w_j}: j \geq 1\}$$ and 
$$(\bigvee \{k_{w_j}: j \geq 1\})^{\perp} = \mathscr{H}(W).$$ 
Also observe that 
$$\sup\{\|\Phi_n\|: n \geq 1\} = \sum_{j \geq 1} |\langle k_{0}, v_j\rangle|^2 = \|k_0\|^2$$ 
if and only if 
$$k_0 \in \bigvee \{k_{w_j}: j \geq 1\}$$
if and only if 
$$f \in \mathscr{H}(W) \implies f(0) = 0.$$

 Thus if $\mathscr{H}(W) \not = \{0\}$ then for some $n \geq 0$, $f(z)/z^n$ belongs to $\mathscr{H}(W)$ (note the use of property \eqref{P4}) and does not vanish at the origin. The result now follows. 
\end{proof}

Finally we note that $1 = J_{n}(0) = \langle J_n, k_0\rangle$ and so 
\begin{align*}
\|\Phi_n\|^2 & = \langle \Phi_n, \Phi_n\rangle\\
& = \Big\langle k_0 - \frac{J_n}{\|J_n\|^2}, k_0 - \frac{J_n}{\|J_n\|^2}\Big\rangle\\
& = \|k_0\|^2 - \frac{1}{\|J_n\|^2}.
\end{align*}
Putting this all together,  we obtain the identity 
$$(\|k_0\|^2-\|\Phi_n\|^2 ) \|J_n\|^2 = 1,$$ which means that 
$(w_j)_{j \geq 1}$ is a zero set for $\mathscr{H}$ if and only if 
$$\sup\{ \|J_n\| : n \geq 1\}< \infty.$$  This leads to the following result of Shapiro and Shields \cite{MR0145082}, expressed in terms of $S_{\mathscr{H}}$-inner functions, and extended to a wide class of reproducing kernel Hilbert spaces of analytic functions. 

\begin{Theorem}\label{v98u24tprefewq23re}
Let $(w_j)_{j \geq 1} \subset \Omega \setminus \{0\}$ and 
$$f_n = \prod_{j = 1}^{n} \Big(1 - \frac{z}{w_j}\Big), \quad J_{n} = f_{n} - P_{[z f_n]} f_n.$$
Then 
\begin{enumerate}
\item Each $J_n$ is an $S_{\mathscr{H}}$-inner function;
\item the sequence $\|J_n\|$ is a non-decreasing sequence; 
\item $(w_j)_{j \geq 1}$ is a zero sequence for $\mathscr{H}$ if and only if 
$$\sup\{\|J_n\|: n \geq 1\} < \infty.$$
\end{enumerate}
\end{Theorem}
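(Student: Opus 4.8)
The plan is to package together the three pieces of the preceding discussion, since all of the substantive work has already been done in the lemmas and the displayed computations leading up to the statement. For item (1), I would invoke Proposition \ref{66zczxc} directly: by definition $J_n = f_n - P_{[z f_n]} f_n = f_n - P_{[S_{\mathscr{H}} f_n]} f_n$, which is exactly the form guaranteed by that proposition to be $S_{\mathscr{H}}$-inner (or zero). To rule out $J_n = 0$ one notes, as in \eqref{bbbbcbcb}, that $J_n = \langle f_n, v_0\rangle v_0$ with $v_0 \neq 0$ and $\langle f_n, v_0\rangle \neq 0$; the latter follows because $\langle f_n, k_0\rangle = f_n(0) = 1$ while $f_n \perp v_j$ for $1 \leq j \leq n$, so $\langle f_n, v_0\rangle = \langle f_n, k_0\rangle / \|k_0 - \sum_j \langle k_0, v_j\rangle v_j\| \neq 0$. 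Alternatively one can simply cite the computation $\|J_n\| > 1/\|k_0\| > 0$ established just before the theorem via Bessel's inequality.

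For item (2), I would point to the formula \eqref{skdfhgsdkfld11}, namely
\[
\|J_n\|^2 = \frac{1}{\|k_0\|^2 - \sum_{j=1}^{n} |\langle k_0, v_j\rangle|^2},
\]
and observe that as $n$ increases the sum $\sum_{j=1}^{n} |\langle k_0, v_j\rangle|^2$ is nondecreasing (more nonnegative terms are added), hence the denominator is nonincreasing and $\|J_n\|^2$ is nondecreasing. The one point requiring a word of care is that the vectors $v_1, \ldots, v_n$ for $W_n$ agree with the first $n$ of the vectors $v_1, \ldots, v_{n+1}$ for $W_{n+1}$ — this is exactly what the Gram–Schmidt construction gives, provided one orders the kernels compatibly as the $w_j$ are adjoined one at a time, so that passing from $W_n$ to $W_{n+1}$ only appends a new orthonormal vector. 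I would state this compatibility explicitly.

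For item (3), I would assemble the chain of equivalences already laid out: the technical Lemma (the unnumbered one preceding the theorem) gives that $(w_j)_{j\geq 1}$ is a zero set for $\mathscr{H}$ iff $\sup_n \|\Phi_n\|^2 < \|k_0\|^2$; the identity $\|\Phi_n\|^2 = \|k_0\|^2 - 1/\|J_n\|^2$ (from $\Phi_n = k_0 - J_n/\|J_n\|^2$ together with $\langle J_n, k_0\rangle = J_n(0) = 1$) rearranges to $(\|k_0\|^2 - \|\Phi_n\|^2)\|J_n\|^2 = 1$; and combining these, $\sup_n \|\Phi_n\|^2 < \|k_0\|^2$ iff $\inf_n (\|k_0\|^2 - \|\Phi_n\|^2) > 0$ iff $\sup_n \|J_n\|^2 < \infty$. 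I should double-check the claim $J_n(0) = 1$: since $J_n = f_n - P_{[z f_n]} f_n$ and every element of $[z f_n]$ vanishes at $0$ (being in $[S_{\mathscr{H}} f_n]$), we get $J_n(0) = f_n(0) = 1$.

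The main obstacle here is not any single hard estimate — it is the bookkeeping: making sure the Gram–Schmidt vectors are built in a consistent order so that the $n$-step and $(n+1)$-step orthonormal systems nest, and being careful about multiplicities (when a point $w_j$ repeats, one adjoins the higher-order kernels $k_{s,w_j}$ in the prescribed order). Once that nesting is pinned down, items (1)–(3) are immediate consequences of Proposition \ref{66zczxc}, Bessel's inequality, the unnumbered technical Lemma, and the algebraic identity $(\|k_0\|^2 - \|\Phi_n\|^2)\|J_n\|^2 = 1$, all of which are in hand. So the proof is essentially a matter of collecting these facts and citing them in the right order.
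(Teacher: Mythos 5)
Your proposal is correct and follows essentially the same route as the paper: item (1) via Proposition \ref{66zczxc}, item (2) via the formula \eqref{skdfhgsdkfld11} together with Bessel's inequality, and item (3) by combining the technical lemma with the identity $(\|k_0\|^2 - \|\Phi_n\|^2)\|J_n\|^2 = 1$. Your explicit remark about ordering the Gram--Schmidt vectors so that the orthonormal systems for $W_n$ and $W_{n+1}$ nest is a worthwhile clarification of a point the paper leaves implicit, but it does not change the argument.
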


\begin{Example}
 Suppose $\mathscr{H} = H^2$. A result of Takenaka \cite[p.~120]{MR3526203} shows that if $w_j$ are the proposed zeros, then the Gram-Schmidt process applied to the first $n$ Cauchy kernels $k_{w_1}, \ldots, k_{w_n}$ yields 
$$v_1 = \frac{\sqrt{1 - |w_1|^2}}{1 - \overline{w_1} z};$$
$$v_2 = \frac{\sqrt{1 - |w_2|^2}}{1 - \overline{w_2} z} \frac{w_1 - z}{1 - \overline{w_1} z};$$
$$v_3 = \frac{\sqrt{1 - |w_3|^2}}{1 - \overline{w_3} z} \frac{w_1 - z}{1 - \overline{w_1} z}  \frac{w_2 - z}{1 - \overline{w_2} z}; $$
and so on. The condition to be a zero set is then
$$\sup\{\|J_n\|: n \geq 1\} < \infty$$ 
which, by the previous analysis,  translates to 
$$\inf\left\{\|k_0\|^2 - \sum_{j = 1}^{n} |\langle k_0, v_j\rangle|^2: n \geq 1\right\} > 0.$$
A calculation shows that 
$$|\langle k_0, v_j\rangle|^2 = (1 - |w_j|^2) \prod_{i = 1}^{j - 1} |w_i|^2.$$
Furthermore, by telescoping series, 
$$\|k_0\|^2 - \sum_{j = 1}^{n} |\langle k_0, v_j\rangle|^2 = \prod_{j = 1}^{n} |w_j|^2.$$
Thus we have 
$$\inf\left\{\|k_0\|^2 - \sum_{j = 1}^{n} |\langle k_0, v_j\rangle|^2: n \geq 1\right\} = \inf\left\{\prod_{j = 1}^{n} |w_j|^2: n \geq 1\right\}$$
and the above infimum being positive is equivalent to the standard Blaschke condition
\[
       \sum_{j\geq1} \big( 1 - |w_j|  \big) < \infty.
\]
This confirms that the nontrivial zero sets of $H^2$ are exactly the Blaschke sequences.
\end{Example}

\begin{Example}
Let us compute the $S_{\mathscr{H}}$-inner function $J$ corresponding to a one point zero set. Suppose that $\mathscr{H}$ is a reproducing kernel Hilbert space satisfying our assumptions and $$f(z) = 1 - \frac{z}{w}, \quad w \in \Omega \setminus \{0\}.$$
Following the procedure in the derivation of Theorem \ref{v98u24tprefewq23re}, we define 
$$v_{w}(z) = \frac{k_{w}(z)}{\sqrt{k_{w}(w)}},$$ 
the normalized reproducing kernel at $w$. By the formula (\ref{bbbbcbcb}) for $J$ (the inner function corresponding to $f$) we have 
\begin{align}
J & = \frac{k_0 - \langle k_0, v_{w}\rangle v_w}{\|k_0\|^2 - |\langle k_0, v_w\rangle|^2}\nonumber\\
& = \frac{k_0 - \overline{v_w(0)} v_w}{k_{0}(0) - |v_{w}(0)|^2}\nonumber\\
& = \frac{k_0 - \frac{\overline{k_{w}(0)}}{\sqrt{k_{w}(w)}} \frac{k_w}{\sqrt{k_{w}(w)}}}{k_0(0) - \frac{|k_{w}(0)|^2}{k_{w}(w)}}\nonumber\\
& = \frac{k_{w}(w) k_{0} - \overline{k_{w}(0)} k_w}{k_{w}(w) k_{0}(0) - |k_{w}(0)|^2}.\label{zzz29}
\end{align}
Any nonzero constant multiple of an $S_{\mathscr{H}}$-inner function is also $S_{\mathscr{H}}$-inner, and so 
$$k_{w}(w) k_{0} - \overline{k_{w}(0)} k_w$$ is always an $S_{\mathscr{H}}$-inner function. 

 In the $H^2$ case we have 
$$k_{w}(z) = \frac{1}{1 - \overline{w} z}$$ and so \eqref{zzz29} yields 
$$J = \frac{1}{w} \frac{z - w}{1 - \overline{w} z},$$
which, as expected by classical theory, is a constant multiple of a Blaschke factor.

 In the Dirichlet space case, the reproducing kernel is $$k_{w}(z) = \frac{1}{\overline{w} z} \log \Big(\frac{1}{1 - \overline{w} z}\Big)$$ and \eqref{zzz29} yields 
 $$J = \frac{\log(1 - |w|^2)  - \frac{w}{z} \log(1- \overline{w} z)}{\log(1 - |w|^2)  - |w|^2}.$$
 In the Bergman space $\mathscr{B}^2$, we have 
 $$k_{w}(z) = \frac{1}{(1 - \overline{w} z)^2}$$ and \eqref{zzz29} yields 
 $$J = \frac{1 - \frac{(1 - |w|^2)^2}{(1 - \overline{w} z)^2}}{1 - (1 - |w|^2)^2}.$$

 Notice the concept of ``inner'' yields different types of functions in each Hardy, Dirichlet, and Bergman setting.  In the above analysis we see that the expression
\begin{equation} k_{w}(w) k_{0} - \overline{k_{w}(0)} k_w     \label{inneronezero} \end{equation} is always an $S_{\mathscr{H}}$-inner function.  This can also be verified directly from the calculation
$$\langle k_{w}(w) k_{0} - \overline{k_{w}(0)} k_w, S_{\mathscr{H}}^{n}  (k_{w}(w) k_{0} - \overline{k_{w}(0)} k_w)\rangle = 0, \quad n \geq 1.$$
 \end{Example} 
 
These next two results provide an interesting link between the zero set for $\mathscr{H}$ and the property that $\|S_{\mathscr{H}}\|$ or $\|Q_{0}\| = 1$. 

\begin{Theorem}\label{pppxixixi777}
Suppose that $\mathscr{H}$ is a RKHS of anaytic functions on $\mathbb{D}$ satisfying conditions \eqref{P1}--\eqref{P4}. If $\|S_{\mathscr{H}}\|\leq1$, then the union of a zero set with a Blaschke sequence is again a zero set for $\mathscr{H}$.
\end{Theorem}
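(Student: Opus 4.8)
The plan is to produce the combined sequence as a zero set \emph{directly}, by multiplying a witness function for the given zero set by the Blaschke product whose zeros are the prescribed Blaschke sequence; the whole point of the hypothesis $\|S_{\mathscr{H}}\|\leq 1$ is that it forces this multiplication to be a bounded (indeed contractive) operation on $\mathscr{H}$. So let $A=(a_j)_{j\geq 1}$ be a zero set for $\mathscr{H}$, with witness $g\in\mathscr{H}\setminus\{0\}$ satisfying $Z(g)\supseteq A$, and let $B=(b_j)_{j\geq 1}\subset\mathbb{D}$ be a Blaschke sequence, i.e.\ $\sum_j(1-|b_j|)<\infty$. Let $b$ be the Blaschke product with zero sequence $B$ (with a factor $z$ for each $b_j=0$), and let $b_N$ be its $N$th partial product; recall the standard facts that $b\in H^\infty$, $\|b\|_\infty\leq 1$, $Z(b)=B$, and $b_N\to b$ uniformly on compact subsets of $\mathbb{D}$.

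The first and main step is to show $b_N g\in\mathscr{H}$ with $\|b_N g\|\leq\|g\|$ for every $N$. For $a\in\mathbb{D}$ write $\varphi_a(z)=(a-z)/(1-\overline a z)$. Since $\|\overline a S_{\mathscr{H}}\|=|a|<1$, the operator $R_a:=(I-\overline a S_{\mathscr{H}})^{-1}$ exists and is bounded (Neumann series), and a brief check shows $\varphi_a(S_{\mathscr{H}}):=(aI-S_{\mathscr{H}})R_a$ is precisely multiplication by $\varphi_a$, hence maps $\mathscr{H}$ into $\mathscr{H}$. Moreover the identity $I-\varphi_a(S_{\mathscr{H}})^{*}\varphi_a(S_{\mathscr{H}})=(1-|a|^2)\,R_a^{*}(I-S_{\mathscr{H}}^{*}S_{\mathscr{H}})R_a$ holds, and its right-hand side is positive because $\|S_{\mathscr{H}}\|\leq 1$; thus $\|\varphi_a(S_{\mathscr{H}})\|\leq 1$. (This is nothing but von Neumann's inequality for the contraction $S_{\mathscr{H}}$, applied to a Blaschke factor.) Since $b_N$ is a finite product of factors $\varphi_{b_j}$, unimodular constants, and possibly the monomial $z$ (multiplication by which is $S_{\mathscr{H}}$, also a contraction), multiplication by $b_N$ is a composition of contractions on $\mathscr{H}$. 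Hence $b_N g\in\mathscr{H}$ and $\|b_N g\|\leq\|g\|$.

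Next I would pass to the limit. The sequence $(b_N g)_N$ is bounded in $\mathscr{H}$, so a subsequence $b_{N_k}g$ converges weakly to some $h\in\mathscr{H}$. By property \eqref{P1}, for each $z\in\mathbb{D}$ the evaluation functional $f\mapsto f(z)=\langle f,k_z\rangle$ is weakly continuous, so $h(z)=\lim_k (b_{N_k}g)(z)=\lim_k b_{N_k}(z)g(z)=b(z)g(z)$, the last equality from local uniform convergence of $b_N$ to $b$. Therefore $h=bg$, so $bg\in\mathscr{H}$. Since $b$ and $g$ are nonzero analytic functions on $\mathbb{D}$ we have $bg\in\mathscr{H}\setminus\{0\}$, and at each $w\in\mathbb{D}$ the order of vanishing of $bg$ is $\operatorname{ord}_w(b)+\operatorname{ord}_w(g)$, which is at least $(\text{multiplicity of }w\text{ in }B)+(\text{multiplicity of }w\text{ in }A)$, i.e.\ the multiplicity of $w$ in the combined sequence $A\cup B$. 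Hence $Z(bg)\supseteq A\cup B$, so $A\cup B$ is a zero set for $\mathscr{H}$.

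I expect the step requiring the hypothesis — and hence the crux — to be the contractivity of multiplication by finite Blaschke products (the von Neumann inequality), which is exactly what makes the weak-limit argument yield an honest element $bg$ of $\mathscr{H}$ rather than merely a function analytic on $\mathbb{D}$. Correspondingly, when $\|S_{\mathscr{H}}\|>1$ — for instance for the Bergman or Dirichlet shifts — infinite Blaschke products need not be multipliers, so one should not expect the statement to survive without the norm bound.
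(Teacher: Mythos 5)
Your proof is correct, but it takes a genuinely different route from the paper's. The paper stays entirely inside its $S_{\mathscr{H}}$-inner-function framework: for an inner $J$ and a polynomial $F$ it establishes $\|JF\|^2\leq\|J\|^2\sum_k|F_k|^2$ by iterating the Pythagorean theorem together with $\|S_{\mathscr{H}}\|\leq1$, applies this (after a tacit polynomial-approximation step) to the normalized Blaschke factor $F(z)=\tfrac{1}{w}\tfrac{w-z}{1-\overline{w}z}$ of $H^2$-norm $1/|w|$, and iterates to get $\|J_{W\cup\{w_1,\ldots,w_m\}}\|\leq\|J_W\|/|w_1\cdots w_m|$; the Blaschke condition keeps $\prod|w_j|$ bounded away from $0$, and Theorem \ref{v98u24tprefewq23re} converts the resulting bound $\sup_n\|J_n\|<\infty$ into the zero-set conclusion. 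You instead bypass the inner-function machinery altogether: the operator identity
\[
I-\varphi_a(S_{\mathscr{H}})^{*}\varphi_a(S_{\mathscr{H}})=(1-|a|^2)\,R_a^{*}\,(I-S_{\mathscr{H}}^{*}S_{\mathscr{H}})\,R_a
\]
(which is correct, and whose right-hand side is positive exactly because $\|S_{\mathscr{H}}\|\leq1$) shows each Blaschke factor is a contractive multiplier, hence so is each finite Blaschke product, and a weak-compactness argument plus boundedness of point evaluations \eqref{P1} identifies the weak limit of $b_Ng$ as $bg\in\mathscr{H}$. Your approach is more self-contained and actually proves something slightly stronger — that the full Blaschke product $b$ multiplies $\mathscr{H}$ into itself contractively ($\|bg\|\leq\liminf\|b_{N_k}g\|\leq\|g\|$ by weak lower semicontinuity) — whereas the paper's argument is shorter given the machinery already built and yields the quantitative estimate on $\|J_n\|$ that plugs directly into its zero-set criterion. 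The only places to be careful in your write-up are minor: you should note that $(I-\overline{a}S_{\mathscr{H}})^{-1}f$ really is the function $f(z)/(1-\overline{a}z)$ (the Neumann series converges in norm and point evaluations are continuous, so the sum is identified pointwise), and that the phrase ``von Neumann's inequality'' is shorthand for the explicit identity you give, since the inequality proper is stated for polynomials. Neither affects correctness.
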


\begin{proof}  
For notational convenience let $S = S_{\mathscr{H}}$. 
First, suppose that $J$ is $S$-inner, and $w \in \D \setminus \{0\}$.  
Since $J$ is $S$-inner, we have $J \perp S^k J$ for all $k\geq 1$.  Let  
$$F(z) = \sum_{k=0}^{d} F_k z^k$$ be any polynomial with $F_0 = 1$.
By the linearity of $\perp$ (in the second slot) in a Hilbert space, and the Pythagorean Theorem,
\begin{align*}
     \| JF\|^2  &= \| J +  F_1 SJ + F_2 S^2 J + \cdots + F_d S^d J \|^2  \\
       &= \|J\|^2 + \| F_1 SJ + F_2 S^2 J + \cdots + F_d S^d J\|^2\\
       &\leq \|J\|^2  +  \|S\|^2 \| F_1 J + F_2 S  J + \cdots + F_d S^{d-1} J\|^2\\
       &\leq \|J\|^2  +  \|S\|^2 | F_1|^2 \| J\|^2 + \|S\|^2 \| F_2 S  J + \cdots+ F_d S^{d-1} J \|^2\\
       &\leq \cdots\\
       &\leq \|J\|^2 \big( 1 + |F_1|^2 \|S\|^2 + |F_2|^2 \|S\|^{2\cdot 2} + \cdots  + |F_d|^2 \|S\|^{2\cdot d} \big)\\
       &= \|J\|^2 \big( 1 + |F_1|^2  + |F_2|^2  + \cdots  + |F_d|^2  \big).
\end{align*}
The final expression in parentheses is the square of the norm in $\ell^2_A$ of $F(z)$.  The inequality remains true if $F$ is the Blaschke factor that vanishes at $w$, normalized so that $F(0)=1$, i.e., 
$$F(z) = \frac{1}{w} \frac{w - z}{1 - \overline{w} z}.$$
 This function has norm in $\ell^2_A = H^2$ given by $1/|w|$.

Now let $W$ be any zero set for $\mathscr{H}$, and let $\{w_1, w_2, w_3,\ldots\} \in \D \setminus \{0\}$.  Let $J_W$ be the $S$-inner function associated with $W$ with $J(0) = 1$, i.e., $J = f - \widehat{f}$, where $f \in \mathscr{H}$ and $f$ has zeros $W$ (according to multiplicity) and $f(0) = 1$. By repeated application of the above argument, we find that
\[
      \|J_{W \cup \{w_1, w_2,\ldots,w_m\}} \| \leq \frac{\|J_W\| }{|w_1 w_2\cdots w_m|}, \quad m \geq 1.
\]
This, in conjunction with Theorem \ref{v98u24tprefewq23re}, proves the assertion. 
\end{proof}

\begin{Theorem}
Suppose that $\mathscr{H}$ is a RKHS of analytic functions on $\mathbb{D}$ satisfying conditions \eqref{P1}--\eqref{P4}.   If $\|Q_0\| =1$, $J$ is an $S$-inner function with zero set  $W$, and $f \in [J] \setminus \{0\}$, then the zero set for $f$ is the union of $W$ and a Blaschke sequence.
\end{Theorem}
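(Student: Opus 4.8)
The plan is to recover, as far as possible, the classical $H^2$ picture: there $[J]=JH^2$, every $f\in[J]$ factors as $f=Jh$ with $h\in H^2$, and so the zeros of $f$ are exactly those of $J$ together with the Blaschke zeros of $h$. The key observation I would use is that $\|Q_0\|=1$ makes the shift \emph{expansive}: since $Q_0(zg)=g$ for every $g\in\mathscr{H}$, the bound $\|Q_0\|\le 1$ forces $\|S v\|\ge\|v\|$ for all $v\in\mathscr{H}$. In particular $S$ is injective with closed range, and it carries closed subspaces to closed subspaces. This expansiveness (rather than mere boundedness) of $S$ is exactly what lets one run a ``Taylor expansion along $S$'' inside $[J]$ and pull out an honest $H^2$ multiplier, even though $\mathscr{H}$ itself need neither embed in $H^2$ nor have Blaschke zero sets.

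Concretely, I would first note that because $J$ is $S$-inner we have $J\perp[SJ]$, so $[J]=\mathbb{C}J\oplus[SJ]$ is an orthogonal decomposition, and that $[SJ]=S[J]$ (each $S^{n}J$ with $n\ge1$ lies in the closed subspace $S[J]$, and conversely). Now let $f\in[J]\setminus\{0\}$. Put $f_0=f$ and, having $f_k\in[J]$, split it orthogonally as $f_k=c_kJ+S f_{k+1}$ with $c_k\in\mathbb{C}$ and $f_{k+1}\in[J]$ uniquely determined by injectivity of $S$. Orthogonality together with expansiveness gives $\|f_k\|^2=|c_k|^2\|J\|^2+\|S f_{k+1}\|^2\ge|c_k|^2\|J\|^2+\|f_{k+1}\|^2$, which telescopes to $\|f\|^2\ge\|J\|^2\sum_{k=0}^{n-1}|c_k|^2+\|f_n\|^2$ for every $n$; hence $\sum_{k\ge0}|c_k|^2\le\|f\|^2/\|J\|^2<\infty$ and $\|f_n\|\le\|f\|$ for all $n$. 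Iterating the relation $f_k=c_kJ+zf_{k+1}$ yields the algebraic identity $f=\bigl(\sum_{k=0}^{n-1}c_kz^k\bigr)J+z^n f_n$ on $\mathbb{D}$, and the reproducing kernel estimate $|z^nf_n(z)|\le|z|^n\|f_n\|\,\|k_z\|\le|z|^n\|f\|\,\|k_z\|\to0$ for each fixed $z\in\mathbb{D}$ shows, on letting $n\to\infty$, that $f=hJ$ pointwise on $\mathbb{D}$, where $h(z):=\sum_{k\ge0}c_kz^k$.

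To finish, $h$ has square-summable Taylor coefficients, so $h\in H^2$, and $h\not\equiv0$ (otherwise $f\equiv0$). Nonzero $H^2$ functions have Blaschke zero sequences, and orders of vanishing add under products, so $Z(f)=Z(h)\cup Z(J)=B\cup W$ with $B:=Z(h)$ a Blaschke sequence, which is the assertion. The step I expect to demand the most care is the identification $f=hJ$: one must justify rigorously that $[J]=\mathbb{C}J\oplus[SJ]$ and $[SJ]=S[J]$ genuinely hold (this is precisely where expansiveness of $S$, not just its boundedness, is used) and that the pointwise limit recovers an $H^2$ function whose zero multiplicities are correct at \emph{every} point of $\mathbb{D}$, including the origin --- a case the argument handles automatically, since it never needs the coefficients $c_k$ to be interpreted as Taylor coefficients of $f/J$.
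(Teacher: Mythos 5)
Your proof is correct, and it reaches the factorization $f = Jh$ with $h \in H^2$ by a genuinely different route from the paper, although both rest on the same two facts you isolate: $J \perp S^k J$ and the expansiveness $\|Sg\| \ge \|g\|$ forced by $\|Q_0\|=1$. The paper works with polynomials: for $f \in [J]$ it takes polynomials $F_m$ with $JF_m \to f$, peels off one term at a time to get the coefficient inequality $\|JF\|^2 \ge \|J\|^2 \sum_k |F_k|^2$, and then (implicitly, via applying this to differences $F_m - F_{m'}$) concludes that the $F_m$ form a Cauchy sequence in $H^2$ whose limit is the multiplier $h$. You instead build the Taylor coefficients of $h$ intrinsically through the Wold-type splitting $[J] = \mathbb{C}J \oplus S[J]$, which requires the additional (and correctly justified) observations that $[SJ] = S[J]$ and that $S$ has closed range because it is bounded below; the identity $f = hJ$ then comes from the pointwise remainder estimate $|z^n f_n(z)| \le |z|^n \|f\|\,\|k_z\| \to 0$ rather than from norm convergence of $JF_m$. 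Your version buys a more self-contained argument --- it avoids the approximating polynomials and makes explicit the Cauchy/limit step that the paper compresses into ``this tells us that $f$ is the product of $J$ and a function in $H^2$'' --- while the paper's version buys brevity and produces the reusable inequality $\|JF\|^2 \ge \|J\|^2 \|F\|^2_{\ell^2_A}$, the mirror image of the one driving Theorem \ref{pppxixixi777}. Both arguments finish identically: $h \in H^2 \setminus \{0\}$, so $Z(h)$ is a Blaschke sequence and $Z(f) = Z(J) \cup Z(h)$ with orders of vanishing adding at every point of $\mathbb{D}$.
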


\begin{proof}  
For any $g \in \mathscr{H}$ observe that 
$$g = Q_{0} S g$$ and so, since $\|Q_{0}\| = 1$ by assumption, 
$$\|g\| \leq \|Q_{0}\| \|S g\| = \|S g\|.$$
Apply this identity $k$ times to get 
\begin{equation}\label{1563rtewyufgdhbjk}
\|S^{k} g\| \geq \|g\|, \quad k \geq 1.
\end{equation}
 Suppose that $f \in [J]$. 
By the inner property of $J$, along with repeated use of \eqref{1563rtewyufgdhbjk}, 
\begin{align*}
     \| J F \|^2 &= \| J F_0 \|^2  +  \| S J F_1 + S^2 J F_2 + \cdots \|^2\\
     & = \| J F_0 \|^2  +  \| S (J F_1 + S J F_2 + \cdots) \|^2\\
     & \geq  \| J F_0 \|^2 + \|J F_1\|^2 + \|S J F_2 + \cdots) \|^2\\
      & \vdots\\
    &=  \|J\|^2 \big( {|F_0|^2} + {|F_1|^2} + {|F_2|^2} + \cdots \big)
\end{align*}
for any polynomial $F$.  The bound is true for any sequence of polynomials $F_m$ such that $JF_m$ tends to $f$ in $\mathscr{H}$.  This tells us that $f$ is the product of $J$ and a function in $H^2$.  The claim follows.
\end{proof}

%%%%%%%%%%%%%%%%%%%%%%%%%%%%%%%%%%%%%%%%%%%%%%%%%%%%%%%%%%%%%%%%%%%%%%%%
%%%%%%%%%%%%%%%%%%%%%%%%%%%%%%%%%%%%%%%%%%%%%%%%%%%%%%%%%%%%%%%%%%%%%%%%
%%%%%%%%%%%%%%%%%%%%%%%%%%%%%%%%%%%%%%%%%%%%%%%%%%%%%%%%%%%%%%%%%%%%%%%%

\section{Zeros of $S$-inner functions}\label{mzkqwwowowo}

In the Hardy space $H^2$, we know that when $f \in H^2 \setminus \{0\}$, the classical inner part $I_{f}$ of $f$  takes the form $I_{f} = B S_{\mu}$, where $B$ is the Blaschke product and $S_{\mu}$ is an inner function. The Blaschke factor contains all the zeros of $f$ in $\D$ (and no others) while the inner factor $S_{\mu}$ has no zeros in $\mathbb{D}$. This means that the inner factor $I_f$ has {\em precisely} the same zeros as $f$ (counting multiplicity). How ubiquitous is this phenomenon? In other words, if $\mathscr{H}$ is a Hilbert space of analytic functions satisfying conditions \eqref{P1} - \eqref{P4} and $f \in \mathscr{H}\setminus \{0\}$, does the $S_{\mathscr{H}}$-inner function 
$$J = f - P_{[z f]} f$$ have any ``extra'' zeros inside $\D$? Certainly $J$ has {\em at least} the zeros of $f$. Does it have any others? A result of Hedenmalm and Zhu show that in the weighted Bergman space of analytic functions $f$ on $\D$ for which $f \in L^2((1 - |z|^2)^{\alpha} dA)$, where $dA$ is planar Lebesgue measure, it is possible, when $\alpha > 4$, for the inner function $J$ corresponding to the linear function $f(z) = 1 - z/w$ to have an extra zero in $\D$. So, indeed, the ``no extra zeros'' property for $S_{\mathscr{H}}$-inner functions is not ubiquitous. In this section we obtain lower bounds for these extra zeros and show that they must lie somewhat close to the boundary. Moreover, we will see that in some situations such extra zeros do not exist at all. 

From condition \eqref{P4},  we know that for each $w \in \Omega$, the operator 
$$Q_{w}: \mathscr{H} \to \mathscr{H}, \quad Q_{w} f(z) = \frac{f(z) - f(w)}{z - w}$$ is well defined and continuous. Our criterion that the $S_{\mathscr{H}}$-inner function $J$ has no extra zeros will be stated in terms of the norm of the operator $Q_0$. This operator 
$$(Q_{0} f)(z) = \frac{f(z) - f(0)}{z}$$ is often called the {\em backward shift operator} since if $\Omega = \D$, then $Q_{0}$ acts on the Taylor series of $f$ (about the origin) by shifting all of the coefficients backwards and dropping the constant term, i.e., 
$$Q_{0}(a_0 + a_1 z + a_2 z^2 + \cdots) = a_{1} + a_{2} z + a_{3} z^2 + \cdots.$$

\begin{Theorem}\label{extra1}
   Let $f \in \mathscr{H} \setminus \{0\}$, and let $J = f - P_{[z f]} f$ be the $S_{\mathscr{H}}$-inner function corresponding to $f$.  If $w \in \Omega \setminus \{0\}$ is a zero of $J$ that is not a zero of $f$, then 
\[
     |w| \geq \frac{[1+\|S_{\mathscr{H}}\|^2\|Q_w\|^2]^{1/2}}{\|Q_0\|\|S_{\mathscr{H}}\|\|Q_w\|}.
\]
\end{Theorem}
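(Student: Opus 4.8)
Write $S=S_{\mathscr{H}}$ and, as in the statement, $h:=Q_wJ$. The whole argument hinges on one structural lemma -- the place where the hypothesis $f(w)\neq 0$ actually does something -- namely that
\[
h=Q_wJ\in[f]:=\bigvee\{S^jf:j\ge 0\}.
\]
Granting this, $Sh\in S[f]\subseteq[zf]$, and since $J=f-P_{[zf]}f\perp[zf]$ we get $\langle J,Sh\rangle=0$; the rest is bookkeeping with the three norm inequalities $\|Sv\|\le\|S\|\,\|v\|$, $\|v\|\le\|Q_0\|\,\|Sv\|$ (because $v=Q_0Sv$) and $\|v\|\le\|Q_w\|\,\|(S-wI)v\|$ (because $v=Q_w(S-wI)v$).

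\emph{Proof of the lemma.} Since $J=f-P_{[zf]}f$ with $[zf]\subseteq[f]$ we have $J\in[f]$, and since $J(w)=0$, property \eqref{P4} gives $h=Q_wJ=J/(z-w)\in\mathscr{H}$ and $J=(S-wI)h$. By definition of $[f]$ pick polynomials $p_n$ with $p_nf\to J$ in $\mathscr{H}$. Continuity of evaluation at $w$ (property \eqref{P1}) gives $p_n(w)f(w)=(p_nf)(w)\to J(w)=0$, hence $p_n(w)\to 0$ \emph{because $f(w)\neq 0$}; this is the only use of the extra-zero hypothesis. From the elementary identity $Q_w(p_nf)=(Q_wp_n)\,f+p_n(w)\,(Q_wf)$ and continuity of $Q_w$ we obtain $(Q_wp_n)\,f=Q_w(p_nf)-p_n(w)(Q_wf)\to Q_wJ=h$. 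Each $(Q_wp_n)f$ lies in $[f]$ (a polynomial times $f$) and $[f]$ is closed, so $h\in[f]$, and therefore $\langle J,Sh\rangle=0$ as explained above.

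\emph{The norm identity and the squeeze.} From $J=Sh-wh$ and $\langle Sh,J\rangle=0$ we get $w\langle h,J\rangle=\langle Sh-J,J\rangle=-\|J\|^2$, i.e.\ $\langle J,h\rangle=-\|J\|^2/\overline w$; substituting into $\|Sh\|^2=\|J+wh\|^2=\|J\|^2+2\,\mathrm{Re}(\overline w\langle J,h\rangle)+|w|^2\|h\|^2$ yields
\[
\|Sh\|^2=|w|^2\|h\|^2-\|J\|^2 .
\]
Now bound $\|h\|^2$ from below: $Sh$ vanishes at $0$, so $h=Q_0(Sh)$ and $\|Sh\|\ge\|h\|/\|Q_0\|$; feeding this into the identity gives $\|h\|^2(|w|^2-\|Q_0\|^{-2})\ge\|J\|^2>0$ (note $J\neq 0$), which already forces $|w|>\|Q_0\|^{-1}$ and gives $\|h\|^2\ge\|J\|^2/(|w|^2-\|Q_0\|^{-2})$. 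Bound $\|h\|^2$ from above: $SJ$ vanishes at $w$ and $Q_w(SJ)=(zJ)/(z-w)=Sh$, so $\|Sh\|\le\|Q_w\|\,\|SJ\|\le\|Q_w\|\,\|S\|\,\|J\|$, and the identity gives $|w|^2\|h\|^2=\|J\|^2+\|Sh\|^2\le\|J\|^2(1+\|S\|^2\|Q_w\|^2)$.

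\emph{Conclusion.} Combining the two estimates and cancelling $\|J\|^2$,
\[
\frac{1}{|w|^2-\|Q_0\|^{-2}}\ \le\ \frac{1+\|S\|^2\|Q_w\|^2}{|w|^2},
\]
and solving this linear inequality for $|w|^2$ (using $1+\|S\|^2\|Q_w\|^2>1$) gives $|w|^2\ge(1+\|S\|^2\|Q_w\|^2)/(\|Q_0\|^2\|S\|^2\|Q_w\|^2)$, which is exactly the asserted bound. I expect the lemma $h\in[f]$ -- together with the realization that it is precisely here that $f(w)\neq0$ is indispensable -- to be the main point; everything after it is routine estimation.
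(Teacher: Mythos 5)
Your proposal is correct and follows essentially the same route as the paper: your key lemma that $Q_wJ\in[f]$ (with $f(w)\neq 0$ used exactly to force $p_n(w)\to 0$) is the paper's Proposition \ref{324328t343refd} with the same proof, and your norm identity $|w|^2\|h\|^2=\|J\|^2+\|Sh\|^2$ together with the bounds $h=Q_0(Sh)$ and $Sh=Q_w(SJ)$ reproduces the paper's chain of inequalities after the harmless renormalization $J/(1-z/w)=-w\,Q_wJ$.
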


Towards the proof of this theorem, we start with the following. 

\begin{Proposition}\label{324328t343refd}
Let $f \in \mathscr{H} \setminus \{0\}$ and let $J = f - P_{[z f]} f$.  If $w$ is a zero of $J$ that is not a zero of $f$, then 
$Q_{w} J \in [f]$.
\end{Proposition}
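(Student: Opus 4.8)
The plan is to exploit the fact that $J$ lies in $[f]$ — indeed $f\in[f]$ and $P_{[zf]}f\in[zf]\subseteq[f]$, so $J=f-P_{[zf]}f\in[f]=\overline{\{p(S_{\mathscr{H}})f:p\text{ a polynomial}\}}$ — and thus to write $J=\lim_n p_n(S_{\mathscr{H}})f$ for some sequence of polynomials $p_n$. I would then apply the (continuous) operator $Q_w$ to this approximation and identify the limit. Since $[f]$ is closed, it suffices to exhibit $Q_wJ$ as a limit of vectors of the form $q(S_{\mathscr{H}})f$ with $q$ a polynomial.

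The computational heart of the argument is the elementary identity, valid for every polynomial $p$,
\[
   Q_w\big(p(S_{\mathscr{H}})f\big)=(Q_wp)(S_{\mathscr{H}})f+p(w)\,Q_wf ,
\]
where $Q_wp$ denotes the polynomial $z\mapsto (p(z)-p(w))/(z-w)$; this follows by writing $p(z)f(z)-p(w)f(w)=(p(z)-p(w))f(z)+p(w)(f(z)-f(w))$ and dividing by $z-w$. The first term on the right lies in $[f]$, while the second is a scalar multiple of the fixed vector $Q_wf\in\mathscr{H}$ (well defined by \eqref{P4}).

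It remains to control the scalars $p_n(w)$ along the approximating sequence. Since point evaluation at $w$ is continuous on $\mathscr{H}$ by \eqref{P1}, we have $p_n(w)f(w)=\big(p_n(S_{\mathscr{H}})f\big)(w)\to J(w)$, and this is precisely where the hypothesis enters: because $J(w)=0$ while $f(w)\neq 0$, we conclude $p_n(w)\to 0$, hence $p_n(w)\,Q_wf\to 0$ in $\mathscr{H}$. Applying $Q_w$ (continuous, by \eqref{P4} together with the closed graph theorem) to $p_n(S_{\mathscr{H}})f\to J$ and invoking the displayed identity then yields $(Q_wp_n)(S_{\mathscr{H}})f\to Q_wJ$; as each term lies in the closed subspace $[f]$, we get $Q_wJ\in[f]$.

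The only delicate point is that the ``error term'' $p_n(w)\,Q_wf$ must be shown to vanish in the limit, and this is exactly what forces the condition $f(w)\neq 0$: if $w$ were a zero of $f$, the conclusion generally fails (for instance in $H^2$, where $Q_wf=f/(z-w)$ generates a strictly larger $S$-invariant subspace than $f$). Everything else is routine, relying only on the continuity of $Q_w$ and of point evaluations guaranteed by \eqref{P1} and \eqref{P4}.
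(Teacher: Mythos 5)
Your proposal is correct and follows essentially the same route as the paper's proof: approximate $J$ by $p_n(S_{\mathscr{H}})f$, split $Q_w(p_nf)$ into $(Q_wp_n)(S_{\mathscr{H}})f + p_n(w)Q_wf$, and use continuity of point evaluation together with $J(w)=0$, $f(w)\neq 0$ to kill the error term. The only cosmetic difference is that you package the decomposition as a clean operator identity and spell out the continuity of $Q_w$ via the closed graph theorem, which the paper leaves implicit.
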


\begin{proof}
By hypothesis, there are polynomials $\phi_n$ such that $\phi_n f$ converges in norm to $J$.  It follows that $Q_w (\phi_n f)$ converges in norm to $Q_w J$, i.e.,
\[
   \frac{\phi_n(z)f(z) - \phi_n(w)f(w)}{z-w} \longrightarrow Q_w J.
\]
Since evaluation at $w$ is bounded, we may further conclude that \[\phi_n(w)f(w) \rightarrow J(w) = 0,\] and hence $Q_w J $ is the limit in norm of
\begin{align*}
    &\quad \frac{\phi_n(z)f(z)-\phi_n(w)f(w)}{z-w}\\
    &=  \frac{\phi_n(z)f(z)-\phi_n(w)f(z)+ \phi_n(w)f(z) -\phi_n(w)f(w)}{z-w} \\
    &=  \frac{\phi_n(z)-\phi_n(w)}{z-w}f(z)+ \frac{f(z) -f(w)}{z-w}\phi_n(w).
\end{align*}
The last term above tends to zero which says that $Q_w J \in [f]$.
\end{proof}

\begin{proof}[Proof of Theorem \ref{extra1}]
Observe that 
\begin{align*}
    \left\|  \frac{J(z)}{1 - \frac{z}{w}} \right\|^2 &=   \left\|  \frac{J(z)}{1 - \frac{z}{w}} \Big( 1 - \frac{z}{w} + \frac{z}{w}  \Big)   \right\|^2   \\
    &=   \left\|  {J(z)}+    \frac{z}{w} \frac{J(z)}{1 - \frac{z}{w}}   \right\|^2.
\end{align*}
Now apply Proposition \ref{324328t343refd} and the Pythagorean Theorem to get
\begin{align*}
    \left\|  \frac{J(z)}{1 - \frac{z}{w}} \right\|^2  &=   \left\|  {J(z)}\right\|^2  +   \left\| \frac{z}{w} \frac{J(z)}{1 - \frac{z}{w}}   \right\|^2   \\
     \left\|  Q_0\Big(\frac{ zJ(z)}{1 - \frac{z}{w}}\Big) \right\|^2  &=   \left\|  {J(z)}\right\|^2  +  \frac{1}{|w|^2} \left\|  \frac{zJ(z)}{1 - \frac{z}{w}}   \right\|^2   \\
      \Big(\|Q_0\|^2 - \frac{1}{|w|^2}\Big) \left\|  \frac{zJ(z)}{1 - \frac{z}{w}} \right\|^2  &\geq   \left\|  {J(z)}\right\|^2   \\
      \Big(\|Q_0\|^2 - \frac{1}{|w|^2}\Big)   \|S_{\mathscr{H}}\|^2 \|Q_w\|^2 |w|^2 \|J(z)\|^2  &\geq   \left\|  {J(z)}\right\|^2   \\
       \Big(\|Q_0\|^2 - \frac{1}{|w|^2}\Big)   \|S_{\mathscr{H}}\|^2 \|Q_w\|^2 |w|^2   &\geq   1   \\
        |w|^2 \geq \frac{1+\|S_{\mathscr{H}}\|^2\|Q_w\|^2}{\|Q_0\|^2\|S_{\mathscr{H}}\|^2\|Q_w\|^2}. \quad \qedhere
\end{align*}
\end{proof}

As a corollary to this theorem we note that if $Q_0$ is contractive, and $\Omega = \mathbb{D}$, then $J$ will have no extra zeros. 

\begin{Corollary}\label{spoifpof}  Let $\mathscr{H}$ be a RKHS of analytic functions on $\mathbb{D}$.
If $Q_0$ is contractive, then the $S_{\mathscr{H}}$-inner function $J$ corresponding to $f$ will have no extra zeros. 
\end{Corollary}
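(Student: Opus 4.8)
The plan is to derive Corollary \ref{spoifpof} directly from Theorem \ref{extra1}, exploiting the fact that $\|Q_0\|\leq 1$ forces the lower bound on $|w|$ to exceed $1$, which is impossible for a zero $w\in\mathbb{D}$. First I would suppose, for contradiction, that the $S_{\mathscr{H}}$-inner function $J$ associated with $f$ has an extra zero $w\in\mathbb{D}\setminus\{0\}$, i.e.\ $J(w)=0$ but $f(w)\neq 0$. (The case $w=0$ is vacuous here, since $J(0)=f(0)-(P_{[zf]}f)(0)$ and $P_{[zf]}f\in[zf]$ vanishes at $0$, so if $f(0)\neq 0$ then $J(0)\neq 0$ and $0$ is never an extra zero; and if $f(0)=0$ then $0$ is not an extra zero either.) Then Theorem \ref{extra1} applies and gives
\[
   |w| \;\geq\; \frac{[1+\|S_{\mathscr{H}}\|^2\|Q_w\|^2]^{1/2}}{\|Q_0\|\,\|S_{\mathscr{H}}\|\,\|Q_w\|}.
\]

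Next I would simply estimate the right-hand side from below. Write $a=\|S_{\mathscr{H}}\|\|Q_w\|>0$; the bound reads $|w|\geq (1+a^2)^{1/2}/(\|Q_0\|\,a)$. Since $(1+a^2)^{1/2}>a$ (strictly, as $a>0$), we get $(1+a^2)^{1/2}/a>1$, and dividing by $\|Q_0\|\leq 1$ only makes the quotient larger, so $|w|>1$. This contradicts $w\in\mathbb{D}$, and hence no such extra zero exists. I should note that $\|Q_w\|\neq 0$: indeed $Q_w$ is injective (if $Q_w g=0$ then $g$ is constant, and applying $Q_w$ again, $g\equiv g(w)$ with $g-g(w)=0$, so $g$ is a constant, which is in $\mathscr{H}$ and nonzero unless $g=0$) so $\|Q_w\|>0$, and likewise $\|S_{\mathscr{H}}\|>0$ since $S_{\mathscr{H}}1=z\neq 0$; this keeps the denominator nonzero and the argument legitimate.

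There is essentially no obstacle here — the corollary is a one-line consequence of the displayed inequality in Theorem \ref{extra1} together with the elementary fact $(1+a^2)^{1/2}>a$. The only things to be careful about are: (i) confirming that $0$ itself can never be an "extra zero," so that restricting to $w\in\mathbb{D}\setminus\{0\}$ loses nothing; and (ii) verifying the denominator in the bound is genuinely nonzero, i.e.\ $\|Q_w\|,\|S_{\mathscr{H}}\|>0$, so the inequality of Theorem \ref{extra1} is not vacuous. Both are immediate from the structure of the spaces in question (they contain the constants and $z$, by \eqref{P3}).
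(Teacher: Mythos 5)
Your proof is correct and follows essentially the same route as the paper: apply the lower bound from Theorem \ref{extra1} and observe that $\|Q_0\|\leq 1$ together with $(1+a^2)^{1/2}>a$ forces any extra zero to satisfy $|w|\geq 1$, impossible in $\mathbb{D}$. The added checks that $w=0$ is never an extra zero and that $\|S_{\mathscr{H}}\|,\|Q_w\|>0$ are sensible bits of care but do not change the argument.
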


\begin{proof}
If $\|Q_0\| \leq 1$, then
$$\frac{[1+\|S_{\mathscr{H}}\|^2\|Q_w\|^2]^{1/2}}{\|Q_0\|\|S_{\mathscr{H}}\|\|Q_w\|} \geq \frac{[1+\|S_{\mathscr{H}}\|^2\|Q_w\|^2]^{1/2}}{\|S_{\mathscr{H}}\|\|Q_w\|} \geq 1.$$
By Theorem \ref{extra1}, any extra zero $w \in \D$ must satisfy $|w| \geq 1$. 
\end{proof}

It is easy to see that for the Hardy space $H^2$, the operator $Q_{0}$ (which is just the well-known backward shift operator) satisfies $\|Q_{0}\| = 1$ and so the $S$-inner function $J$ of corresponding to $f$, which in this case is the classical inner factor of $f$, never has  extra zeros. Slightly more work is that on the Dirichlet space $\mathscr{D}$ (See Example \ref{00uuucbbjbbjb}), the operator $Q_0$ also has norm equal to one \cite{MR0358396}. This gives us the following. 

\begin{Corollary}\label{77w9e8r7ew98}
For any $f \in \mathscr{D}$, the corresponding $S_{\mathscr{D}}$-inner function $J$ has no extra zeros in $\D$. 
\end{Corollary}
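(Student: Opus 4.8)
The plan is to reduce everything to Corollary \ref{spoifpof}, which already says that if $\mathscr{H}$ is a RKHS of analytic functions on $\D$ (satisfying \eqref{P1}--\eqref{P4}) with $Q_0$ contractive, then the $S_{\mathscr{H}}$-inner function $J$ attached to any $f$ has no extra zeros. Thus the whole task is to verify the single numerical fact that the backward shift $Q_0$ acts as a contraction on the Dirichlet space $\mathscr{D}$.

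To do this I would first recall that $\mathscr{D}$ consists of the analytic functions $f(z) = \sum_{n\geq 0} a_n z^n$ on $\D$ with $\|f\|^2 = \sum_{n\geq 0}(n+1)|a_n|^2 < \infty$ (as in \eqref{nncowef}), and that, as noted in the discussion preceding Theorem \ref{extra1}, $Q_0$ acts on Taylor coefficients by $(Q_0 f)(z) = \sum_{n\geq 0} a_{n+1} z^n$. A one-line computation then gives
$$\|Q_0 f\|^2 = \sum_{n\geq 0}(n+1)|a_{n+1}|^2 = \sum_{m\geq 1} m\,|a_m|^2 \leq \sum_{m\geq 1}(m+1)|a_m|^2 \leq \|f\|^2,$$
so $\|Q_0\|\leq 1$. (Testing on $z^n$ and letting $n\to\infty$ shows $\|Q_0\|=1$, matching the assertion in the excerpt and \cite{MR0358396}, but only the contractivity is needed.) Feeding $\|Q_0\|\leq 1$ into Corollary \ref{spoifpof} immediately yields that every zero of $J$ in $\D$ is already a zero of $f$, i.e.\ $J$ has no extra zeros, which is the claim.

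There is essentially no obstacle here; the only point requiring a word of care is confirming that $\mathscr{D}$ really does satisfy hypotheses \eqref{P1}--\eqref{P4} with $\Omega = \D$, so that Theorem \ref{extra1} and hence Corollary \ref{spoifpof} are applicable — but this is routine and was already recorded in the remark following \eqref{P4}. Should one prefer to avoid even the short coefficient computation, one can instead simply cite \cite{MR0358396} for $\|Q_0\|_{\mathscr{D}}=1$ and conclude in the same way.
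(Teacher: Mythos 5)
Your proof is correct and follows the same route as the paper: establish that $Q_0$ is contractive on $\mathscr{D}$ and then invoke Corollary \ref{spoifpof}. The only difference is cosmetic — the paper cites \cite{MR0358396} for $\|Q_0\|=1$, whereas you supply the (correct) one-line coefficient computation yourself.
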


We point out here that this result, in a way, is known. As shown in \cite{MR1259923}, every shift invariant subspace $M$ of the Dirichlet space has the property that $M \ominus z M = \C \phi$ and this function generates $M$, in that $\bigvee\{\phi, z \phi, z^2 \phi, \ldots\} = M$. Applying this fact to a vector $f \in \mathscr{D}$ and $M = [f]$, we see that $[J] = [f]$ and so $J$ cannot have any extra zeros. 

For the Bergman space $\mathscr{B}$ from Example \ref{popoweripeo}, $Q_0$ has norm $\sqrt{2}$ and so we are unable to apply Corollary \ref{spoifpof}. However, it is known, for different reasons \cite{MR1440934}, that $J$ has no extra zeros. On the other hand, for the space $H^{2}_{1}$ from Example \ref{erewrwe}, one can quickly check (using power series) that $Q_{0}$ is contractive on $H^{2}_{1}$ and thus we have the following.

\begin{Corollary}\label{xhbmdfer4terweqwa}
For any $f \in H^{2}_{1}$, the corresponding inner function $J$ has no extra zeros in $\D$.
\end{Corollary}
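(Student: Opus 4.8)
The plan is to apply Corollary~\ref{spoifpof} to $\mathscr{H} = H^2_1$, so the entire content reduces to checking that the backward shift operator $Q_0$ is a contraction on $H^2_1$. First I would write a typical element of $H^2_1$ as $f(z) = \sum_{n \geq 0} a_n z^n$ with $\|f\|^2 = |a_0|^2 + \sum_{n \geq 1} n^2 |a_n|^2$, and recall from the definition that $H^2_1$ satisfies conditions \eqref{P1}--\eqref{P4}, in particular $Q_0$ is well defined and bounded on it. Since $(Q_0 f)(z) = \sum_{n \geq 0} a_{n+1} z^n$, I would compute
\[
\|Q_0 f\|^2 = |a_1|^2 + \sum_{n \geq 1} n^2 |a_{n+1}|^2 = |a_1|^2 + \sum_{m \geq 2} (m-1)^2 |a_m|^2.
\]

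Then I would compare this term-by-term with $\|f\|^2 = |a_0|^2 + |a_1|^2 + \sum_{m \geq 2} m^2 |a_m|^2$. The $|a_1|^2$ terms match, the $|a_0|^2$ term only helps, and $(m-1)^2 \leq m^2$ for every $m \geq 2$ (indeed for all $m \geq 1$), so each remaining term of $\|Q_0 f\|^2$ is dominated by the corresponding term of $\|f\|^2$. Hence $\|Q_0 f\| \leq \|f\|$ for all $f \in H^2_1$, i.e.\ $\|Q_0\| \leq 1$, so $Q_0$ is contractive. Applying Corollary~\ref{spoifpof} with $\Omega = \mathbb{D}$ then gives immediately that the $S_{H^2_1}$-inner function $J$ corresponding to any $f \in H^2_1 \setminus \{0\}$ has no extra zeros in $\mathbb{D}$, which is the assertion.

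There is essentially no obstacle here: the only thing to be slightly careful about is the indexing shift in the power series, making sure the constant coefficient of $Q_0 f$ (namely $a_1$) is weighted by $1$ in the $H^2_1$ norm rather than by a factor of $1^2 = 1$ coming from the sum — these coincide, so the bookkeeping is clean. One could equivalently phrase the computation via the weight sequence: $H^2_1$ is a weighted $\ell^2$ space with weights $(w_n)$, $w_0 = 1$, $w_n = n^2$ for $n \geq 1$, and $Q_0$ is a contraction precisely because $w_{n+1} \geq w_n$ for all $n \geq 0$ (here $1 = w_0 \leq w_1 = 1$ and $n^2 \leq (n+1)^2$), which is the standard monotonicity criterion for the backward shift to be contractive on a weighted Hardy space. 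Either presentation is a two-line verification.
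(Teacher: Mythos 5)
Your proof is correct and follows exactly the route the paper intends: the paper's justification is precisely the remark that "one can quickly check (using power series) that $Q_0$ is contractive on $H^2_1$," followed by an appeal to Corollary \ref{spoifpof}, and your term-by-term comparison of $\|Q_0 f\|^2$ with $\|f\|^2$ is that quick check carried out explicitly.
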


%%%%%%%%%%%%%%%%%%%%%%%%%%%%%%%%%%%%%%%%%%%%%%%%%%%%%%%%%%%%%%%%%%%%%%%%
%%%%%%%%%%%%%%%%%%%%%%%%%%%%%%%%%%%%%%%%%%%%%%%%%%%%%%%%%%%%%%%%%%%%%%%%
%%%%%%%%%%%%%%%%%%%%%%%%%%%%%%%%%%%%%%%%%%%%%%%%%%%%%%%%%%%%%%%%%%%%%%%%

\section{Extra Zeros Abound}\label{ExtraZ}

In the previous section it was shown that if an $S$-inner function $J$ corresponding to a given function $f$ has extra zeros, then those extra zeros must be bounded away from the origin.  When $\Omega = \mathbb{D}$, this gave rise to a sufficient condition on the space $\mathscr{H}$ for the $S$-inner functions to have no extra zeros.  In the present section we shall see that extra zeros are nonetheless quite abundant.  A large class of spaces will be constructed for which certain $S$-inner functions will have extra zeros.   

We begin by presenting another description of the zero sets for a RKHS $\mathscr{H}$ satisfying our hypotheses.  This description is due to Shapiro and Shields \cite{MR0145082}.

Let $W_n := \{w_1, w_2,\ldots,w_n\}, w_j \in \Omega \setminus \{0\},$
and define
$$
     f_n(z) = \Big( 1 - \frac{z}{w_1}  \Big) \Big( 1 - \frac{z}{w_2}  \Big)\cdots  \Big( 1 - \frac{z}{w_n}  \Big),$$
     and
     $J_n = f_n - P_{[z f_n]} f_n$.
For notational simplicity, let $k_j$ be the reproducing kernel for $w_j$ and let $k_0$ be the reproducing kernel at the origin.
 
From \eqref{bbbbcbcb} we know that $J_n$ has the representation
\begin{equation}\label{repinnereq}
     J_n(z)  =  c_{n,0} k_0 + c_{n,1} k_1 + c_{n,2} k_2 +\cdots+c_{n,n} k_n, 
\end{equation}
 where the coefficients $c_{n,j}$ are uniquely determined by the conditions 
 $$J_n(w_1)=J_n(w_2)=\cdots =J_n(w_n)=0, \quad J_n(0)=1.$$  Indeed, the coefficients are the unique solutions to the matrix equation
 \[
     \left[\begin{array}{ccccc} G_{0,0} & G_{0,1} &  G_{0,2} & \ldots & G_{0,n}\\
         G_{1,0} & G_{1,1} &  G_{1,2} & \ldots & G_{1,n}\\
         G_{2,0} & G_{2,1} &  G_{2,2} & \ldots & G_{2,n}\\
         \vdots & \vdots & \vdots & \vdots & \vdots \\
         G_{n,0} & G_{n,1} &  G_{n,2} & \ldots & G_{n,n}
         \end{array}\right] \\
         \left[\begin{array}{c} c_{n,0} \\ c_{n,1} \\ c_{n,2} \\ \vdots \\ c_{n,n}   \end{array}\right]
= \left[\begin{array}{c} 1 \\ 0 \\ 0 \\ \vdots \\ 0  \end{array}\right],
 \]
 where 
 \[
      G = G^{(n)} = G[k_0, k_1, k_2,\ldots,k_n]
 \]
 is the Gramian matrix for the vectors $k_0, k_1, k_2,\ldots,k_n$, and 
 $$G_{s,t} := \langle k_t, k_s  \rangle.$$  Since a finite set of reproducing kernels is linearly independent, the Gramian determinant is nonzero, and hence the matrix $G$ is invertible, guaranteeing a unique solution for the coefficients.
 
Continuing from the above equation, we can write
\begin{align*}
      J_n(z) &= \left[\begin{array}{ccccc} k_0(z) & k_1(z) & k_2(z) & \cdots & k_n(z)\end{array}  \right] \left[\begin{array}{c} c_{n,0} \\ c_{n,1} \\ c_{n,2} \\ \ldots \\ c_{n,n}   \end{array}\right] \\
      &= \left[\begin{array}{ccccc} k_0(z) & k_1(z) & k_2(z) & \cdots & k_n(z)\end{array}  \right] {G^{(n)}}^{-1} \left[\begin{array}{c} 1 \\ 0 \\ 0 \\ \ldots \\ 0  \end{array}\right]\\
      &=\big[A_{0,0}k_0(z) - A_{0,1}k_1(z) +  \cdots+ (-1)^n A_{0,n} k_n(z)\big]/\det G^{(n)},
\end{align*}
where $A_{m,n}$ is the $(m,n)$th cofactor of $G^{(n)}$.  But the last quantity in square brackets is itself the determinant of a certain matrix, yielding
\begin{equation}\label{detrepinnerfcneq}
     J_n(z) \det G^{(n)} =  \det \left[\begin{array}{ccccc} k_0(z) & k_1(z) &  k_2(z) & \ldots & k_n(z)\\
         G_{1,0} & G_{1,1} &  G_{1,2} & \ldots & G_{1,n}\\
         G_{2,0} & G_{2,1} &  G_{2,2} & \ldots & G_{2,n}\\
         \ldots & \ldots & \ldots & \ldots & \ldots \\
         G_{n,0} & G_{n,1} &  G_{n,2} & \ldots & G_{n,n}
         \end{array}\right]
\end{equation}

Let 
\begin{equation}\label{dn0000}
       d_n := \inf \big\| k_0 - (c_1 k_1 + c_2 k_2 + \cdots c_n k_n) \big\|
\end{equation}
where the infimum is over the coefficients $c_1, c_2,\ldots, c_n$.  It is well known that 
\begin{equation}\label{detratioeq}
     d_n^2 = \frac{\det G[k_0, k_1, k_2,\ldots,k_n]}{\det G[k_1, k_2,\ldots,k_n]}.
\end{equation}
A proof of this appears in \cite[Lemma 4.2.4]{EKMR}.

Furthermore, Oppenheim's inequality (see, for example, \cite{HJ}) tells us that for nonnegative definite square matrices $A = (a_{s, t})$ and $B = (b_{s, t})$, the Hadamard product of $A$ and $B$, i.e, $(a_{s, t} b_{s, t})$, satisfies
\begin{equation}\label{detprodposmateq}
     \det [a_{s,t}b_{s,t}] \geq \Big(\det [a_{s,t}] \Big)\Big(\prod_{t} b_{t,t}\Big).
\end{equation}
This enables us to derive, as was done in \cite{MR0145082}, the following sufficient condition for a zero set of $\mathscr{H}$ (see also \cite{EKMR} for an exposition of this).

\begin{Theorem}\label{zerosetcritprop}
    Let $\{w_1, w_2, w_3,\ldots\} \subset \D \setminus \{0\}$ be a sequence of distinct points.   If the matrix
    \begin{equation}\label{nonnegcond}
        \big[ 1 -  k_0(w_s) \overline{k_0(w_t)}/k_{w_t}(w_s)k_0(0)\big]_{1\leq s, t\leq n}
    \end{equation}
    is nonnegative definite for all $n\geq 1$,
    and
    \begin{equation}\label{suffcondzereq}
         \inf_{n} \prod_{m=1}^{n} \left[1 - \frac{|k_0(w_m)|^2} {k_{w_m}(w_m)k_0(0)}\right]  > 0,
    \end{equation}
    then there exists a nonzero $f\in \mathscr{H}$ such that $f(w_n)=0$ for all $n\geq 1$.
\end{Theorem}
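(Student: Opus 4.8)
The plan is to estimate $\|J_n\|$ using the determinant formula \eqref{detrepinnerfcneq} combined with the ratio formula \eqref{detratioeq} and Oppenheim's inequality \eqref{detprodposmateq}, and then invoke Theorem \ref{v98u24tprefewq23re} to conclude that the sequence $(w_j)$ is a zero set. Recall from the derivation preceding Theorem \ref{v98u24tprefewq23re} that $\|J_n\|^2 = 1/d_n^2$, where $d_n$ is the distance from $k_0$ to $\bigvee\{k_1,\dots,k_n\}$ as in \eqref{dn0000}; indeed $d_n = \|k_0 - \sum_{j=1}^n \langle k_0, v_j\rangle v_j\|$ in the earlier notation, and \eqref{skdfhgsdkfld11} gives precisely $\|J_n\|^2 = 1/d_n^2$. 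So the claim $\sup_n \|J_n\| < \infty$ — which by Theorem \ref{v98u24tprefewq23re}(3) is equivalent to $(w_j)$ being a zero set — is equivalent to $\inf_n d_n^2 > 0$.

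First I would rewrite \eqref{detratioeq} as a telescoping product:
\[
    d_n^2 = \frac{\det G[k_0,k_1,\dots,k_n]}{\det G[k_1,\dots,k_n]} = \det G[k_0] \cdot \prod_{m=1}^{n} \frac{\det G[k_0,k_1,\dots,k_m]\,\det G[k_1,\dots,k_{m-1}]}{\det G[k_0,k_1,\dots,k_{m-1}]\,\det G[k_1,\dots,k_m]},
\]
or more cleanly, apply a single determinantal identity. The cleaner route: write the Gramian $G[k_0,k_1,\dots,k_n]$ as a Hadamard product. Observe $G_{s,t} = \langle k_t, k_s\rangle = k_t(w_s) = k_{w_t}(w_s)$ (with $w_0 := 0$), and factor
\[
    k_{w_t}(w_s) = \sqrt{k_{w_s}(w_s)}\,\sqrt{k_{w_t}(w_t)}\, \cdot \frac{k_{w_t}(w_s)}{\sqrt{k_{w_s}(w_s)}\sqrt{k_{w_t}(w_t)}},
\]
so that $G = D \, (\widetilde G) \, D$ where $D = \operatorname{diag}(\sqrt{k_{w_0}(w_0)},\dots)$ and $\widetilde G$ is the Gramian of the normalized kernels, which has $1$'s on the diagonal and is nonnegative definite. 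Then $\det G = \prod_{j=0}^n k_{w_j}(w_j) \cdot \det \widetilde G$, and similarly for $G[k_1,\dots,k_n]$. Taking the ratio, the diagonal factors for $j = 1,\dots,n$ cancel, leaving $d_n^2 = k_0(0) \cdot \det \widetilde G^{(n)} / \det \widetilde G[k_1,\dots,k_n]$.

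Next I would split $\widetilde G^{(n)}$ (the $(n+1)\times(n+1)$ normalized Gramian with the row/column indexed by $w_0 = 0$ included) into a Hadamard product designed so that one factor is exactly $\det \widetilde G[k_1,\dots,k_n]$ bordered by a rank-controlled piece and the other factor has diagonal entries $1 - |k_0(w_m)|^2/(k_{w_m}(w_m)k_0(0))$ — precisely the quantities in \eqref{suffcondzereq}. This is the step where I would follow Shapiro–Shields \cite{MR0145082} closely: write $\widetilde G^{(n)} = A \circ B$ where $A$ is chosen to have $\det A = \det \widetilde G[k_1,\dots,k_n]$ (for instance $A$ agrees with $\widetilde G[k_1,\dots,k_n]$ in the relevant block and is padded with a $1$ for the $0$-index) and $B$ is the matrix in \eqref{nonnegcond} suitably augmented; the hypothesis that \eqref{nonnegcond} is nonnegative definite is exactly what is needed to apply Oppenheim's inequality \eqref{detprodposmateq} to this Hadamard factorization. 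Oppenheim then yields
\[
    \det \widetilde G^{(n)} \geq \det A \cdot \prod_{t} B_{t,t} = \det \widetilde G[k_1,\dots,k_n] \cdot \prod_{m=1}^{n}\left[1 - \frac{|k_0(w_m)|^2}{k_{w_m}(w_m)k_0(0)}\right],
\]
whence $d_n^2 = k_0(0)\,\det\widetilde G^{(n)}/\det\widetilde G[k_1,\dots,k_n] \geq k_0(0)\prod_{m=1}^n[1 - |k_0(w_m)|^2/(k_{w_m}(w_m)k_0(0))]$. Taking infima and using hypothesis \eqref{suffcondzereq} gives $\inf_n d_n^2 > 0$, hence $\sup_n \|J_n\| < \infty$, and Theorem \ref{v98u24tprefewq23re}(3) produces the desired nonzero $f \in \mathscr{H}$ vanishing on $\{w_n\}$.

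The main obstacle is getting the Hadamard factorization $\widetilde G^{(n)} = A \circ B$ exactly right so that $\det A$ reduces to $\det \widetilde G[k_1,\dots,k_n]$ and $B$'s diagonal produces the product in \eqref{suffcondzereq} — the bookkeeping with the $0$-index row/column and the normalizing constants $\sqrt{k_{w_j}(w_j)}$ must be done carefully, and one must verify that the matrix $A$ so constructed is indeed nonnegative definite (it is, being essentially a Gramian) so that Oppenheim applies. A secondary technical point is that Theorem \ref{zerosetcritprop} assumes the $w_j$ are \emph{distinct}, which sidesteps the multiplicity complications; if one wanted the general case one would need to pass to a limit or use confluent (derivative) kernels, but as stated this is not required. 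Everything else — the telescoping/ratio identity \eqref{detratioeq}, the identity $\|J_n\|^2 = 1/d_n^2$, and the final appeal to Theorem \ref{v98u24tprefewq23re} — is routine given the development already in the paper.
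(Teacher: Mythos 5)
Your overall architecture is exactly the paper's: reduce via \eqref{skdfhgsdkfld11} and Theorem \ref{v98u24tprefewq23re} to showing $\inf_n d_n^2 > 0$, use the ratio formula \eqref{detratioeq}, and obtain a lower bound on the determinant ratio by applying Oppenheim's inequality \eqref{detprodposmateq} to a Hadamard factorization whose second factor is the matrix \eqref{nonnegcond}. The identity $\|J_n\|^2 = 1/d_n^2$ and the normalization $d_n^2 = k_0(0)\,\det\widetilde G^{(n)}/\det\widetilde G[k_1,\dots,k_n]$ are both correct (the normalization is harmless but unnecessary; the paper works with the unnormalized Gramian). However, the one step you commit to concretely --- writing $\widetilde G^{(n)} = A \circ B$ with $A$ equal to $\widetilde G[k_1,\dots,k_n]$ ``padded with a $1$ for the $0$-index'' --- cannot work. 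Such an $A$ has $A_{0,t}=0$ for $t\geq 1$, so $(A\circ B)_{0,t}=0$, whereas $\widetilde G^{(n)}_{0,t} = \overline{k_0(w_t)}/\bigl(k_0(0)k_{w_t}(w_t)\bigr)^{1/2}$ is generically nonzero. No Hadamard factorization of the full bordered $(n+1)\times(n+1)$ Gramian of the kind you describe exists, and this is precisely the obstacle you flagged.

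The missing idea is a Schur-complement reduction \emph{before} Hadamard-factoring. Adding suitable multiples of the $0$th row to the others gives
$$\det G^{(n)} = G_{0,0}\,\det\bigl[\,G_{s,t} - G_{s,0}G_{0,t}/G_{0,0}\,\bigr]_{1\leq s,t\leq n},$$
and it is this $n\times n$ Schur complement, not the bordered Gramian, that factors entrywise as
$$G_{s,t} - \frac{G_{s,0}G_{0,t}}{G_{0,0}} = G_{s,t}\Bigl(1 - \frac{G_{s,0}G_{0,t}}{G_{s,t}G_{0,0}}\Bigr),$$
i.e., as the Hadamard product of the Gramian $G[k_1,\dots,k_n]$ (nonnegative definite automatically) with the matrix \eqref{nonnegcond} (nonnegative definite by hypothesis, since $G_{s,0}=k_0(w_s)$ and $G_{0,t}=\overline{k_0(w_t)}$). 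Oppenheim then yields $\det G^{(n)} \geq G_{0,0}\,\det G[k_1,\dots,k_n]\prod_{t=1}^n\bigl[1 - |k_0(w_t)|^2/(k_{w_t}(w_t)k_0(0))\bigr]$, and dividing by $\det G[k_1,\dots,k_n]$ via \eqref{detratioeq} gives $d_n^2 \geq k_0(0)\prod_{t=1}^n\bigl[1 - |k_0(w_t)|^2/(k_{w_t}(w_t)k_0(0))\bigr]$, after which your conclusion goes through verbatim. So the fix is a one-line row reduction, but as written your factorization step would fail.
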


\begin{proof}
    By (\ref{skdfhgsdkfld11}) and Theorem \ref{v98u24tprefewq23re}, it is enough to show that the quantity $d_n$ from \eqref{dn0000} satisfies  $\inf d_n > 0$.  Let us examine $\det G^{(n)}$, with a view towards applying (\ref{detratioeq}).  This determinant is unchanged if the multiple of any row is added to a different row.  Suppose that $G_{s,0}/G_{0,0}$ times the $0$th row (the rows and columns are indexed from $0$ to $n$) is added to the $s$th row, for all $1\leq s \leq n$.  The result is that
    \begin{align*}
          \det G^{(n)}  &=  G_{0,0} \det \big[ G_{s,t} - G_{s,0}G_{0,t}/G_{0,0} \big]_{1\leq s, t\leq n}\\
          &=   G_{0,0} \det \big[ G_{s,t} (1 - G_{s,0}G_{0,t}/G_{s,t}G_{0,0}) \big]_{1\leq s, t\leq n}\\
          &\geq G_{0,0} \Big(\det G[k_1, k_2,\ldots,k_n] \Big)\Big( \prod_{t=1}^{n} [1 - G_{t,0}G_{0,t}/G_{t,t}G_{0,0}]  \Big),
    \end{align*}
where in the last step we applied  (\ref{detprodposmateq}).  The claim now follows from invoking (\ref{detratioeq}), and writing out $G_{t,t}$ in terms of the kernel functions.
\end{proof}

\begin{Example}
     When $\mathscr{H} = H^2$, the matrix in (\ref{nonnegcond}) takes the form
     \[
            \left[  \bar{w}_t w_s \right]_{1\leq s, t\leq n},
     \]
     which is obviously positive definite.
     
     Now the zero set criterion (\ref{suffcondzereq}) is 
     \begin{align*}
        0 &< \inf_{n} \prod_{m=1}^{n} \Big[1 - \frac{|k_0(w_m)|^2} {k_{w_m}(w_m)k_0(0)}\Big]  \\
           &= \inf_{n}\prod_{m=1}^{n} [1 - \frac{1}{1/(1 - |w_m|^2)}] \\
           &= \inf_{n}\prod_{m=1}^{n} |w_m|^2.
     \end{align*}
     Of course, this is equivalent to the Blaschke condition.
\end{Example}

In \cite{MR0145082} the zero sets of functions in the Dirichlet space $\mathscr{D}$ (and other related spaces) were discussed.  The Dirichlet space can be viewed as the weighted $\ell^2$ space with weights $1, 2, 3,\ldots, n+1,\ldots$.  We now construct a large class of such weighted spaces for which the corresponding matrices \ref{nonnegcond} are nonnegative definite, and hence lie within the scope of Theorem \ref{zerosetcritprop}.

\begin{Example}\label{weispell2lmaex} 
Fix $\Omega = \mathbb{D}$, and let $w_1, w_2, w_3,\ldots$ be a sequence of distinct nonzero points in $\mathbb{D}$.  
Suppose that $\Lambda := (\lambda_n)_{n\geq 0}$ is a sequence of positive numbers with $\lambda_0=1$, and define $\ell^2(\Lambda)$ to be the Hilbert space of sequences $f = (f_n)_{n\geq 0}$ such that
\[
     \|f\| = \Big( \sum_{n=0}^{\infty} |f_n|^2 \lambda_n  \Big)^{1/2} < \infty.
\]
Provided that the weights $\lambda_n$ do not decay to zero too rapidly, each member of $\ell^2(\Lambda)$ can be identified with the analytic function
\[
    f(z) = \sum_{n=0}^{\infty} f_n z^n
\]
on $\mathbb{D}$.   (For example, if the weights decay exponentially, then $\ell^2(\Lambda)$ will contain some coefficient sequences that increase exponentially; such functions will not necessarily be analytic in all of $\mathbb{D}$.)
The reproducing kernel function
\[
      k_{w}(z) :=  \sum_{n=0}^{\infty}  \frac{(\bar{w}z)^n}{\lambda_n} 
\]
implements point evaluation at $w \in \mathbb{D}$.  Again, if the weights $\lambda_n$ do not decay too rapidly, the kernel function will be analytic in $\mathbb{D}$.   Notice that point evaluation at the origin corresponds to the constant kernel $1$.

Let us determine sufficient conditions on the sequence $\Lambda$ of weights for the matrix in (\ref{nonnegcond}) to be nonnegative definite.
We claim that for any $a>0$, and positive integers $m$ and $n$, the matrix
\[
           M :=  \left[  a(\bar{w}_t w_s)^m \right]_{1\leq s, t\leq n}
\]
is nonnegative definite.  This is because 
\[
      C^* M C =   a \big| c_1 w_1^m + c_2 w_2^m + \cdots + c_n w_n^m \big|^2  \geq 0
\]
for any column vector $C$ with $C^* = [ \bar{c}_1\ \bar{c}_2\ \ldots \bar{c}_n]$.  For $n$ fixed the sum of any such matrices is also nonnegative definite.  In particular, if $(a_m)_{m\geq 1}$ is a sequence of nonegative numbers with $a_1 > 0$ and $\sum_{m=1}^{\infty} a_m  \leq 1$, the matrix
\[
        \left[ \sum_{m=1}^{\infty}    a_m (\bar{w}_t w_s)^m \right]_{1\leq s, t\leq n}
\]
is nonnegative definite.

It is clear that the function of $z$ defined by
\[
       \Phi(z) :=  \frac{1}{ 1 - \sum_{m=1}^{\infty}    a_n  z^m }
\]
is analytic in $\mathbb{D}$, and has a convergent power series
\begin{equation}\label{kerfcnindisg}
      \Phi(z) =  1 + \sum_{n=1}^{\infty} b_n z^n
\end{equation}
in $\mathbb{D}$.  By expressing $\Phi$ as the geometric series
\[
    \Phi(z) = 1 + \Big(\sum_{m=1}^{\infty}    a_n  z^m\Big) + \Big(\sum_{m=1}^{\infty}    a_n  z^m\Big)^2 +\cdots,
\]
 and using the assumption that $a_1 >0$, we find that each $b_n$ is positive (see also the Kaluza lemma \cite[p. 69]{EKMR}).

Thus, with the identification $b_n = 1/\lambda_n$ for all $n\geq 1$, the function $\Phi(\bar{w}z)$ is the reproducing kernel in the weighted space $\ell^2(\Lambda)$ for $w\in\mathbb{D}$.
It then follows that
\begin{align*}
       1 -  k_0(w_s) \overline{k_0(w_t)}/k_{w_t}(w_s)k_0(0)
       &=      1 -  1/k_{w_t}(w_s) \\
       &=     1 -  \Big(1 + \sum_{n=1}^{\infty} b_n (\bar{w}_t w_s)^n\Big)^{-1} \\
       &=     1 -  \Big(1 - \sum_{m=1}^{\infty}    a_n (\bar{w}_t w_s)^m\Big) \\
       &=     \sum_{m=1}^{\infty}    a_n (\bar{w}_t w_s)^m\Big. 
\end{align*}
That is to say, for the weighted space $\ell^2(\Lambda)$, the matrix in (\ref{nonnegcond}) is nonnegative definite.  

According to Theorem \ref{zerosetcritprop}, a sequence $w_1, w_2, w_3,\ldots$ of distinct nonzero points of $\mathbb{D}$ is the zero set of some nontrivial function $f\in\ell^2(\Lambda)$ if 
\[
     \inf_{n\geq 1} \prod_{m=1}^{n} \left[1 - \frac{1} {1 - \sum_{j=1}^{\infty} a_j |w_m|^{2j}}\right]  > 0.
\]
This provides a sufficient condition for a zero set of $\ell^2(\Lambda)$. \label{previousexample}
\end{Example}

\begin{Example}  With the definitions of Example \ref{previousexample}, it was shown in \cite{MR0145082} that if the sequence $(b_n)_{n\geq 0}$ (the reciprocals of the weights of the space $\ell^2(\Lambda)$) satisfies 
\[
      b_n^{2} \leq b_{n+1}b_{n-1}
\]
for all $n\geq 1$, then the matrices given by \ref{nonnegcond} are nonnegative definite, and thus Theorem \ref{zerosetcritprop} applies.  This class of examples  includes the Dirichlet space $\mathscr{D}$.
\end{Example}

Here is another way to see how extra zeros may arise. Recall the formula from \eqref{repinnereq}
$$J_n(z)  =  c_{n,0} k_0 + c_{n,1} k_1 + c_{n,2} k_2 +\cdots+c_{n,n} k_n$$
for expressing the $S_{\mathscr{H}}$-inner function of a finite zero set in terms of the corresponding kernel functions.
 
\begin{Lemma}The $S_{\mathscr{H}}$-inner function $J_{n - 1}$ has an extra zero at the point $w_n$ if and only if the coefficient $c_{n,n}$ vanishes.
\end{Lemma}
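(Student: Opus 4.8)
The plan is to lean on the uniqueness already built into the representation \eqref{repinnereq}: $J_m$ is the \emph{unique} function lying in the finite-dimensional span $\bigvee\{k_0,k_1,\ldots,k_m\}$ that vanishes at $w_1,\ldots,w_m$ and satisfies $J_m(0)=1$ (equivalently, the coefficients $c_{m,j}$ are the unique solution of the Gramian system displayed above). Alongside this I would use that any finite family of reproducing kernels is linearly independent, so the coordinates of a vector in $\bigvee\{k_0,\ldots,k_m\}$ relative to $k_0,\ldots,k_m$ are uniquely determined, together with the obvious inclusion $\bigvee\{k_0,\ldots,k_{n-1}\}\subset\bigvee\{k_0,\ldots,k_n\}$.

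First, suppose $c_{n,n}=0$. Then $J_n=\sum_{j=0}^{n-1}c_{n,j}k_j$ already belongs to $\bigvee\{k_0,\ldots,k_{n-1}\}$; moreover $J_n$ vanishes at $w_1,\ldots,w_{n-1}$ (these being among $w_1,\ldots,w_n$) and $J_n(0)=1$. These are precisely the conditions that single out $J_{n-1}$ within $\bigvee\{k_0,\ldots,k_{n-1}\}$, so by uniqueness $J_n=J_{n-1}$, whence $J_{n-1}(w_n)=J_n(w_n)=0$. Since the points $w_1,\ldots,w_n$ are distinct, this zero at $w_n$ is a genuine extra zero of $J_{n-1}$.

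Conversely, suppose $J_{n-1}(w_n)=0$. Regarding $J_{n-1}$ as an element of the larger span $\bigvee\{k_0,\ldots,k_n\}$, it vanishes at $w_1,\ldots,w_{n-1}$ by construction, at $w_n$ by assumption, and has $J_{n-1}(0)=1$; hence $J_{n-1}$ satisfies exactly the conditions characterizing $J_n$, so $J_{n-1}=J_n$. Comparing the two kernel expansions $J_{n-1}=\sum_{j=0}^{n-1}c_{n-1,j}k_j+0\cdot k_n$ and $J_n=\sum_{j=0}^{n}c_{n,j}k_j$ and invoking linear independence of $k_0,\ldots,k_n$ forces $c_{n,n}=0$ (and incidentally $c_{n,j}=c_{n-1,j}$ for $j\le n-1$).

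I do not expect a serious obstacle here; the only thing to be careful about is not conflating the two uniqueness statements (uniqueness of the interpolant $J_m$ inside the kernel span, versus uniqueness of coordinates in the kernel basis), both of which are immediate from what precedes. A more computational alternative, which I might include as a remark, is to extract $c_{n,n}$ from \eqref{detrepinnerfcneq} by Cramer's rule: expanding along the last column identifies $c_{n,n}\det G^{(n)}$, up to sign, with the minor of $G^{(n)}$ obtained by deleting its first row and last column, while evaluating the analogue of \eqref{detrepinnerfcneq} for $J_{n-1}$ at $z=w_n$ and using $k_j(w_n)=\langle k_j,k_n\rangle=G_{n,j}$ produces the same minor up to sign and a factor $\det G^{(n-1)}$. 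Since Gramians of linearly independent vectors have strictly positive determinant, this yields $c_{n,n}=-\big(\det G^{(n-1)}/\det G^{(n)}\big)J_{n-1}(w_n)$, making the equivalence transparent; the one nuisance on that route is keeping track of the cofactor signs.
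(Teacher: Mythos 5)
Your proof is correct and follows essentially the same route as the paper's: both directions rest on the uniqueness of $J_m$ as the solution of the interpolation problem ($J_m(0)=1$, vanishing at $w_1,\dots,w_m$) inside the span $\bigvee\{k_0,\dots,k_m\}$, giving $J_n=J_{n-1}$ in each direction, followed by linear independence of the kernels to read off $c_{n,n}=0$. The paper phrases the characterization of $J_m$ via orthogonality to $z^jf_m$ rather than membership in the kernel span, but by Lemma \ref{sldkfjsdfeq} these are the same condition, and your concluding Cramer's-rule remark matches the computation of $c_{n,n}$ that the paper carries out immediately after the lemma.
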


\begin{proof} Suppose that $c_{n,n} = 0$.  Then $J_n$ has the following properties: 
 $$J_n(0) = 1, J_n(w_1)=\cdots =J_n(w_{n-1})=0,$$ and
$$\langle z^m f_{n-1}, J_n \rangle = 0, \quad m\geq 1,$$  This forces the identification $J_n = J_{n-1}$.  Since $J_{n-1}(w_n)=0$, it can be said that $w_n$ is an extra zero of $J_{n-1}$.

Conversely suppose that $w_n$ is an extra zero of $J_{n-1}$.  First, for any $m\geq 1$,
\begin{align*}
    \langle z^m f_n(z), J_{n-1} \rangle &= \langle z^m f_{n-1}(z), J_{n-1} \rangle - \langle z^{m+1}(1/w_n) f_{n-1}(z), J_{n-1} \rangle \\
    &= 0.
\end{align*}
Furthermore, 
$$J_{n-1}(w_1)=J_{n-1}(w_2)=\cdots =J_{n-1}(w_n)=0, \quad J_{n-1}(0)=1.$$  This implies that $J_{n-1} = J_n$.  Since the representations 
(\ref{repinnereq}) are unique, it must be that $c_{n,n}=0$.  
\end{proof}

Let us calculate $c_{n,n}$.  Let $H$ be the matrix $G^{(n)}$ with its $n$th column (the columns are indexed 0 through $n$) replaced by
$[1\ 0\ 0\ \ldots\ 0]^{T}$.  By Cramer's Rule,
\[
       c_{n,n} = \frac{\det H}{\det G^{(n)}}.
\]
Since the last column of $H$ is such a special form, taking the determinant of $H$ results in $(-1)^n$ times the determinant of the following submatrix of $G^{(N)}$:
\[
    R:=  \left[\begin{array}{ccccc} 
         G_{1,0} & G_{1,1} &  G_{1,2} & \ldots & G_{1,n-1}\\
         G_{2,0} & G_{2,1} &  G_{2,2} & \ldots & G_{2,n-1}\\
         \ldots & \ldots & \ldots & \ldots & \ldots \\
         G_{n,0} & G_{n,1} &  G_{n,2} & \ldots & G_{n,n-1}
         \end{array}\right].
\]
\begin{Proposition}\label{linearkernelprop}
   The inner function $J_{n-1}$ corresponding to the zero set $w_1, w_2,\ldots, w_{n-1}$ has an extra zero at $w_n$ precisely when $\det R =0$.
\end{Proposition}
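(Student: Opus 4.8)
The plan is to combine the Cramer's Rule computation just carried out with the previous Lemma, so that the proof is essentially immediate. Recall that the Lemma established that $J_{n-1}$ has an extra zero at $w_n$ if and only if the coefficient $c_{n,n}$ in the representation \eqref{repinnereq} of $J_n$ vanishes. The displayed Cramer's Rule formula gives $c_{n,n} = \det H / \det G^{(n)}$, and the cofactor expansion of $\det H$ along its special last column $[1\ 0\ \cdots\ 0]^T$ shows $\det H = (-1)^n \det R$. Since $G^{(n)}$ is the Gramian of a finite collection of reproducing kernels, and such kernels are linearly independent, $\det G^{(n)} \neq 0$.

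First I would note that $\det G^{(n)} > 0$ (it is a nonzero Gramian determinant), so the fraction $c_{n,n} = (-1)^n \det R / \det G^{(n)}$ is well defined and vanishes precisely when $\det R = 0$. Then I would invoke the preceding Lemma: $J_{n-1}$ has an extra zero at $w_n$ exactly when $c_{n,n} = 0$, which by the previous sentence happens exactly when $\det R = 0$. That completes the argument.

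There is really no serious obstacle here; the content has all been front-loaded into the Lemma and the Cramer's Rule paragraph. The only point that requires a word of care is the nonvanishing of $\det G^{(n)}$, so that dividing is legitimate and the equivalence $c_{n,n}=0 \iff \det R = 0$ is genuine — this is exactly the linear independence of the reproducing kernels $k_0, k_1, \ldots, k_n$ at the distinct points $0, w_1, \ldots, w_n$, which was already used when the matrix equation for the $c_{n,j}$ was set up. I would state the proof as follows.

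\begin{proof}
By the preceding discussion and Cramer's Rule we have $c_{n,n} = \det H / \det G^{(n)}$, and expanding $\det H$ along its last column, which equals $[1\ 0\ \cdots\ 0]^{T}$, gives $\det H = (-1)^{n}\det R$. The reproducing kernels $k_0, k_1,\ldots, k_n$ associated to the distinct points $0, w_1,\ldots, w_n$ are linearly independent, so their Gramian determinant $\det G^{(n)}$ is nonzero. Hence $c_{n,n} = (-1)^{n}\det R / \det G^{(n)}$ vanishes if and only if $\det R = 0$. By the previous Lemma, $J_{n-1}$ has an extra zero at $w_n$ if and only if $c_{n,n} = 0$, and combining these two statements yields the claim.
\end{proof}
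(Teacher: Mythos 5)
Your proof is correct and follows exactly the route the paper intends: the paper proves the preceding Lemma (extra zero at $w_n$ iff $c_{n,n}=0$), computes $c_{n,n}=\det H/\det G^{(n)}$ by Cramer's Rule with $\det H=(-1)^n\det R$, and leaves the Proposition as the immediate combination of these facts, with the nonvanishing of the Gramian determinant already noted when the coefficient system was set up. Your explicit attention to $\det G^{(n)}\neq 0$ is the right (and only) point needing care, and it matches the paper's justification via linear independence of the reproducing kernels.
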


By comparing this situation to the representation (\ref{detrepinnerfcneq}), we can confirm that this is a way of expressing $J_{n-1}(w_n)=0$.

When $n=2$ this gives us a simple criterion for deciding whether the inner function corresponding to a linear polynomial has an extra zero.  In this situation,
\[
     \det R = \det \left[\begin{array}{cc} 
         G_{1,0} & G_{1,1} \\
         G_{2,0} & G_{2,1}
         \end{array}\right]
         = \langle k_0, k_1 \rangle \langle k_2, k_1 \rangle - \langle k_0, k_2 \rangle \langle k_1, k_1 \rangle .
\]
Thus by another route we have arrived at the inner function identified in (\ref{inneronezero}).

\begin{Example}
Consider the case $\mathscr{H} = H^2$.  The $S$-inner functions are the classical inner functions, which have no extra zeros.  Let us confirm this for linear polynomials, using Proposition \ref{linearkernelprop}.  Let $r$ and $s$ be distinct nonzero points in $\mathbb{D}$.  Then the inner part of the linear polynomial
\[
     f(z) = 1 - \frac{z}{r}
\]
has the extra zero $s$ precisely if
\[
        1 \cdot \frac{1}{1 - \bar{s}r} = 1 \cdot \frac{1}{1-|r|^2}.
\]
Of course, this never happens when $r\neq s$, reflecting that the Blaschke factor vanishing at $r$ vanishes nowhere else.  To rule out the possibility of a double root at $r$, we use the kernel function
\[
   k_{1,r} = \frac{1}{(1-\bar{r}z)^2},
\]
for evaluation of a derivative at $r$.  The criterion then becomes
\[
     1 \cdot \frac{1}{(1-|r|^2)^2} = 1 \cdot \frac{1}{1 - |r|^2},
\]
which is also impossible.
\end{Example}

Finally, we demonstrate that there are numerous spaces for which there exist $S$-inner functions with extra zeros.

\begin{Example}  
    Let us return to the weighted spaces $\ell^2(\Lambda)$ of Example \ref{weispell2lmaex}, and consider the special case that the weights arise in connection with the choice
    \[
        \Phi(z) = \frac{1}{1 - a_1 z - a_2 z^2},
    \]
    where $a_1 + a_2 \leq 1$, and $a_2 > 4a_1 > 0$.  Then, by use of the geometric series formula we find that
    \[
         \Phi(z) =  1 + \sum_{n=1}^{\infty} b_n z^n,
    \]
    with
    \begin{align*}
      b_{2n-1} &= {2n-1 \choose 0} a_1^{2n-1} + {2n-2 \choose 1}a_1^{2n-3}a_2 +{2n-3 \choose 2}a_1^{2n-5}a_2^2 \\
                     &\quad + \cdots + {n \choose n-1} a_1 a_2^{n-1}\\
                     &\leq (a_1^2 + a_2)^{2n}/a_1\\
                     &\leq 1/a_1\\
      b_{2n} &= {2n \choose 0} a_1^{2n} + {2n-1 \choose 1}a_1^{2n-2}a_2 +{2n-2 \choose 2}a_1^{2n-4}a_2^2 \\
                     &\quad + \cdots + {n \choose n}  a_2^{n}\\
                     &\leq (a_1^2 + a_2)^{2n}\\
                     &\leq 1.
    \end{align*}
    for all $n\geq 1$.
    
    Each coefficient $b_n$ is positive, and so we may define the weights $\lambda_0 = 1$, and $\lambda_n = 1/b_n$, $n\geq 1$.   The weights are bounded away from zero, and therefore the functions belonging to $\ell^2(\Lambda)$ are analytic in $\mathbb{D}$.
   Furthermore, point evaluation at $w\in\mathbb{D}$ arises from the reproducing kernel function
    \[
           k_w(z) = \Phi(\bar{w}z) = \frac{1}{1-a_1\bar{w}z - a_2 (\bar{w}z)^2},
    \]
    which is obviously analytic in $\mathbb{D}$.
    
    The inner function associated with the polynomial $1 - z/w$ has an extra zero $\zeta$, distinct from $w$, provided that
    \begin{align*}
             \langle k_0, k_w \rangle \langle k_{\zeta}, k_w \rangle &= \langle k_0, k_{\zeta} \rangle \langle k_w, k_w \rangle\\
             1 \cdot \frac{1}{1-a_1\bar{w}\zeta - a_2 (\bar{w}\zeta)^2} &= 1 \cdot \frac{1}{1-a_1\bar{w}w - a_2 (\bar{w}w)^2}\\
             {a_1\bar{w}(\zeta-w) + a_2 \bar{w}^2(\zeta-w)^2} &= 0\\
             {a_1\bar{w} + a_2 \bar{w}^2(\zeta+w)} &= 0\\
             \zeta &= -\frac{a_1  + a_2 |w|^2}{a_2\bar{w}}
    \end{align*}
   But by assumption $a_2 > 4a_1$, so we can choose $w\in\mathbb{D}$ so that 
     \[
          |w| - |w|^2 > a_1/a_2,
     \]
     which in turn implies that $\zeta \in \mathbb{D}$.
     
     We have thus constructed a family of spaces $\mathscr{H}$ of analytic functions on $\mathbb{D}$ for which there exist $S$-inner functions having extra zeros.  This shows that the phenomenon of extra zeros is in some way unexceptional.  
\end{Example}

%%%%%%%%%%%%%%%%%%%%%%%%%%%%%%%%%%%%%%%%%%%%%%%%%%%%%%%%%%%%%%%%%%%%%%%%
%%%%%%%%%%%%%%%%%%%%%%%%%%%%%%%%%%%%%%%%%%%%%%%%%%%%%%%%%%%%%%%%%%%%%%%%
%%%%%%%%%%%%%%%%%%%%%%%%%%%%%%%%%%%%%%%%%%%%%%%%%%%%%%%%%%%%%%%%%%%%%%%%

\section{Inner vectors in Banach spaces}\label{section7}

Recall from Section \ref{s2} that a vector $\vec{v}$ in a Hilbert space is $T$-inner if 
\begin{equation}\label{deftinner}
\langle \vec{v}, T^n \vec{v}\rangle = 0,\ n \geq 1.
\end{equation}
We want to extend the definition of $T$-inner vectors to Banach spaces.  
However, first we need a notion of ``orthogonality'' so we can make sense of the very definition in a Banach space. Indeed, what do we mean by $ \vec{v} \perp T^n \vec{v}$ when there is no inner product?

Before jumping into our definition of orthogonality, we  need to review a few necessary facts. See \cite{Car} for the details. For a complex Banach space $\mathscr{X}$ with norm $\|\cdot\|$, we say that $\mathscr{X}$ is {\em smooth} if given any $\vec{x} \in \mathscr{X} \setminus \{\vec{0}\}$ there is a {\em unique} $\ell \in \mathscr{X}^{*}$ (the norm dual space of $\mathscr{X}$) such that $\|\ell\| = 1$ and $\ell(\vec{x}) = \|\vec{x}\|$. Though not relevant to our discussion here, there is an equivalent definition of smoothness of a Banach space involving the G\^{a}teaux derivative of the norm. It is important to point out that the Hahn-Banach theorem yields the existence of a {\em norming functional} $\ell_{\vec{x}}$ for each $\vec{x} \in \mathscr{X}$. The {\em uniqueness} of the above norming functional for every $\vec{x} \in \mathscr{X} \setminus \{\mathbf{0}\}$ is what makes $\mathscr{X}$ smooth. Hilbert spaces are smooth, as are the Lebesgue spaces $L^{p}(X, \mu)$ when $p \in (1, \infty)$. The spaces $L^1(X, \mu)$ and $L^{\infty}(X, \mu)$ are not smooth. 

A Banach space $\mathscr{X}$ is {\em uniformly convex} if given $\epsilon > 0$, there is a $\delta = \delta(\epsilon) > 0$ such that 
$$\|\vec{x}\| \leq 1, \|\vec{y}\| \leq 1, \|\vec{x} - \vec{y}\| \geq \epsilon \implies \|\tfrac{1}{2}(\vec{x} + \vec{y})\| \leq 1 - \delta.$$
A Hilbert space is uniformly convex and Clarkson's inequalities imply that $L^{p}(X, \mu)$ is uniformly convex when $p \in (1, \infty)$ \cite[page 107]{Car}. A uniformly convex Banach space turns out to be reflexive. Important to this paper is the fact that uniformly convex spaces enjoy the {\em unique nearest point property} in that for a closed subspace (or more generally a closed convex set) $Y$ of $\mathscr{X}$ and a vector $\vec{x} \in \mathscr{X}$, there is a unique vector $\widehat{\vec{x}}\in Y$ for which 
\begin{equation}\label{bbbbbx}
\|\vec{x} - \widehat{\vec{x}}\| \leq \|\vec{x} - \vec{y}\|, \quad \vec{y} \in Y.
\end{equation}
This unique nearest point $\widehat{\vec{x}}$ is called the {\em metric projection} of $\vec{x}$ onto $Y$. When $\mathscr{X}$ is a Hilbert space, $\widehat{\vec{x}}$ turns out to be the orthogonal projection of $\vec{x}$ onto $Y$ and the mapping $\vec{x} \mapsto \widehat{\vec{x}}$ is linear. For a general Banach space, the mapping $\vec{x} \mapsto \widehat{\vec{x}}$ is not necessarily linear. 

We  now follow \cite{AMW, Jam}  and define what it means for vectors to be ``orthogonal'' in a Banach space.  For vectors $\mathbf{x}$ and $\mathbf{y}$ in a Banach space $\mathscr{X}$ we  say that $\mathbf{x}$ is {\em orthogonal to $\mathbf{y}$ in the Birkhoff-James sense} if
\begin{equation}\label{2837eiywufh[wpofjk}
      \|  \mathbf{x} + \beta \mathbf{y} \|_{\mathscr{X}} \geq \|\mathbf{x}\|_{\mathscr{X}}
\end{equation}
for all $\beta \in \C$. In this situation we write $\mathbf{x} \perp_{\mathscr{X}} \mathbf{y}$. A little exercise will show that if $\mathscr{X}$ is a Hilbert space, then $\mathbf{x} \perp \mathbf{y} \iff \mathbf{x} \perp_{\mathscr{X}} \mathbf{y}$. In this generality the relation $\perp_{\mathscr{X}}$ is generally neither symmetric nor linear in either argument. However, in a {\em smooth} Banach space, the relation $\perp_{\mathscr{X}}$ is linear in its second slot, meaning that 
$$\vec{x} \perp_{\mathscr{X}} \vec{y},\ \, \vec{x} \perp_{\mathscr{X}} \vec{z}, \implies \vec{x} \perp_{\mathscr{X}} (\alpha \vec{y} + \beta \vec{z}), \quad \alpha, \beta \in \C.$$
See \cite{Jam} for a proof of this. 

When $\mathscr{X}$ is a smooth Banach space and $\vec{x} \in \mathscr{X}$, we let $\ell_{\vec{x}} \in \mathscr{X}^{*}$ denote the unique norming functional for $\vec{x}$ (recall $\ell_{\vec{x}}(\vec{x}) = \|x\|$). By \cite[Cor.~4.2]{AMW}, we can state Birkhoff-James orthogonality equivalently as 
\begin{equation}\label{xxxkldfkdlk}
\vec{x} \perp_{\mathscr{X}} \vec{y} \iff \ell_{\vec{x}}(\vec{y}) = 0.
\end{equation}

Important to our discussion is the more tangible condition for Birkhoff-James orthogonality in $L^p(X, \mu)$ spaces (see \cite{Jam}): For $f, g \in L^p(X, \mu)$,
\begin{equation}\label{orutreee}
f \perp_{L^p(X, \mu)} g \iff \int_{X} |f|^{p - 2} \overline{f} g d \mu = 0.
\end{equation}
In the above integral, we interpret any instance of $|0|^{p - 2} 0$ to be zero. We have used Birkhoff-James orthogonality in several recent papers to discuss problems involving the $\ell^{p}_{A}$ spaces of analytic functions whose power series coefficients belong to the sequence space $\ell^{p}$. In \cite{MR3686895} we use this orthogonality to give some new bounds on the zeros of an analytic function while in \cite{zeroslp} we use this orthogonality, and the concept of an $\ell^{p}_{A}$-inner function, to describe the zeros sets of $\ell^{p}_{A}$. Still further, we use orthogonality in \cite{CR2} to give a factorization theorem for $\ell^{p}_{A}$ functions. Though perhaps not using explicitly, by name, the authors in \cite{MR1398090} use the above orthogonality to discuss zero sets, via extremal functions, for the $L^p$ Bergman spaces. 

With these preliminary remarks, we are ready to define a notion of inner elements. We make the following assumption for the rest of the paper:
$$\mbox{$\mathscr{X}$ is a uniformly convex, smooth, complex Banach space.}$$ For a bounded linear transformation $T: \mathscr{X} \to \mathscr{X}$ and a nonzero vector $\vec{x} \in \mathscr{X}$, we say that $\vec{x}$ is {\em $T$-inner} when 
$$\vec{x} \perp_{\mathscr{X}} T^n \vec{x}, \quad n \geq 1.$$
By the linearity of the relation $\perp_{\mathscr{X}}$ in the second slot (which follows from our assumptions on $\mathscr{X}$), we see that $\vec{x}$ is $T$-inner if and only if $\vec{x} \perp_{\mathscr{X}} [T \vec{x}]$, where, as a reminder, 
$$[T \vec{x}] = \bigvee\{T \vec{x}, T^2 \vec{x}, T^3 \vec{x}, \ldots\}.$$

If we let  $\widehat{\vec{x}}$ denote the metric projection (nearest point) of $\vec{x}$ onto the subspace $[T \vec{x}]$, equivalently, $\widehat{\mathbf{x}}$ is the unique vector satisfying 
$$\|\vec{x} - \widehat{\vec{x}}\| \leq \|\vec{x} - \vec{y}\|, \quad \vec{y} \in [T \vec{x}],$$
 the proof of Proposition \ref{66zczxc} yields the following. 
 
 \begin{Proposition}\label{v04rkjbfvj}
 If $T$ is a bounded linear transformation on $\mathscr{X}$ and $\vec{x} \in \mathscr{X}$, then 
 $\vec{x} - \widehat{\vec{x}}$ is $T$-inner (or zero) and every $T$-inner vector arises in this manner. 
 \end{Proposition}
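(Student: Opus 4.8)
The plan is to follow the proof of Proposition~\ref{66zczxc} essentially verbatim, making two substitutions: the orthogonal projection is replaced by the metric projection $\widehat{\vec{x}}$ of $\vec{x}$ onto $[T\vec{x}]$ (which exists and is unique because $\mathscr{X}$ is uniformly convex), and the Hilbert-space equivalence~\eqref{po09oifrgefd} is replaced by the very definition~\eqref{2837eiywufh[wpofjk} of Birkhoff--James orthogonality.

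First I would record that $[T\vec{x}]$ is a closed, $T$-invariant subspace: each generator $T^{k}\vec{x}$ with $k\geq1$ is carried by $T$ into $\bigvee\{T^{2}\vec{x},T^{3}\vec{x},\ldots\}\subseteq[T\vec{x}]$, and $T$ is bounded. For the ``$\Rightarrow$'' direction, the minimizing property of $\widehat{\vec{x}}$ shows that for every $\vec{z}\in[T\vec{x}]$ and every $\beta\in\C$ the vector $\widehat{\vec{x}}-\beta\vec{z}$ again lies in $[T\vec{x}]$, whence
\[
\|(\vec{x}-\widehat{\vec{x}})+\beta\vec{z}\|=\|\vec{x}-(\widehat{\vec{x}}-\beta\vec{z})\|\geq\|\vec{x}-\widehat{\vec{x}}\|,
\]
which is precisely $\vec{x}-\widehat{\vec{x}}\perp_{\mathscr{X}}\vec{z}$. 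Since $[T\vec{x}]$ is $T$-invariant and contains $\widehat{\vec{x}}$, the vector $T^{n}(\vec{x}-\widehat{\vec{x}})=T^{n}\vec{x}-T^{n}\widehat{\vec{x}}$ lies in $[T\vec{x}]$ for every $n\geq1$; taking $\vec{z}=T^{n}(\vec{x}-\widehat{\vec{x}})$ gives $\vec{x}-\widehat{\vec{x}}\perp_{\mathscr{X}}T^{n}(\vec{x}-\widehat{\vec{x}})$ for all $n\geq1$, so $\vec{x}-\widehat{\vec{x}}$ is $T$-inner (or is the zero vector, when $\vec{x}=\widehat{\vec{x}}$).

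For the converse, let $\vec{v}$ be $T$-inner, so $\vec{v}\perp_{\mathscr{X}}T^{n}\vec{v}$ for all $n\geq1$. Since $\mathscr{X}$ is smooth, $\perp_{\mathscr{X}}$ is linear in its second slot, so $\vec{v}\perp_{\mathscr{X}}\vec{z}$ for every $\vec{z}$ in the linear span of $\{T^{n}\vec{v}:n\geq1\}$; and since the inequality~\eqref{2837eiywufh[wpofjk} is preserved under norm limits in its second argument (by continuity of the norm), we get $\vec{v}\perp_{\mathscr{X}}\vec{z}$ for all $\vec{z}\in[T\vec{v}]$. Writing $\vec{z}=-\vec{y}$ this reads $\|\vec{v}-\vec{y}\|\geq\|\vec{v}\|$ for every $\vec{y}\in[T\vec{v}]$, so $\vec{0}$ is a nearest point of $\vec{v}$ in $[T\vec{v}]$; by the uniqueness of the metric projection in a uniformly convex space, $\widehat{\vec{v}}=\vec{0}$, and hence $\vec{v}=\vec{v}-\widehat{\vec{v}}$ has the asserted form with $\vec{x}=\vec{v}$.

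I do not anticipate a real obstacle: this is the Hilbert-space argument with the inner product deleted. The two places where a little care is needed---and where the standing hypotheses on $\mathscr{X}$ are genuinely invoked---are (i) the step $T^{n}(\vec{x}-\widehat{\vec{x}})\in[T\vec{x}]$, which rests on the $T$-invariance of $[T\vec{x}]$, and (ii) the passage from orthogonality to each generator $T^{n}\vec{v}$ to orthogonality to the whole closed subspace $[T\vec{v}]$, which uses smoothness (for additivity of $\perp_{\mathscr{X}}$ in the second slot) and continuity of the norm, together with uniform convexity to make the nearest-point characterization of $T$-innerness unambiguous.
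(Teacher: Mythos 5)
Your proposal is correct and is essentially the paper's own argument: the paper simply states that the proof of Proposition~\ref{66zczxc} carries over, and your write-up is precisely that adaptation, replacing the orthogonal projection by the metric projection and the Hilbert-space characterization \eqref{po09oifrgefd} by the Birkhoff--James definition \eqref{2837eiywufh[wpofjk}. You also correctly isolate where smoothness (linearity of $\perp_{\mathscr{X}}$ in the second slot) and uniform convexity (uniqueness of nearest points) are needed, which is exactly what makes the transfer work.
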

 
 Recall that if $T$ is a bounded linear transformation on $\mathscr{X}$, then the Banach space adjoint operator $T^{*}$, i.e., $(T^{*} \ell)(\mathbf{x}) = \ell(T \mathbf{x})$ for all $\mathbf{x} \in \mathscr{X}$ and $\ell \in \mathscr{X}^{*}$, is a bounded linear transformation on $\mathscr{X}^{*}$. 
 
 \begin{Proposition}\label{nnnnuuhuhuh}
Suppose that $T$ is a bounded linear transformation on $\mathscr{X}$. If $\vec{x} \in \mathscr{X}$  is $T$-inner, and $\ell_{\vec{x}}$ is the unique norming functional of $\vec{x}$, then $\ell_{\vec{x}}$ is $T^*$-inner in $\mathscr{X}^*$.
\end{Proposition}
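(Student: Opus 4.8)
The plan is to verify the Birkhoff--James orthogonality $\ell_{\vec{x}} \perp_{\mathscr{X}^{*}} (T^{*})^{n} \ell_{\vec{x}}$ for every $n \geq 1$ directly from the definition \eqref{2837eiywufh[wpofjk} applied in the dual space $\mathscr{X}^{*}$, rather than routing through the bidual and the metric-projection description of Proposition \ref{v04rkjbfvj}. The only ingredient needed is the trivial lower bound for the dual norm: for any $\phi \in \mathscr{X}^{*}$ one has $\|\phi\|_{\mathscr{X}^{*}} \geq |\phi(\vec{x})|/\|\vec{x}\|$, since $\vec{x}/\|\vec{x}\|$ has norm one in $\mathscr{X}$ (and, in fact, is a norming functional for $\ell_{\vec{x}}$ in $\mathscr{X}^{**}$, which is why the estimate below is sharp).

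First I would rewrite the hypothesis. By \eqref{xxxkldfkdlk}, $\vec{x}$ being $T$-inner means $\ell_{\vec{x}}(T^{n}\vec{x}) = 0$ for all $n \geq 1$; since the Banach-space adjoint satisfies $(T^{*})^{n} = (T^{n})^{*}$, this says $\big((T^{*})^{n}\ell_{\vec{x}}\big)(\vec{x}) = \ell_{\vec{x}}(T^{n}\vec{x}) = 0$ for all $n \geq 1$. Recall also that $\ell_{\vec{x}}(\vec{x}) = \|\vec{x}\|$ and $\|\ell_{\vec{x}}\|_{\mathscr{X}^{*}} = 1$. Now fix $n \geq 1$ and $\beta \in \C$, and evaluate the functional $\ell_{\vec{x}} + \beta (T^{*})^{n}\ell_{\vec{x}}$ at $\vec{x}$: it equals $\ell_{\vec{x}}(\vec{x}) + \beta \big((T^{*})^{n}\ell_{\vec{x}}\big)(\vec{x}) = \|\vec{x}\|$. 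Combining this with the lower bound from the previous paragraph,
\[
   \big\|\ell_{\vec{x}} + \beta (T^{*})^{n}\ell_{\vec{x}}\big\|_{\mathscr{X}^{*}} \;\geq\; \frac{\big|\big(\ell_{\vec{x}} + \beta (T^{*})^{n}\ell_{\vec{x}}\big)(\vec{x})\big|}{\|\vec{x}\|} \;=\; 1 \;=\; \|\ell_{\vec{x}}\|_{\mathscr{X}^{*}}.
\]
As $\beta \in \C$ was arbitrary, \eqref{2837eiywufh[wpofjk} (read in $\mathscr{X}^{*}$) gives $\ell_{\vec{x}} \perp_{\mathscr{X}^{*}} (T^{*})^{n}\ell_{\vec{x}}$; since $n \geq 1$ was arbitrary and $\ell_{\vec{x}} \neq \vec{0}$, the functional $\ell_{\vec{x}}$ is $T^{*}$-inner.

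I do not expect a genuine obstacle: the argument is essentially a one-line computation once one resists the temptation to invoke reflexivity, the linearity of $\perp_{\mathscr{X}^{*}}$, or uniqueness of the norming functional of $\ell_{\vec{x}}$ in $\mathscr{X}^{**}$ -- none of which is needed for this direction. The two small points that warrant care are the identity $(T^{*})^{n} = (T^{n})^{*}$, so that the $T$-innerness of $\vec{x}$ transfers cleanly to the action of $(T^{*})^{n}\ell_{\vec{x}}$ on $\vec{x}$, and the observation that testing the dual norm against the single vector $\vec{x}$ already suffices.
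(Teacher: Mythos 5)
Your proof is correct and takes essentially the same route as the paper's: both arguments rest on the two facts that $\bigl((T^{*})^{n}\ell_{\vec{x}}\bigr)(\vec{x}) = \ell_{\vec{x}}(T^{n}\vec{x}) = 0$ for $n \geq 1$ and that $\vec{x}/\|\vec{x}\|$ norms $\ell_{\vec{x}}$. The only (mild) difference is that where the paper invokes reflexivity and smoothness of $\mathscr{X}^{*}$ in order to apply the criterion \eqref{xxxkldfkdlk} in the dual space, you verify the defining norm inequality for Birkhoff--James orthogonality directly by testing $\ell_{\vec{x}} + \beta (T^{*})^{n}\ell_{\vec{x}}$ against the single vector $\vec{x}$, which, as you note, needs none of those extra hypotheses.
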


\begin{proof}
The assumption of uniform smoothness implies that each nonzero element of $\mathscr{X}$ has a unique norming functional.  The hypotheses further imply that $\mathscr{X}$ is reflexive and that $\mathscr{X}^*$ is strictly convex and smooth \cite{Car}.  Therefore we may speak of unique norming functionals for both $\mathscr{X}$ and $\mathscr{X}^*$.

Suppose that $\|\vec{x}\|=1$, and 
\[
\vec{x} \perp  T^k \vec{x}, \quad k \geq 1.
\]
This implies that 
$\ell_{\vec{x}}(T^k \vec{x}) = 0$ and 
${T^*}^k \ell_{\vec{x}}(\vec{x}) = 0$
for all $k\geq 1$.  Note that $\vec{x}/\|\vec{x}\|$ can be viewed as the norming functional for $\ell_{\vec{x}}$, since it has norm $1$ 
and
\[
\ell_{\vec{x}}(\vec{x}/\|\vec{x}\|) = \frac{1}{\|\vec{x}\|} \|\vec{x}\| = 1 = \|\ell_{\vec{x}}\|.
\]
It follows that
$\ell_{\vec{x}} \perp {T^*}^k \ell_{\vec{x}}$
for all $k \geq 1$. This says that the vector $\ell_{\vec{x}} \in \mathscr{X}^{*}$ is $T^*$-inner. 
\end{proof}

\begin{Example}
For the Hardy spaces $H^p$, $1<p<\infty$, which we can regard as closed subspaces of $L^p(d \theta/2 \pi)$, we can use \eqref{orutreee} to see that 
$$f \perp_{H^p} g \iff \int_{0}^{2 \pi} |f(e^{i \theta})|^{p - 2} \overline{f(e^{i \theta})} g(e^{i \theta}) \frac{d \theta}{2 \pi} = 0.$$ If, as in Example \ref{nnnnnc}, $(T f)(z) = z f(z)$ is the unilateral shift on $H^p$, then $f$ is $T$-inner precisely when 
$$f \perp_{H^p}  z^n f \iff \int_{0}^{2 \pi} |f(e^{i \theta})|^{p} e^{i n \theta} \frac{d \theta}{2 \pi} = 0, \quad n \geq 1.$$ Again, this shows that $f$ has constant modulus on the circle, i.e., inner in the classical sense. 
\end{Example}

\begin{Example}
For the  Bergman spaces $\mathscr{A}^p$ of analytic functions $f$ on $\D$ for which 
$$\int_{\D} |f(z)|^p dA< \infty$$ 
(which is a closed subspace of $L^p(\D, dA)$), we can use \eqref{orutreee} to extend Example  \ref{popoweripeo} and say that $f \in \mathscr{A}^p$ is $T$-inner ($T f = z f$) when 
$$\int_{\D} |f(z)|^p z^{n} d A = 0, \quad n \geq 1.$$
\end{Example}

\begin{Example}
For the space 
$$\ell^{p}_{A} = \left\{f(z) = \sum_{k \geq 0} a_k z^k: \sum_{k \geq 0} |a_k|^{p} < \infty\right\},$$
which turns out to be a well-studied space Banach space of analytic functions on $\D$ (see \cite{MR3714456} for a survey),
the Birkhoff-James orthogonality becomes 
$$f \perp_{\ell^{p}_{A}} g \iff \sum_{k \geq 0} |a_{k}|^{p - 2} \overline{a_k} b_k = 0.$$
The unilateral shift $(T f)(z) = z f$ is an isometry on $\ell^{p}_{A}$ and the notion of $T$-inner was studied in \cite{zeroslp}. The condition for $f \in \ell^{p}_{A}$ to be $T$-inner is 
$$\sum_{k \geq 0} |a_{k}|^{p - 2} \overline{a_k} a_{N + k} = 0, \quad N \geq 1,$$ but this condition can be difficult to work with. One can see functions such as $f(z) = z^{n}$ are inner. When $w \in \D \setminus \{0\}$ an analysis in \cite{CR2} shows that 
$$f(z) =  \frac{1 - z/w}{1 - |w|^{p - 2} \overline{w} z}$$ is inner. Notice how when $p = 2$ this function becomes a constant times the single Blaschke factor
$$\frac{z - w}{1 - \overline{w} z}.$$
\end{Example}

%%%%%%%%%%%%%%%%%%%%%%%%%%%%%%%%%%%%%%%%%%%%%%%%%%%%%%%%%%%%%%%%%%%%%%%%
%%%%%%%%%%%%%%%%%%%%%%%%%%%%%%%%%%%%%%%%%%%%%%%%%%%%%%%%%%%%%%%%%%%%%%%%
%%%%%%%%%%%%%%%%%%%%%%%%%%%%%%%%%%%%%%%%%%%%%%%%%%%%%%%%%%%%%%%%%%%%%%%%

\section{Application: Zero sets for Banach spaces of analytic functions}
In this section we develop the analog of Theorem \ref{v98u24tprefewq23re} for Banach spaces of analytic functions. 
Let $\mathscr{X}$ be a uniformly convex, smooth, complex Banach space of analytic functions on a domain $\Omega$ that satisfies the following conditions.
\begin{equation}\label{X1}
\mbox{Point evaluation of derivatives of any order is bounded};
\end{equation}
\begin{equation}\label{X2}
f \in \mathscr{X} \implies z f(z) \in \mathscr{X};
\end{equation}
\begin{equation}\label{X3}
\bigvee\{z^j: j \geq 0\} =  \mathscr{X};
\end{equation}
\begin{equation}\label{X4}
w \in \Omega, f \in \mathscr{X} \implies \frac{f(z) - f(w)}{z - w} \in \mathscr{X}
\end{equation}
For some positive constants $r$ and $K$, 
\begin{equation}\label{X5} 
 f \perp_{\mathscr{X}} g  \ \implies  \|f+g\|^r \geq \|f\|^r + K\|g\|^r.
\end{equation}

Just as in the Hilbert space case, condition (\ref{X1}) can be deduced from conditions (\ref{X2}) -- (\ref{X4}). Furthermore, conditions \eqref{X1}, \eqref{X2}, and the closed graph theorem show that the operator 
$$S_{\mathscr{X}}: \mathscr{X} \to \mathscr{X}, \quad (S_{\mathscr{X}} f)(z) = z f(z),$$
is a bounded linear operator on $\mathscr{X}$. 

Note that $\mathscr{X}$ is reflexive and enjoys the unique nearest point property in the sense of \eqref{bbbbbx}. Furthermore, each nonzero vector $f \in \mathscr{X}$ has a unique norming functional, $\ell_f$ from which it follows from our general discussion in the previous section that $f \perp_{\mathscr{X}} g$ if and only if
$
   \ell_f(g) = 0.
$
Since evaluation at each point $w \in \Omega$ is continuous, it is given by a functional $k_{w} \in \mathscr{X}^{*}$, i.e., 
$$f(w) = k_{w}(f).$$
Unlike the Hilbert space case discussed earlier, where $k_{\lambda}$ belonged to the Hilbert space (equating Hilbert space with its dual space in the natural way via the Riesz representation theorem), here $k_{w}$ belongs to the dual space $\mathscr{X}^{*}$ which is not necessarily a space of analytic functions (and for which we don't use the notation $k_{w}(z)$ as we did for the Hilbert space case). 

Condition \eqref{X5} is a ``Pythagorean inequality,'' and it was shown in \cite{CR} that all $L^p$ spaces with $p \in (1, \infty)$ satisfy this condition for a range of parameter values $r$ and $K$. Furthermore, the inequality holds in reverse for other values of $r$ and $K$.

Important to the development of the analog of Theorem \ref{v98u24tprefewq23re} for Banach spaces is the following projection lemma, which makes use of the Pythagorean inequality from \eqref{X5}. 

\begin{Lemma}\label{43235t349823refbdsas}
Let $\mathscr{X}$ be a smooth Banach space satisfying (\ref{X5}).  For each $n \in \mathbb{N}$, suppose that $\mathscr{X}_n$ is a subspace of $\mathscr{X}$, such that
     \[
          \mathscr{X}_1 \subseteq \mathscr{X}_2 \subseteq \mathscr{X}_3 \subseteq \cdots.
     \]
     Define $\mathscr{X}_{\infty} = \overline{\bigcup_{n=1}^{\infty} \mathscr{X}_n}$.   If $P_n$ is the metric projection mapping from $\mathscr{X}$ to $\mathscr{X}_n$, for all $n \in \mathbb{N} \cup \{\infty\}$, then for any $\vec{x} \in \mathscr{X}$, $P_n \vec{x}$ converges to $P_{\infty} \vec{x}$ in norm.
\end{Lemma}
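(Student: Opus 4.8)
The plan is to reduce the statement to two ingredients: the elementary characterization of a metric projection onto a subspace via Birkhoff--James orthogonality, and the Pythagorean inequality \eqref{X5}, which upgrades the nesting of the $\mathscr{X}_n$ into a telescoping estimate making $(P_n\vec{x})_n$ a Cauchy sequence.

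First I would record that for any (linear) subspace $Y\subseteq\mathscr{X}$ and any $\vec{x}\in\mathscr{X}$, the best approximation $P_Y\vec{x}$ satisfies $\vec{x}-P_Y\vec{x}\perp_{\mathscr{X}}\vec{z}$ for every $\vec{z}\in Y$: for fixed $\vec{z}\in Y$ and all $\beta\in\C$ we have $P_Y\vec{x}-\beta\vec{z}\in Y$, hence $\|\vec{x}-P_Y\vec{x}\|\le\|\vec{x}-(P_Y\vec{x}-\beta\vec{z})\|=\|(\vec{x}-P_Y\vec{x})+\beta\vec{z}\|$. In particular $\vec{x}-P_m\vec{x}\perp_{\mathscr{X}}\mathscr{X}_m$ for every $m\in\mathbb{N}$.

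Next, put $d_n:=\|\vec{x}-P_n\vec{x}\|=\operatorname{dist}(\vec{x},\mathscr{X}_n)$ for $n\in\mathbb{N}\cup\{\infty\}$. Since $\mathscr{X}_n\subseteq\mathscr{X}_{n+1}\subseteq\mathscr{X}_\infty$, the $d_n$ are non-increasing and bounded below by $d_\infty$; and since $\bigcup_n\mathscr{X}_n$ is dense in $\mathscr{X}_\infty$, approximating $P_\infty\vec{x}$ by elements of $\bigcup_n\mathscr{X}_n$ shows $\inf_n d_n=d_\infty$, so $d_n\downarrow d_\infty$. For $m\ge n$ one has $P_m\vec{x}-P_n\vec{x}\in\mathscr{X}_m$, hence $\vec{x}-P_m\vec{x}\perp_{\mathscr{X}}(P_m\vec{x}-P_n\vec{x})$, and \eqref{X5} applied to $f=\vec{x}-P_m\vec{x}$, $g=P_m\vec{x}-P_n\vec{x}$ gives
\[
   d_n^r=\big\|(\vec{x}-P_m\vec{x})+(P_m\vec{x}-P_n\vec{x})\big\|^r\ \ge\ d_m^r+K\,\|P_m\vec{x}-P_n\vec{x}\|^r,
\]
so $\|P_m\vec{x}-P_n\vec{x}\|^r\le K^{-1}(d_n^r-d_m^r)$. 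As the monotone bounded sequence $(d_n^r)$ is Cauchy, $(P_n\vec{x})_n$ is Cauchy; let $\vec{y}$ be its limit. Then $\vec{y}\in\mathscr{X}_\infty$ (each $P_n\vec{x}\in\mathscr{X}_n\subseteq\mathscr{X}_\infty$ and $\mathscr{X}_\infty$ is closed) and $\|\vec{x}-\vec{y}\|=\lim_n d_n=d_\infty=\operatorname{dist}(\vec{x},\mathscr{X}_\infty)$, so by uniqueness of the metric projection onto $\mathscr{X}_\infty$ we conclude $\vec{y}=P_\infty\vec{x}$, which is the assertion.

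I do not anticipate a serious obstacle; the one point to get right is to apply \eqref{X5} to the increments $\vec{x}-P_m\vec{x}$ and $P_m\vec{x}-P_n\vec{x}$ (so that the orthogonality needed is immediate from the projection characterization), which yields the Cauchy estimate in a single step rather than routing through $\mathscr{X}_\infty$. Note that smoothness is not actually invoked in this argument: the orthogonality characterization used holds in any normed space, \eqref{X5} is a hypothesis, and the existence and uniqueness of all the metric projections $P_n$, $n\in\mathbb{N}\cup\{\infty\}$, are part of the assumptions of the lemma.
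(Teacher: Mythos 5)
Your proof is correct and follows essentially the same route as the paper's: monotonicity of the distances $\|\vec{x}-P_n\vec{x}\|$, Birkhoff--James orthogonality of the co-projection to the larger subspace, the Pythagorean inequality \eqref{X5} to make $(P_n\vec{x})$ Cauchy, and uniqueness of nearest points to identify the limit with $P_\infty\vec{x}$. The only cosmetic difference is that you establish $d_n\downarrow\operatorname{dist}(\vec{x},\mathscr{X}_\infty)$ up front via density, where the paper defers that to a closing $\epsilon$-argument; your observation that smoothness is not actually needed (uniform convexity and \eqref{X5} suffice) is also accurate.
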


\begin{proof}
By hypothesis, $\mathscr{X}$ is uniformly convex (and hence has unique nearest points), and satisfies the Pythagorean inequality
\[
    \| \vec{x} + \vec{y} \|^r \geq \|\vec{x}\|^r + K \|\vec{y}\|^r
\]
whenever $\vec{x} \perp_{\mathscr{X}} \vec{y}$.   Let $\vec{x} \in \mathscr{X}$.  By the definition of metric projection, whenever $m<n$, we have
\begin{align*}
    \| \vec{x} - P_m \vec{x} \| &=  \inf\{ \|\vec{x}-\vec{z}\| : \vec{z} \in \mathscr{X}_m\} \\
       &\geq \inf\{ \|\vec{x}-\vec{z}\| : z \in \mathscr{X}_n\}\\
       &= \|\vec{x} - P_n \vec{x}\|\\
       &\geq \|\vec{x} - P_{\infty} \vec{x}\|.
\end{align*}
Thus, as a sequence indexed by $n$, $\|\vec{x} - P_n \vec{x}\|$ is monotone nonincreasing, and bounded below.  Accordingly, it converges.  

Next, for $m<n$, the vector $P_n \vec{x} - P_m \vec{x}$ lies in $\mathscr{X}_n$ (the larger space), and hence the co-projection $\vec{x} - P_n \vec{x}$ is Birkhoff-James orthogonal to it.
Consequently,
the Pythagorean inequality says that
\[
      \| \vec{x} - P_m \vec{x} \|^r \geq \|\vec{x} - P_n \vec{x} \|^r + K\|P_n \vec{x} - P_m \vec{x}\|^r.
\]
Since the (positive) difference $\| \vec{x} - P_m \vec{x} \|^r  - \|\vec{x} - P_n \vec{x} \|^r$ can be made arbitrarily small by choosing $m$ sufficiently large, it follows that $\{P_m \vec{x}\}_{m\geq 1}$ is a Cauchy sequence in norm, and converges to some vector $\vec{z}$.  It is clear that $\vec{z} \in \mathscr{X}_{\infty}$, and hence 
\[
      \| \vec{x}-\vec{z} \|  \geq \| \vec{x} - P_{\infty}\vec{x}\|.
\]

Next, let $\epsilon >0$.  There exists an $N$ such that 
\[
         \|\vec{x} -  \vec{y} \|   \leq  \| \vec{x} - P_{\infty}\vec{x}\| + \epsilon
\]
for some $\vec{y} \in \mathscr{X}_N$.  But then
\[
       \|\vec{x}-\vec{z} \| \leq \|\vec{x} - P_n \vec{x}\| \leq \|\vec{x}-\vec{y}\| \leq  \| \vec{x} - P_{\infty}\vec{x}\| + \epsilon.
\]
Since this is true for arbitrary $\epsilon$, we conclude that \[
      \| \vec{x}-\vec{z} \|  \leq \| \vec{x} - P_{\infty}\vec{x}\|.
\]  Equality holds in these norms, so finally uniqueness of nearest points forces $\vec{z} = P_{\infty}\vec{x}$.
\end{proof}

With the above set up we are now ready to develop a version of Theorem \ref{v98u24tprefewq23re} for Banach spaces satisfying the conditions \eqref{X1} - \eqref{X5}.
Fix an infinite sequence $W = (w_1, w_2, w_3,\ldots) \subset \Omega \setminus \{0\}$, and for each $n\geq 1$, define
$$
     f_n(z) =  \Big( 1-\frac{z}{w_1}  \Big)\cdots\Big( 1-\frac{z}{w_n}  \Big)$$
and, by Proposition \ref{v04rkjbfvj}, the $S_{\mathscr{X}}$-inner function 
$$     J_n =  f_n - \widehat{f}_n,
$$
where $\widehat{f}$ stands for the metric projection of $f$ onto $[zf]$.  Note that $\widehat{f}$ exists and is unique, by uniform convexity. (When $\mathscr{X}$ is a Hilbert space, the metric projection coincides with the orthogonal projection.)

Let $k_j \in \mathscr{X}^*$ denote the evaluation functional at $w_j$, $j \geq 1$ and $k_0$ denote the evaluation functional at the origin.  The analogous argument used to prove Lemma \ref{sldkfjsdfeq} shows that 
\[
       \{ f \in \mathscr{X}:  f(0) = f(w_j) = 0, 1 \leq j \leq n\}  = [z f_n].
\]

Next, suppose that $\lambda = \ell_{J_n} \in \mathscr{X}^*$ is the norming functional for $J_n$.  From
\[
      J_n \perp _{\mathscr{X}}  z^j f_n, \quad 1\leq j \leq n
\]
and \eqref{xxxkldfkdlk} we see that 
\[
      \lambda(z^j f_n) = 0,\quad 1\leq j \leq n.
\]
That is, $\lambda \in [z f_n]^{\perp} = \bigvee\{k_j : 0 \leq j \leq n \}$.  We may therefore express $\lambda$ as
\[
      \lambda = c_0 k_0 + c_1 k_1 + \cdots + c_n k_n
\]
for some complex coefficients $c_0$, $c_1$,\ldots, $c_n$.   By definition of norming functional this says that
\begin{align*}
    \|J_n\| &=   \lambda(J_n) \\
              &=  c_0 k_0(J_n) + c_1 k_1(J_n) + \cdots+ c_n k_n(J_n)\\
              &= c_0 \cdot 1 + 0 + \cdots + 0\\
              &= c_0,
\end{align*}
since $k_0(J_n) = J_n(0) = f_n(0) =1$.

Finally, the condition
\[
      k_j(J_n) = 0, \ 1 \leq j \leq n
\]
can be interpreted as saying that
\[
      \lambda \perp_{\mathscr{X}^*}  k_j,\ 1 \leq j \leq n.
\]
That is, $\lambda$ solves the infimum problem
\[
      \inf \| c_0 k_0 + c'_1 k_1 + \cdots + c'_n k_n \|
\]
where $c_0 = \|J_n\|$ is fixed, and $c'_1,\ldots,c'_n$ are varied.

By renaming the constants, we have shown that 
\[
      1 = \|\lambda\| = \|J_n\| \inf \| k_0 + b_1 k_1 + \cdots + b_n k_n \|,
\]
or
\[
         \|J_n\|  = \Big[  \inf \| k_0 + b_1 k_1 + \cdots + b_n k_n \|  \Big]^{-1}.
\]
As $n$ tends to infinity, the infimum is over a larger set, and thus decreases monotonically, while $\|J_n\|$ must therefore be nondecreasing monotonically.  

Suppose $W$ is the zero set of some nontrivial function $f \in \mathscr{X}$.  By dividing by $z$ a suitable number of times, we can assume that $f(0) \neq 0$.  Then 
\begin{align*}
\| k_0 + b_1 k_1 + \cdots + b_n k_n \| &  \geq | \langle k_0 + b_1 k_1 + \cdots + b_n k_n, f\rangle | /\|f\|\\
& = |f(0)|/\|f\|
\end{align*}
is bounded from zero, and consequently $\|J_n\|$ is bounded above.

Conversely, if $W$ fails to be the zero set of some nontrivial function of $\mathscr{X}$, then by the Lemma \ref{43235t349823refbdsas}, there exists an element $\Lambda$ of $\mathscr{X}^*$ such that the following infimum is attained:
$$
      \|\Lambda\| =  \inf\{ \| k_0 + b_1 k_1 + \cdots + b_n k_n \|:  b_1,\ldots, b_n \in \mathbb{C}, n\geq 1\}.
      $$ Indeed, Lemma \ref{43235t349823refbdsas} tells us that $\Lambda$ is the $k_0$ minus its metric projection onto 
      $$\bigvee\{k_1, k_2, k_3,\ldots\}.$$

Let $\Phi \in \mathscr{X}$ be the norming functional of $\Lambda$.  Then the infimum condition assures that
\[
   \Lambda \perp_{\mathscr{X}^*}  k_j, \ 1\leq j;
\]
that is,
\[
       k_j(\Phi) = 0
\]
for all $j \geq 1$.  This shows that $W$ is a zero set for $\Phi$.  The only way this can happen is if $\Phi$ is identically zero, which implies that 
\[
    \lim_{n\rightarrow\infty} \inf \| k_0 + b_1 k_1 + \cdots + b_n k_n \|  = 0.
\]
We memorialize these findings as follows, obtaining an extension of Theorem \ref{v98u24tprefewq23re} to certain Banach spaces of analytic functions.
\begin{Theorem}\label{v98u24tprefewq23re2}
Let $\mathscr{X}$ be a uniformly convex, smooth, complex Banach space of analytic functions on a domain $\Omega$ satisfying conditions (\ref{X1}) -- (\ref{X5}).
Let $(w_j)_{j \geq 1} \subset \Omega \setminus \{0\}$ and 
$$f_n = \prod_{j = 1}^{n} \Big(1 - \frac{z}{w_j}\Big), \quad J_{n} = f_{n} - P_{[z f_n]} f_n.$$
Then 
\begin{enumerate}
\item Each $J_n$ is an $S_{\mathscr{X}}$-inner function;
\item the sequence $\|J_n\|$ is a non-decreasing sequence; 
\item $(w_j)_{j \geq 1}$ is a zero sequence for $\mathscr{X}$ if and only if 
$$\sup\{\|J_n\|: n \geq 1\} < \infty.$$
\end{enumerate}
\end{Theorem}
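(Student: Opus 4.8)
The plan is to assemble the ingredients prepared in the discussion immediately preceding the statement, together with Proposition~\ref{v04rkjbfvj} and Lemma~\ref{43235t349823refbdsas}; beyond that, the only genuinely new work is to record the Banach-space versions of the two ``zero-set'' lemmas used earlier in the Hilbert setting.

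Part (1) is immediate: $J_n=f_n-\widehat f_n$ is by definition $f_n$ minus its metric projection onto $[S_{\mathscr{X}}f_n]=[zf_n]$, so Proposition~\ref{v04rkjbfvj} says $J_n$ is $S_{\mathscr{X}}$-inner, and $J_n$ is nonzero because $J_n(0)=f_n(0)=1$, every element of $[zf_n]$ vanishing at the origin. The supporting fact $[zf_n]=\{f\in\mathscr{X}:f(0)=f(w_1)=\cdots=f(w_n)=0\}$ is the Banach analogue of Lemmas~\ref{oirutoriug} and~\ref{sldkfjsdfeq}: the argument for Lemma~\ref{oirutoriug} uses only \eqref{X2}, \eqref{X3}, \eqref{X4} and continuity of $S_{\mathscr{X}}$ (hence of $p(S_{\mathscr{X}})$), so it carries over verbatim, with the evaluation functionals $k_j\in\mathscr{X}^*$ replacing the reproducing kernels.

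For (2) and (3) I would first establish the dual identity
\[
   \|J_n\|\;=\;\Big[\,\inf\{\,\|k_0+b_1k_1+\cdots+b_nk_n\|:b_1,\dots,b_n\in\C\,\}\,\Big]^{-1},
\]
exactly as in the pre-statement computation: writing $\lambda=\ell_{J_n}$ for the norming functional of $J_n$, the inner property $J_n\perp_{\mathscr{X}}[zf_n]$ together with \eqref{xxxkldfkdlk} forces $\lambda$ to annihilate $[zf_n]=\bigcap_{j=0}^{n}\ker k_j$, hence $\lambda=c_0k_0+\cdots+c_nk_n$ with $c_0=\lambda(J_n)=\|J_n\|$; then $|c_0f(0)|=|\lambda(f)|\le1$ for every unit $f$ vanishing at $w_1,\dots,w_n$, while $J_n/\|J_n\|$ is such an $f$ attaining $1/\|J_n\|$ at the origin, which pins the infimum down to $1/\|J_n\|$. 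Part (2) is then immediate, since enlarging $n$ shrinks the feasible set and so decreases that infimum, forcing $\|J_n\|$ to be nondecreasing. For the ``if $W$ is a zero set'' half of (3): pick nontrivial $g\in\mathscr{X}$ vanishing on $W$, divide by $z$ finitely often via \eqref{X4} to get $g(0)\ne0$ with $g(w_j)=0$ still, and note $\|k_0+b_1k_1+\cdots+b_nk_n\|\ge|g(0)|/\|g\|>0$, so $\|J_n\|\le\|g\|/|g(0)|$ for all $n$. For the converse, assume $W$ is not a zero set and apply Lemma~\ref{43235t349823refbdsas} in $\mathscr{X}^*$ to the chain $\mathscr{X}_n=\mathrm{span}\{k_1,\dots,k_n\}$ with $\mathscr{X}_\infty=\bigvee\{k_1,k_2,\dots\}$; then $\Lambda:=k_0-P_\infty k_0$ has norm $\lim_n\inf_b\|k_0+b_1k_1+\cdots+b_nk_n\|=\lim_n 1/\|J_n\|$, and, being a co-projection, $\Lambda\perp_{\mathscr{X}^*}k_j$ for all $j\ge1$. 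If $\Lambda\ne0$ its norming functional $\Phi\in\mathscr{X}=\mathscr{X}^{**}$ satisfies $k_j(\Phi)=0$, i.e. $\Phi(w_j)=0$, for all $j$, so $\Phi\equiv0$ (as $W$ is not a zero set), contradicting $\|\Phi\|=1$; hence $\Lambda=0$ and $\|J_n\|\to\infty$.

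The step I expect to be the main obstacle is making Lemma~\ref{43235t349823refbdsas} legitimately applicable to $\mathscr{X}^*$. Uniform convexity of $\mathscr{X}$ yields that $\mathscr{X}^*$ is reflexive, strictly convex, and smooth, which secures existence and uniqueness of all the metric projections involved; but the proof of Lemma~\ref{43235t349823refbdsas} also leans on the Pythagorean inequality \eqref{X5}, so one must check that a companion inequality holds in $\mathscr{X}^*$ --- true for $\mathscr{X}=L^p$, $1<p<\infty$, by \cite{CR} --- or else adopt it as an additional standing hypothesis. The remaining points (the Banach-space zero-set lemmas and the divide-by-$z$ reduction) are routine consequences of \eqref{X2}--\eqref{X4}.
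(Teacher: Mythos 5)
Your proposal follows essentially the same route as the paper: part (1) via Proposition~\ref{v04rkjbfvj}, the dual identity $\|J_n\| = \bigl[\inf \|k_0 + b_1k_1+\cdots+b_nk_n\|\bigr]^{-1}$ driving parts (2) and (3), and Lemma~\ref{43235t349823refbdsas} applied in $\mathscr{X}^*$ for the converse half of (3). The obstacle you flag --- that Lemma~\ref{43235t349823refbdsas} is being invoked in $\mathscr{X}^*$, so one needs uniform convexity and a Pythagorean inequality there rather than merely in $\mathscr{X}$ --- is a genuine point, but the paper's own proof passes over it silently, so your treatment is no less complete than the original.
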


Spaces for which this applies (i.e., they satisfy the conditions of the abstract Banach space along with the Pythagorean inequality) include the $L^p$ Bergman spaces and $\ell^{p}_{A}$ spaces. A proof specifically tailored for $\ell^{p}_{A}$ was developed in \cite{zeroslp}.

%%%%%%%%%%%%%%%%%%%%%%%%%%%%%%%%%%%%%%%%%%%%%%%%%%%%%%%%%%%%%%%%%%%%%%%%
%%%%%%%%%%%%%%%%%%%%%%%%%%%%%%%%%%%%%%%%%%%%%%%%%%%%%%%%%%%%%%%%%%%%%%%%
%%%%%%%%%%%%%%%%%%%%%%%%%%%%%%%%%%%%%%%%%%%%%%%%%%%%%%%%%%%%%%%%%%%%%%%%

\bibliographystyle{plain}
\bibliography{references6}
\end{document}